\newtheorem{lemma}{Lemma}
\newtheorem{example}{Example}
\newtheorem{theorem}{Theorem}
\newtheorem{proposition}{Proposition}
\newtheorem{conjecture}{Conjecture}
\newtheorem{problem}{Problem}
\newtheorem{assumption}{Assumption}
\theoremstyle{remark}
 \numberwithin{equation}{section}
 \newcommand{\Real}{\mathbb{R}}
  \newcommand{\Integer}{\mathbb{Z}}
\title{Optimal regulation in a periodic environment: insights from a simple model}
\author{Nir Gavish\footnote{Technion Israel Institute of Technology, Faculty of Mathematics, Technion City, Haifa 3200003, Israel, ({\tt ngavish@technion.ac.il})} 
\and Guy Katriel\footnote{Department of Applied Mathematics, Braude College of Engineering, Karmiel,  2161002, Israel, ({\tt katriel@braude.ac.il})}}
\date{}
\begin{document}
\maketitle
\begin{abstract}
We perform a detailed study of 
a simple mathematical model addressing the problem of optimally regulating a process subject to periodic external forcing, which is interesting both in view of its direct applications and as a prototype for more general problems. In this model one must determine an optimal time-periodic `effort' profile, and the natural setting for the problem is in a space of periodic non-negative measures. We prove that there exists a unique solution for the problem in the space of measures, and then turn to characterizing this solution.
Under some regularity conditions on the problem's data, we prove that its solution is an absolutely continuous measure, and provide an explicit formula for the measure's density. On the other hand, when the problem's data is discontinuous, the solution measure can also include atomic components, representing a concentrated effort made at specific time points. Complementing our analytical results, we carry out numerical computations to obtain solutions of the problem in various instances, which enable us to examine the interesting ways in which the solution's structure varies as the problem's data is varied.
\end{abstract}

\section{Introduction}

What is the optimal way to allocate effort and resources to essential activities when living in a fluctuating environment, in which costs and/or rewards for activities vary periodically in time?
Biological organisms are faced with this problem, as they are subject to daily (circadian) and seasonal environmental variation. 
Biological clocks, that is, internal periodic mechanisms that allow organisms to track time,
enable to predict regular environmental variation, and accordingly regulate activities and processes such as sleep-wake cycles, metabolism, body temperature, immune function, reproduction, foraging, and other behavioral patterns \cite{Forger2024biological,Kreitzman,Kumar,Perreau}.
However, even under the idealized assumption that
environmental variation is strictly periodic and known,
determining the optimal regulation policy is a
highly nontrivial problem.

In this work, we aim to enhance our understanding
of the question of optimal regulation in a periodic environment by performing a detailed study of a minimal model, which allows us to explore some of the mathematical patterns which emerge in such problems. The model, which consists of a linear scalar differential equation, lends itself to a variety of natural interpretations. Its simplicity allows us to make considerable progress in characterizing the optimal solutions. However, despite this simplicity, we will see that it gives rise to intricate and subtle phenomena and that
its detailed understanding requires
delicate mathematical analysis.

\subsection{Introducing the problem}
\label{sec:intrducing}

We present the problem of study with one illustrative example of an optimal clearing problem. It is important to mention that the mathematical problem is applicable to numerous scenarios, as discussed in more detail in the Discussion section.
Assume a pollutant is released into an environment at time-dependent rate 
$c(t)\geq 0$ (mass per unit time), which will be assumed to be given. The pollutant is removed or degraded as a first-order process with a base rate $\delta>0$ (per unit time), which can be increased by expending an additional time-dependent clearing effort $\eta(t)\geq 0$, which will be assumed to be under our control. The amount of pollutant $S(t)$ in the environment is then given by the linear differential equation
\begin{equation}\label{eq:s0}S'(t)=c(t) -[\eta(t)+\delta ]S(t).\end{equation}
We will assume that both the pollutant inflow rate $c(t)$ and the clearing effort $\eta(t)$ are 
$T$-periodic functions ($T>0$). It is a standard fact that \eqref{eq:s0} has a unique periodic solution
$S(t)$ (an explicit expression for which will be given below), and that all solutions $\tilde{S}(t)$ of
\eqref{eq:s0} will satisfy
$$\lim_{t\rightarrow \infty}[\tilde{S}(t)-S(t)]=0,$$
implying that the weighted time-averaged amount of pollutant is
\begin{equation}\label{eq:av}\bar{S}=\lim_{t\rightarrow \infty}\frac{1}{t}\int_0^t w(t) \tilde{S}(\tau)d\tau=\frac{1}{T}\int_0^T w(t)S(t)dt,
\end{equation}
where the weight $w(t)$ is a $T$-periodic function
reflecting the cost of pollution, which may vary
according to season (as a particular case one can take $w(t)$ to be constant, as we shall do in our numerical examples).

Our aim will be to choose the effort profile
$\eta(t)$  so as to minimize the cost $\bar{S}$, under the 
constraint of a given time-averaged effort 
$\bar{\eta}$:
\begin{equation}\label{eq:ce}\frac{1}{T}\int_0^T \eta(t)dt=\bar{\eta}.\end{equation}
The optimal choice of the effort profile
 obviously depends on the inflow rate $c(t)$,
as well as on the weight $w(t)$ and the value of the
total effort constraint $\bar{\eta}$, and we aim to understand this dependence. 

To appreciate the intricacy of the optimization problem involved, note that, at any point in time,
the efficacy of a unit effort employed in clearing the pollution depends linearly on the current amount of pollutant in the environment. Thus, it seems
advisable to concentrate effort at times in which
the level of pollutant is high. However, this level itself depends on both the past inflow rate of the pollutant
(given by $c(t)$) and on all previous clearing efforts. 
In view of this circularity, it is far from obvious how to best spread efforts over time. In particular, two qualitative issues are of interest:
(i) Are there
certain time periods in which it is best not to
make any effort (i.e., set $\eta(t)=0$)? (ii) Are there times
when it is best to make a highly
concentrated effort (expressed mathematically as a $\delta$-function)? These questions will be among those addressed by our results.

\subsection{Formalization of the problem}

In the above description, we have been vague
about the assumptions made concerning the given functions $c(t),w(t)$ and the class of effort profiles $\eta(t)$ over which we optimize. We will now be precise.

We first fix some notations. 
$L^p(T)$ ($1\leq p\leq \infty,T>0$) will denote the space of 
$T$-periodic functions whose restriction to $[0,T]$ is $L^p$. $C(T)$ we will denote the space of continuous $T$-periodic functions.
$L^p_+(T),C_+(T)$ will denote the set of non-negative functions in these spaces.


We will assume that $c\in L^1_+(T)$, and $c$ is not identical to $0$ (further restrictions on $c$ will be imposed later). 
We note that the standard existence theory for solutions of ordinary differential equations (\cite{hale2009ordinary}, sec. I.5) ensures the existence of an absolutely continuous solution, with a given initial value, of \eqref{eq:s0}, if $\eta\in L^1(T)$. Among all solutions there is a unique 
$T$-periodic solution, given explicitly by
\begin{equation}\label{eq:spex}
\begin{split}
	S(t)=&\frac{1}{1-e^{-\int_0^T[\eta(s)+\delta]ds}}\times\\&\left[\int_0^t  c(r)e^{-\int_r^t [\eta(s)+\delta]ds}dr+e^{-\int_0^T [\eta(s)+\delta]ds}\int_t^T c(r) e^{-\int_r^t[\eta(s)+\delta]ds}dr\right].
    \end{split}
\end{equation}

For our purposes, however, it will be important to allow a more general 
space of effort profiles $\eta(t)$: Instead of assuming $\eta(t)$ to be 
a function in $L^1_+(T)$, we will 
assume it to be a non-negative {\it{measure}}. This extension is essential for several reasons. On the one hand, it will allow us to prove an existence (and uniqueness) result for our optimization problem using 
a compactness argument, which is not
possible when working in the space $L^1$. Indeed, we shall see that under some circumstances the optimal solution contains atomic components ($\delta$ functions) which can only be accommodated by allowing our solutions to be measures.
Moreover, this extension is also natural from a modeling viewpoint, as the atomic components in an effort profile $\eta(t)$ represent a concentrated effort which (in terms of the example above) instantly removes a fraction of the pollution. 

In our generalized formulation of the
problem, we replace the function $\eta(t)$
by a Borel regular positive $T$-periodic measure $\mu$ on $\Real$, where, for 
$t_1<t_2$, 
$\mu([t_1,t_2))$
denotes the total effort expended during this time interval.
The periodicity means that
we have
$\mu([t_1+T,t_2+T))=\mu([t_1,t_2))$ for all $t_1<t_2$.

We recall that such measures are in one-to-one correspondence with the set ${\cal{M}}(T)$ of all right-semicontinuous monotone non-decreasing functions $\alpha(t)$ on $\Real$, normalized so that $\alpha(0)=0$, and satisfying
\begin{equation}\label{eq:aper}\alpha(t+T)=\alpha(t)+\alpha(T),\quad\forall t.\end{equation}
Each $\alpha\in {\cal{M}}(T)$ is associated with the $T$-periodic measure  defined by
$$\mu_{\alpha}([t_1,t_2))=\alpha(t_2)-\alpha(t_1).$$
Thus $\alpha(t)$ is the cumulative effort made over the time interval $[0,t)$, and will thus be referred to as the {\it{cumulative effort profile}}.

The space $L^1_+(T)$ is embedded in ${\cal{M}}(T)$, 
by associating with $\eta\in L^1_+(T)$ the function
\begin{equation}\label{eq:ea}\alpha(t)=\int_0^t \eta(s)ds,\end{equation}
and we then have
$$\alpha(t_2)-\alpha(t_1)=\mu_{\alpha}([t_1,t_2))=\int_{t_1}^{t_2}\eta(t)dt,$$
so that a function $\eta\in L^1[0,T]$
corresponds to an {\it{absolutely continuous}}
measure $\mu_{\alpha}$.

Each $\alpha\in {\cal{M}}(T)$ is associated in a one-to-one manner with the positive linear functional $\alpha^*\in C(T)^*$ (the dual space of $C(T)$)
 defined by the Stieltjes integral
$$f\in C(T)\quad\Rightarrow\quad
\alpha^*[f]=\int_0^T f(t)d\alpha(t).$$

With the identification
of $\eta\in L^1_+(T)$ with $\alpha\in {\cal{M}}(T)$ given by \eqref{eq:ea}, the constraint on the cumulative effort per period given by \eqref{eq:ce} generalizes to
$$\frac{1}{T}\cdot \alpha(T)=\bar{\eta}.$$
For $\bar{\eta}>0$, we denote the set of 
cumulative effort profiles satisfying this constraint by
$${\cal{M}}(T,\bar{\eta})=\{ \alpha\in {\cal{M}}(T),\quad\alpha(T)=T\bar{\eta}\}.$$

Expression~\eqref{eq:spex} for the periodic solution $S(t)$
generalizes to~$S[\alpha](t)$ by 
replacing~$\int_r^t \eta(s)ds$ with $\int_r^t d\alpha(s)=\alpha(t)-\alpha(r)$, giving, for any $\alpha\in {\cal{M}}(T,\bar{\eta})$,
 \begin{equation}\label{eq:spex0}
 	S(t)={\cal{S}}[\alpha](t)\doteq\frac{e^{-\alpha(t)-\delta t }}{1-e^{-[\bar{\eta}+\delta ]T}}\left[\int_0^t  c(r)e^{\alpha(r)+\delta r}dr+e^{-[\bar{\eta}+\delta ]T}\int_t^T c(r)e^{\alpha(r)+\delta r}dr\right],
 \end{equation}
 which we can write more compactly as
\begin{subequations}\label{eq:spex1}
\begin{equation}
	S(t)={\cal{S}}[\alpha](t)\doteq e^{-\alpha(t)-\delta t }\int_0^T K(t,r)c(r)e^{\alpha(r)+\delta r}dr,
\end{equation}
where
\begin{equation}
K(t,r)=\begin{cases}
\frac{1}{1-e^{-[\bar{\eta}+\delta ]T}}& r<t\\
\frac{1}{e^{[\bar{\eta}+\delta ]T}-1}&r\ge t
\end{cases}.
\end{equation}
\end{subequations}
When $\eta\in L^1_+(T)$ and $\alpha$ is defined by \eqref{eq:ea}, we have that \eqref{eq:spex1} coincides with \eqref{eq:spex}.
We note that (using monotonicity of $\alpha$) we have, for $t\in [0,T)$,
$$0\leq S(t)\leq 
\frac{1}{1-e^{-[\bar{\eta}+\delta T]}}e^{-\alpha(t)-\delta t }\left[e^{\alpha(t)+\delta t}\int_0^t  c(r)dr+\int_t^T c(r)dr\right]$$
$$=\frac{1}{1-e^{-[\bar{\eta}+\delta T]}}\left[\int_0^t  c(r)dr+e^{-\alpha(t)-\delta t }\int_t^T c(r)dr\right]\leq \frac{\|c\|_{L^1(T)}}{1-e^{-[\bar{\eta}+\delta T]}},$$
and this inequality extends to all $t$ by periodicity of $S(t)$,
so that \eqref{eq:spex1} defines an 
operator ${\cal{S}}:{\cal{M}}(T)\rightarrow L^{\infty}(T)$.

Given a cumulative effort profile
$\alpha\in {\cal{M}}(T,\bar{\eta})$, we define the cost
\begin{equation}\label{eq:dPhi}
\Phi[\alpha]=\int_0^T w(t){\cal{S}}[\alpha](t)dt
=
\int_0^T w(t) e^{-\alpha(t)-\delta t }\int_0^T K(t,r) c(r) e^{\alpha(r)+\delta r}drdt
\end{equation}
where the weight function $w(t)$ is in $L^1_+(T)$
is not identically $0$. 

We have therefore defined a functional $\Phi: {\cal{M}}\rightarrow [0,\infty)$,
and we can formulate our main problem:
\begin{problem}\label{prob:main}
	Given $\bar{\eta}>0$, find $\alpha\in {\cal{M}}(T,\bar{\eta})$ for which 
	$\Phi[\alpha]$ is minimal.
\end{problem}
The {\it{data}} of our problem are thus the inflow rate function $c(t)$, the
weight function $w(t)$, and the cumulative effort constraint $\bar{\eta}$. The solution of our problem will be denoted $\alpha_{opt}$, which is the {\it{optimal cumulative effort profile}}, and the corresponding 
measure $\mu_{opt}$ will be called the 
{\it{optimizing measure}}.
Our aim is to characterize this measure, which we shall do using both
analytical methods and numerical 
computations.

\subsection{Related studies}

Our problem is an optimal control problem \cite{Macki}, and as such can be related to several strands in the vast literature
on optimal control.

The field of optimal periodic control aims to optimize the performance of systems using a time-periodic control signal \cite{bettiol2024principles,colonius2006optimal,gilbert1977optimal}. Applications includes chronotherapy~\cite{adam2019emerging}, 
drug administration~\cite{cortes2022dosing,ghanaatpishe2017structure,ghanaatpishe2017online,varigonda2008optimal},
harvesting \cite{Maarten,Tang1,Xiao}, pest control \cite{cardoso2009multi,Tang2}, chemotherapy~\cite{belfo2021optimal,cacace2018optimal,ledzewicz2004structure,swierniak2010periodic}, and vaccination~\cite{bai2012global,doutor2016optimal,moneim2005use,Shulgin}.
The focus of the above-mentioned literature is split between two categories of problems.  The first category of problems are
autonomous when subject to a constant
control, so that the central question is whether
introducing periodicity through the control can
improve upon the performance obtained under the best constant control. The second category of problems are intrinsically periodic, e.g., as in our case due to the periodicity of the inflow rate $c(t)$.  Our study focuses on the second type, and addresses the central question of how to {\it{exploit}} the inherent 
periodicity in an optimal manner, by choosing the appropriate effort profile. 

In the recent work \cite{ali2021maximizing}, periodic 
control problems were considered 
both in the case of an autonomous and
of a non-autonomous system. In particular, 
the problem considered in Section 4.3 of
\cite{ali2021maximizing} is equivalent to
the problem considered here, in the case where $c(t)$ is constant
(by setting $S(t)=1-x_0(t),c(t)=\lambda_1,w(t)=\lambda_1\beta(t),\delta=\lambda_1,\eta(t)=u_0(t)$ in the formulation of \cite{ali2021maximizing}), but
with the control ($\eta(t)$ in our notation) being a function restricted to lie
between two given positive values, and the weight $w(t)$ everywhere differentiable.
The boundedness of the controls allows one to obtain existence of an optimizer working in space $L^\infty$ (using Fillipov's theorem) and
to apply the Pontryagin Maximum Principle, together with an analysis of singular trajectories, to obtain an explicit expression for the
optimal control, on the set in which it lies strictly between the two given positive values, an expression which is essentially the same as the one we obtain in Theorem \ref{th:main} (Section \ref{sec:char}). 
In the current work, however,
we expand the space of controls over which the optimization problem is posed to include measures (rather than bounded functions), an extension which, as we show, is essential
when one wants to allow the data $c(t),w(t)$ to include discontinuous functions. This precludes the use of results in
optimal control theory which assume bounded controls.

Many works investigate optimal control problems in which the control is restricted {\it{a priori}} to consist of discrete 
impulses ($\delta$-functions), with applications 
such as impulsive harvesting \cite{Tang1,Xiao,Maarten}, impulsive pest control \cite{cardoso2009multi,Tang2}, chemotherapy~\cite{belfo2021optimal,cacace2018optimal}, and pulse vaccination \cite{Shulgin}. Here, in contrast, we are interested in a problem in which the optimal
control may be a mixture of continuous and impulsive components, in order to understand the conditions under which discrete impulses emerge as components of the optimal control. 

A theory of optimal control problems
in which the control is a measure has been developed under the title of Optimal Impulsive Control Theory
\cite{arutyunov2019optimal}, including generalizations of the Pontryagin Maximum Principle to such settings. 
This theory needs to address many
technical issues, since even the notion of solution for a differential 
involving coefficients which are 
measures needs to be carefully defined.
In the current work we have taken 
a different approach, exploiting the
fact that the periodic solution to our differential equation can be written in an explicit integral form, as in
\eqref{eq:spex1}, so that our problem 
becomes one of minimizing the explicit functional $\Phi$, see \eqref{eq:dPhi}. Working with this functional, which turns out to be convex, has enabled us to derive a first order condition which is necessary and sufficient for a measure to be an optimizer. Analysis of the first order condition allows us to characterize the optimizer, including regularity results, the explicit expression for the absolutely continuous component of the optimizing measure, and results on the atomic
component of the measure which arises 
when the data is discontinuous.

%

\subsection{Overview}

In Section \ref{sec:existence} we prove that the problem \ref{prob:main} has a unique solution in the space of positive measures $\alpha\in {\cal{M}}(T,\bar{\eta})$. Along the way we will show that the functional
$\Phi$ is convex, a fact which will be crucial for our arguments, since it implies that the first-order condition for 
optimality,
which is derived in Subsection
\ref{sec:foc}, is both necessary and sufficient.

Our main theorems characterizing the solution of Problem \ref{prob:main}
are presented and discussed in Section 
\ref{sec:char}, together 
with results of numerical computations which illustrate and illuminate the analytical results. Proofs of these results are given in Section \ref{sec:proofs}.

Theorem \ref{th:main} shows that when the data $c,w$ satisfy some regularity assumptions, the solution of problem \ref{prob:main} is absolutely continuous and provides an explicit expression for this solution on its support, the range of time at which effort is expended. When the cumulative effort allowed is
sufficiently large, that is when $\bar{\eta}$ is larger than a critical value, we show in Theorem \ref{th:pos} that 
the support of $\mu_{opt}$ is the entire real line, that is effort will be expended at all times. On the other hand, in
Theorem \ref{prop:eta0} we show that when $\bar{\eta}$ 
goes to $0$, the support of $\mu_{opt}$ contracts to 
a small set concentrated around the maximum points of an explicitly given function.

In Section~\ref{sec:disc},
we present results concerning the case in which one or both of the functions $c,w$ have points of discontinuity. In this case, and depending on the nature of discontinuities of $c,w$, 
the solution $\alpha_{opt}$ 
can also have discontinuities, which correspond to atomic ($\delta$-function) components in the optimizing measure $\mu_{opt}$. This shows the necessity of our generalized formulation of the problem in terms of measures, 
allowing the solution to include such atomic components.
Our analytical results (Theorem \ref{th:discont})
partially characterize the solutions which arise. We also present results of numerical computations in several examples, which further clarify the phenomena which can occur.

The analytical developments leading to the proofs of the 
above-mentioned theorems are 
presented in Section \ref{sec:proofs}. We first derive a first-order condition, which is necessary and sufficient for  $\alpha\in {\cal{M}}(T,\bar{\eta})$ to be a solution of Problem \ref{prob:main} (Subsection \ref{sec:foc}). This condition plays a key role, as the study of its implications allows us both to prove the regularity (continuity, differentiability, absolute continuity) of the solution, under appropriate conditions, and to 
obtain explicit expressions for it.
These regularity proofs, given in sections \ref{sec:continuity}-\ref{sec:absolute}, require some delicate arguments, which we believe are of independent interest, and could be used in treating other problems. In particular, in order to prove the 
key result that the function
$\alpha_{opt}$, and hence the optimizing measure $\mu_{opt}$, is absolutely continuous under appropriate assumptions on $c,w$, we employ a known result from real analysis, which guarantees that 
a function which is differentiable apart from a countable set, with an $L^1$ derivative, is absolutely continuous. Note that {\it{a.e.}} differentiability of $\alpha_{opt}$,
which is immediate from the fact that
$\alpha_{opt}$ is monotone,
is {\it{not}} sufficient for such a result, as shown by the famous example of the Cantor function. Thus it is essential to prove that $\alpha_{opt}$ is differentiable at all points apart from a countable set, which we do in Subsection 
\ref{sec:differentiability}.

After obtaining the regularity results we turn to obtaining an explicit expression for the 
density $\alpha_{opt}^\prime(t)$ of the 
optimizing measure, on sets in which it is absolutely continuous (Subsections \ref{sec:explicit},\ref{sec:complete}), and to the study of the case in which $c$ or $w$ are discontinuous, which 
can lead to atomic components of the optimizing measure (Subsection \ref{sec:jumps}). Some concluding remarks and open questions are discussed in Section \ref{sec:discussion}.

\section{Existence and uniqueness of an optimizer}
\label{sec:existence}

Throughout this section, we will make the following assumption:

\begin{assumption}\label{a:eu}
	
\begin{itemize}
\item[(i)] $w,c\in L^1_+(T)$,
 and neither is identically $0$.

\item[(ii)] Either
\begin{equation}\label{eq:pnz1}
	c(t)>0\quad{\mbox{for {\it{a.e.}}}}\;t,
\end{equation}
or 
\begin{equation}\label{eq:pnz2}
	w(t)>0\quad{\mbox{for {\it{a.e.}}}}\;t.
\end{equation}
\end{itemize}
\end{assumption}

The key result of this section is the 
following existence and uniqueness theorem.

\begin{theorem}\label{th:existence}
	Under Assumption \ref{a:eu}, 
	there exists a unique solution 
	$\alpha_{opt}\in {\cal{M}}(T,\bar{\eta})$ of 
	Problem \ref{prob:main}.
\end{theorem}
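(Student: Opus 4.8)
The plan is to obtain existence by the direct method of the calculus of variations and uniqueness by proving that $\Phi$ is strictly convex on the feasible set. First I would record that ${\cal{M}}(T,\bar{\eta})$ is convex: a convex combination $(1-\theta)\alpha_0+\theta\alpha_1$ of two normalized non-decreasing right-semicontinuous functions is again of this type, and the conditions $\alpha(0)=0$, $\alpha(t+T)=\alpha(t)+\alpha(T)$ and $\alpha(T)=T\bar{\eta}$ are all affine, hence preserved. This makes the minimization a convex problem over a convex set.

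For existence, since $\Phi\ge 0$ the infimum $m=\inf_{{\cal{M}}(T,\bar{\eta})}\Phi$ is finite; take a minimizing sequence $\alpha_n$. Each $\alpha_n$ is non-decreasing on $[0,T]$ with $\alpha_n(0)=0$ and $\alpha_n(T)=T\bar{\eta}$, hence uniformly bounded and of uniformly bounded variation. By Helly's selection theorem (equivalently, weak-$*$ compactness via Banach--Alaoglu of the positive measures of total mass $T\bar{\eta}$ on the compact circle $\Real/T\Integer$) a subsequence converges pointwise to a non-decreasing function whose right-semicontinuous version $\alpha_*$ lies in ${\cal{M}}(T)$. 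The delicate point is that the constraint survives in the limit: because the domain is the compact circle, testing the measures against the constant $1\in C(T)$ preserves total mass, so no mass escapes at the period endpoint and $\alpha_*(T)-\alpha_*(0)=T\bar{\eta}$, i.e. $\alpha_*\in {\cal{M}}(T,\bar{\eta})$. Since a monotone limit has at most countably many discontinuities, $\alpha_n\to\alpha_*$ a.e.; using the uniform bounds $0\le\alpha_n\le T\bar{\eta}$ on $[0,T]$ together with $|{\cal{S}}[\alpha_n]|\le \|c\|_{L^1(T)}/(1-e^{-[\bar{\eta}+\delta T]})$ established in the excerpt, two applications of dominated convergence (first in the inner $r$-integral of \eqref{eq:spex1}, then in the outer $t$-integral of \eqref{eq:dPhi}) give $\Phi[\alpha_n]\to\Phi[\alpha_*]$. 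Hence $\Phi[\alpha_*]=m$ and $\alpha_*$ is a minimizer.

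For uniqueness I would first reprove convexity and then sharpen it. Fixing $t,r$ and setting $\alpha_\theta=(1-\theta)\alpha_0+\theta\alpha_1$, the exponent $\alpha_\theta(r)-\alpha_\theta(t)$ is affine in $\theta$, so $e^{\alpha_\theta(r)-\alpha_\theta(t)}$ is convex in $\theta$; since $K,c,w\ge 0$, integrating in $r$ and then $t$ shows $\theta\mapsto\Phi[\alpha_\theta]$ is convex, which is the convexity announced in the overview. Differentiating twice under the integral (justified by the same bounds) yields
\begin{equation*}
\frac{d^2}{d\theta^2}\Phi[\alpha_\theta]=\int_0^T\!\!\int_0^T w(t)\,K(t,r)\,c(r)\,e^{\delta(r-t)}\,\bigl[\beta(r)-\beta(t)\bigr]^2 e^{\alpha_\theta(r)-\alpha_\theta(t)}\,dr\,dt,
\end{equation*}
where $\beta=\alpha_1-\alpha_0$. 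The integrand is non-negative, and the factors $K$, $e^{\delta(r-t)}$ and $e^{\alpha_\theta(r)-\alpha_\theta(t)}$ are strictly positive, so the second derivative is strictly positive unless $w(t)\,c(r)\,[\beta(r)-\beta(t)]^2=0$ for a.e. $(t,r)$.

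This is where Assumption \ref{a:eu}(ii) enters decisively, and I expect it to be the crux of the argument. Suppose $\alpha_0\ne\alpha_1$, so $\beta\not\equiv 0$ while $\beta(0)=\beta(T)=0$. If $c>0$ a.e., vanishing of the integrand forces, by Fubini, that $\beta(r)=\beta(t)$ for a.e. $r$, for a.e. $t$ in the positive-measure set $\{w>0\}$; fixing one such $t$ gives $\beta=$ const a.e., and right-semicontinuity of $\beta$ upgrades this to $\beta\equiv$ const everywhere, whence $\beta\equiv 0$ by $\beta(0)=0$ --- a contradiction. The case $w>0$ a.e. is symmetric, exchanging the roles of $t$ and $r$ and using $c\not\equiv 0$. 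Hence $\frac{d^2}{d\theta^2}\Phi[\alpha_\theta]>0$ for all $\theta$, so $\Phi$ is strictly convex along every non-trivial segment; two distinct minimizers $\alpha_0,\alpha_1$ would give $\Phi[\alpha_{1/2}]<m$ with $\alpha_{1/2}\in{\cal{M}}(T,\bar{\eta})$, contradicting minimality. The two steps I anticipate as the main obstacles are (a) confirming that no mass is lost in the compactness step, so that the limit actually satisfies $\alpha_*(T)=T\bar{\eta}$ rather than merely $\alpha_*(T)\le T\bar{\eta}$ --- precisely where compactness of the circle is essential --- and (b) the passage from ``$\beta$ constant a.e.'' to ``$\beta\equiv 0$,'' which must exploit the right-semicontinuity built into ${\cal{M}}(T)$ and the boundary values of $\beta$.
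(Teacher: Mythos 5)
Your proof is correct and takes essentially the same route as the paper: existence via a minimizing sequence, Helly's selection theorem, and dominated convergence (packaged in the paper as Lemma \ref{lemma:pc}), and uniqueness via strict convexity of $\Phi$ along segments, obtained from the same second-derivative computation and the same use of Assumption \ref{a:eu}(ii) to rule out the degenerate case. The only differences are cosmetic: you are in fact slightly more careful than the paper about the right-semicontinuity/normalization of the Helly limit (harmless, since $\Phi$ is unchanged by countable modifications and additive constants) and about upgrading ``$\beta$ constant a.e.'' to ``$\beta\equiv 0$'' via right-semicontinuity.
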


To prove uniqueness, we will need the following lemma, which states that the functional
$\Phi$ defined by \eqref{eq:dPhi} is convex.

\begin{lemma}\label{lemma:convexity} 
Let $\alpha,\tilde{\alpha}\in {\cal{M}}(T,\bar{\eta})$, $\alpha\neq \tilde{\alpha}$, and define
\begin{equation}\begin{split}\label{eq:ae1}\alpha_{\epsilon}(t)&=(1-\epsilon)\alpha(t)+\epsilon \tilde{\alpha}(t),\\
\phi(\epsilon)&=\Phi[\alpha_{\epsilon}],\qquad\epsilon \in [0,1].\end{split}\end{equation}
Then $\phi''(\epsilon)>0$ for all
$\epsilon \in [0,1]$. Therefore, for $\epsilon\in (0,1)$,
\begin{equation}\label{eq:con1}
\Phi[(1-\epsilon)\alpha+\epsilon \tilde{\alpha}]<
(1-\epsilon)\Phi[\alpha]+\epsilon \Phi[\tilde{\alpha}].
\end{equation}
\end{lemma}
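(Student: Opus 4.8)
The plan is to compute $\phi''(\epsilon)$ by differentiating twice under the integral sign and then to show the resulting integrand is non-negative everywhere and strictly positive on a set of positive measure. I would first rewrite the cost \eqref{eq:dPhi} as the symmetric double integral
$$\Phi[\alpha]=\int_0^T\int_0^T w(t)\,K(t,r)\,c(r)\,e^{[\alpha(r)+\delta r]-[\alpha(t)+\delta t]}\,dr\,dt,$$
and observe that along the segment $\alpha_\epsilon=(1-\epsilon)\alpha+\epsilon\tilde\alpha$ the exponent is \emph{affine} in $\epsilon$: setting $h=\tilde\alpha-\alpha$, it equals $[\alpha(r)+\delta r]-[\alpha(t)+\delta t]+\epsilon\big(h(r)-h(t)\big)$. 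Consequently each differentiation in $\epsilon$ of the exponential pulls down a factor $h(r)-h(t)$, yielding
$$\phi''(\epsilon)=\int_0^T\int_0^T w(t)\,K(t,r)\,c(r)\,\big(h(r)-h(t)\big)^2\,e^{[\alpha_\epsilon(r)+\delta r]-[\alpha_\epsilon(t)+\delta t]}\,dr\,dt.$$
The interchange of differentiation and integration is routine: since $\alpha,\tilde\alpha$ are monotone with $\alpha(0)=\tilde\alpha(0)=0$ and $\alpha(T)=\tilde\alpha(T)=T\bar\eta$, both are bounded on $[0,T]$, so $h$ and the exponent are bounded uniformly in $\epsilon\in[0,1]$ and $(t,r)\in[0,T]^2$; together with $K$ bounded and $w,c\in L^1(T)$ this provides a dominating function of the form $C\,w(t)c(r)\in L^1([0,T]^2)$, and Leibniz's rule applies.

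Given this formula, non-negativity of $\phi''(\epsilon)$ is immediate, since $w\ge 0$, $K>0$, $c\ge 0$, the square is non-negative and the exponential is positive. The substance of the lemma is to upgrade this to \emph{strict} positivity, which is where Assumption \ref{a:eu} and the hypothesis $\alpha\neq\tilde\alpha$ enter. The structural fact I would rely on is that $h=\tilde\alpha-\alpha$, being a difference of right-continuous functions, is itself right-continuous, satisfies $h(0)=0$, and is not identically zero; indeed a right-continuous function vanishing almost everywhere vanishes everywhere, so $\alpha\neq\tilde\alpha$ forces $h$ to be nonzero on a set of positive measure.

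I expect the strict-positivity step to be the main obstacle, and I would carry it out by contradiction. If $\phi''(\epsilon)=0$ then the integrand vanishes for a.e. $(t,r)$, so $w(t)\,c(r)\,(h(r)-h(t))^2=0$ for a.e. $(t,r)$. Under \eqref{eq:pnz1} ($c>0$ a.e.), Fubini lets me choose $t_0$ in the positive-measure set $\{w>0\}$ for which $h(r)=h(t_0)$ for a.e. $r$; under \eqref{eq:pnz2} ($w>0$ a.e.) the symmetric argument with a point $r_0\in\{c>0\}$ gives $h(t)=h(r_0)$ for a.e. $t$. In either case $h$ is a.e. equal to a constant, and since a right-continuous function agreeing a.e. with a constant equals that constant everywhere, together with $h(0)=0$ this forces $h\equiv 0$, i.e. $\alpha=\tilde\alpha$, a contradiction. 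Hence $\phi''(\epsilon)>0$ for every $\epsilon\in[0,1]$. Finally, strict positivity of $\phi''$ on $[0,1]$ makes $\phi$ strictly convex, and evaluating $\phi(0)=\Phi[\alpha]$ and $\phi(1)=\Phi[\tilde\alpha]$ gives the strict inequality \eqref{eq:con1}.
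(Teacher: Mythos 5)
Your proof is correct and follows essentially the same route as the paper's: differentiate twice under the integral sign to obtain $\phi''(\epsilon)$ as a double integral with the factor $(h(r)-h(t))^2$, observe non-negativity, and derive strict positivity by contradiction from Assumption \ref{a:eu}, concluding that $h$ would be a.e.\ constant and hence (using $h(0)=0$) identically zero. If anything, you are slightly more careful than the paper on two routine points it leaves implicit — the domination argument justifying Leibniz's rule, and the use of right-continuity to pass from ``$h$ constant a.e.'' to ``$h$ constant everywhere.''
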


\begin{proof}
Note that $\alpha_{\epsilon}\in {\cal{M}}(T,\bar{\eta})$ for all
$\epsilon \in [0,1]$. 
Defining
$$S_{\epsilon}(t)={\cal{S}}[\alpha_{\epsilon}],$$
where ${\cal{S}}$ is given by \eqref{eq:spex1}
we have
$$S_\epsilon(t)=\int_0^T K(t,r) c(r) e^{\alpha_{\epsilon}(r)-\alpha_{\epsilon}(t)+\delta (r-t)}dr.$$
We have
$$\frac{d}{d\epsilon}\alpha_{\epsilon}(t)=v(t)\doteq \tilde{\alpha}(t)-\alpha(t),$$
so that
\begin{equation}\label{eq:des}
\frac{d}{d\epsilon}S_\epsilon(t)=\int_0^T K(t,r)c(r) e^{\alpha_{\epsilon}(r)-\alpha_{\epsilon}(t)+\delta (r-t)}(v(r)-v(t))dr,
\end{equation}
and
\begin{equation*}\begin{split}
\frac{d^2}{d\epsilon^2}S_\epsilon(t)&=\int_0^T K(t,r) c(r) e^{\alpha_{\epsilon}(r)-\alpha_{\epsilon}(t)+\delta (r-t)}(v(r)-v(t))^2dr\\
	&\geq \frac{1}{e^{[\bar{\eta}+\delta ]T}-1}\cdot \int_0^T c(r) e^{\delta r}(v(r)-v(t))^2dr.
\end{split}\end{equation*}
%
Hence,
\begin{equation*}\begin{split}
\phi''(\epsilon)&=\frac{d^2}{d\epsilon^2}\left[\int_0^T w(t)S_{\epsilon}(t)dt\right]=\int_0^T w(t)\left[\frac{d^2}{d\epsilon^2}S_\epsilon(t)\right] dt\\&\geq\frac{ 1}{e^{[\bar{\eta}+\delta ]T}-1}\cdot\int_0^T\int_0^T w(t)c(r) e^{\delta r}(v(r)-v(t))^2 dr dt\geq 0,
\end{split}\end{equation*}
and equality $\phi''(\epsilon)=0$ can hold only if 
\begin{equation}\label{eq:zz}
w(t)c(r)(v(r)-v(t))^2=0,\quad {\mbox{for {\it{a.e.}}}}\;t,r.\end{equation}
Assuming $\eqref{eq:pnz1}$, and 
fixing $t$ with $w(t)>0$, 
we conclude that $v(r)=v(t)$
for {\it{a.e.}} $r$, so
$\alpha,\tilde{\alpha}$ differ by a constant - but since $\tilde{\alpha}(0)=\alpha(0)=0$ this implies $\tilde{\alpha}=\alpha$, contradicting the assumption
$\tilde{\alpha}\neq\alpha$. We have thus shown that $\phi''(\epsilon)>0$. An analogous argument applied if
we assume \eqref{eq:pnz2}.
Thus $\phi$ is a strictly convex function, which implies 
\eqref{eq:con1}.
\end{proof}

\begin{lemma}\label{lemma:pc}
Assume that a sequence of functions 
\[\{\alpha_n\}_{n=1}^\infty\subset {\cal{M}}(T,\bar{\eta})\] converges 
pointwise to a function~$\alpha$. Then $\alpha\in {\cal{M}}(T,\bar{\eta})$, and we have
$$\lim_{n\rightarrow \infty}\Phi[\alpha_n]=\Phi[\alpha].$$
\end{lemma}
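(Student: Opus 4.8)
The plan is to establish the two claims in turn: that the pointwise limit $\alpha$ inherits membership in ${\cal{M}}(T,\bar{\eta})$, and that $\Phi$ is continuous along the sequence via the dominated convergence theorem. For membership I would pass to the limit in each defining relation. Monotonicity survives, since $\alpha_n(s)\le\alpha_n(t)$ for $s<t$ yields $\alpha(s)\le\alpha(t)$ in the limit; likewise $\alpha(0)=\lim_n\alpha_n(0)=0$, the constraint $\alpha(T)=\lim_n\alpha_n(T)=T\bar{\eta}$, and the relation $\alpha(t+T)=\lim_n[\alpha_n(t)+\alpha_n(T)]=\alpha(t)+\alpha(T)$ of \eqref{eq:aper} all follow by taking limits. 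The only defining feature not transmitted automatically by pointwise convergence is the one-sided continuity built into ${\cal{M}}(T)$, since a pointwise limit of right-continuous monotone functions need not be right-continuous. Because $\alpha$ is monotone it has only countably many discontinuities and one-sided limits everywhere, so it agrees off a Lebesgue-null set with its one-sided-continuous modification, which does belong to ${\cal{M}}(T,\bar{\eta})$; as $\Phi$ sees $\alpha$ only through Lebesgue integrals, this identification leaves $\Phi[\alpha]$ unchanged.

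The heart of the lemma is the limit $\Phi[\alpha_n]\to\Phi[\alpha]$, which I would derive by dominated convergence applied to the double integral in \eqref{eq:dPhi}. For each fixed $(t,r)\in[0,T]^2$ the integrand $w(t)K(t,r)c(r)e^{\alpha_n(r)-\alpha_n(t)+\delta(r-t)}$ converges to $w(t)K(t,r)c(r)e^{\alpha(r)-\alpha(t)+\delta(r-t)}$, since $\alpha_n(r)\to\alpha(r)$, $\alpha_n(t)\to\alpha(t)$, and the exponential is continuous. For an integrable dominating function I would exploit the uniform a priori bound $0=\alpha_n(0)\le\alpha_n(t)\le\alpha_n(T)=T\bar{\eta}$, valid for every $t\in[0,T]$ and every $n$ by monotonicity and the constraint. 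This gives $e^{\alpha_n(r)-\alpha_n(t)}\le e^{T\bar{\eta}}$, while $e^{\delta(r-t)}\le e^{\delta T}$ and $0\le K(t,r)\le(1-e^{-[\bar{\eta}+\delta]T})^{-1}$ on $[0,T]^2$. Hence the integrand is dominated, uniformly in $n$, by $G(t,r)=\frac{e^{[\bar{\eta}+\delta]T}}{1-e^{-[\bar{\eta}+\delta]T}}\,w(t)c(r)$, and $\int_0^T\int_0^T G\,dr\,dt=\frac{e^{[\bar{\eta}+\delta]T}}{1-e^{-[\bar{\eta}+\delta]T}}\|w\|_{L^1(T)}\|c\|_{L^1(T)}<\infty$, so $G\in L^1([0,T]^2)$. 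Since the integrand is nonnegative and measurable, Tonelli's theorem lets me read \eqref{eq:dPhi} as a genuine double integral, and dominated convergence then delivers $\Phi[\alpha_n]\to\Phi[\alpha]$.

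I expect the delicate point to be the membership claim rather than the convergence. The dominated-convergence argument is robust, needing only the uniform bound $\alpha_n\in[0,T\bar{\eta}]$ on $[0,T]$, which is immediate, together with $w,c\in L^1(T)$. By contrast, pointwise convergence of the cumulative profiles is strictly weaker than weak-$*$ convergence of the associated measures, so mass can drift toward the period endpoints $0\equiv T$; the care in the membership step lies precisely in confirming that the one-sided-continuous modification of $\alpha$ still satisfies both $\alpha(0)=0$ and $\alpha(T)=T\bar{\eta}$, after which the identification above closes the argument.
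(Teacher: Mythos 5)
Your proof is correct and follows essentially the same route as the paper's: both arguments rest on the uniform bound $0\le\alpha_n(t)\le T\bar{\eta}$ on $[0,T]$ and a Lebesgue convergence theorem, the only structural difference being that you apply dominated convergence once to the double integral in \eqref{eq:dPhi} (via Tonelli, with dominating function $\mathrm{const}\cdot w(t)c(r)$), whereas the paper applies bounded convergence twice --- first to the inner integral to get ${\cal{S}}[\alpha_n](t)\to{\cal{S}}[\alpha](t)$ pointwise with a uniform bound, then to the outer integral against $w$. These are interchangeable, and both are airtight given $w,c\in L^1_+(T)$.

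One point where you go beyond the paper: you observe that a pointwise limit of right-semicontinuous monotone functions need not be right-semicontinuous, so the raw limit $\alpha$ may fail one of the defining conditions of ${\cal{M}}(T,\bar{\eta})$, and you repair this by passing to the right-continuous modification, which differs from $\alpha$ only on a countable (hence Lebesgue-null) set and therefore leaves $\Phi[\alpha]$ unchanged, since $\Phi$ sees $\alpha$ only through Lebesgue integrals. The paper's proof asserts $\alpha\in{\cal{M}}(T,\bar{\eta})$ from monotonicity and $\alpha(T)=T\bar{\eta}$ alone, silently ignoring this issue; your version is the more careful one, and the patch is exactly the right way to close that gap without affecting how the lemma is used in the Helly-selection argument of Theorem \ref{th:existence}.
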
 

\begin{proof}
The pointwise limit of a sequence of 
monotone functions is a monotone function, and by the pointwise convergence we have
$$\alpha(T)=\lim_{n\rightarrow\infty }\alpha_n(T)=\bar{\eta}T,$$
hence $\alpha\in {\cal{M}}(T,\bar{\eta})$. 

By the Lebesgue bounded convergence theorem, the pointwise convergence and uniform boundedness of $\alpha_n$, together with
\eqref{eq:spex1},
implies that ${\cal{S}}[\alpha_{n}]$  converge pointwise to 
${\cal{S}}[\alpha]$, and are uniformly bounded. Using
\eqref{eq:dPhi} and the Lebesgue bounded convergence theorem, this implies
$$\lim_{n\rightarrow \infty}\Phi[\alpha_{n}]=\lim_{n\rightarrow \infty}\int_0^T w(t){\cal{S}}[\alpha_{n}](t)dt=\int_0^T w(t){\cal{S}}[\alpha](t)dt=\Phi[\alpha].$$
\end{proof}

We now prove the existence and uniqueness of Problem~\ref{prob:main}.

\begin{proof}[Proof of Theorem \ref{th:existence}]
	We first prove existence.
	Let 
	$$\Phi_{\min}=\inf \{\Phi[\alpha]\;|\; \alpha\in {\cal{M}}(T,\bar{\eta})\}.$$
	Let $\{\alpha_n\}_{n=1}^\infty \subset {\cal{M}}(T,\bar{\eta})$ be a minimizing sequence,  that is 
	$$\lim_{n\rightarrow\infty}\Phi[\alpha_n]=\Phi_{\min}.$$
	Since~$\{\alpha_n\}_{n=1}^\infty$
	is a sequence of monotone non-decreasing functions, that are uniformly bounded, the Helly selection theorem
	(see e.g. \cite[Theorem 13.16]{carothers2000real}) implies 
	that there exists a subsequence 
	$\{\alpha_{n_k}\}_{k=1}^\infty$
	which converges pointwise to a function $\alpha_{opt}$.

By Lemma \ref{lemma:pc} we have 
$\alpha_{opt}\in {\cal{M}}(T,\bar{\eta})$
and $\Phi[\alpha_{opt}]=\Phi_{\min}$,
so that $\alpha_{opt}$ is a solution of Problem~\ref{prob:main}.
	
To see that~$\alpha_{opt}$ is {\it{unique}}, assume that $\alpha,\tilde{\alpha}\in {\cal{M}}(T,\bar{\eta})$ are two distinct solutions to problem \ref{prob:main}, so that~$\Phi[\alpha]=\Phi[\tilde{\alpha}]=\Phi_{\min}$.
	 Then, by \eqref{eq:con1} we have, for $\epsilon\in (0,1)$,
$$\Phi_{\min}\leq 
\Phi[(1-\epsilon)\alpha+\epsilon \tilde{\alpha}]<
(1-\epsilon)\Phi[\alpha]+\epsilon \Phi[\tilde{\alpha}]
=\Phi_{\min},$$
giving a contradiction. 	
\end{proof}

Having proved the existence and uniqueness of a solution, it now becomes interesting to characterize the solution as precisely as possible.

\section{Characterization of the optimizer}
\label{sec:char}

The optimal measure $\mu_{opt}$,
whose existence was proved in the 
previous section, can in principle be an arbitrary measure, so it can include an absolutely continuous and a singular component.
Our main results, to be presented in this section,
are aimed at obtaining a 
more precise characterization of the 
optimizer - including both its regularity properties and its explicit form.

Throughout this section, we will assume that Assumption \ref{a:eu} holds,
implying, by Theorem \ref{th:existence},
that Problem \ref{prob:main} has a unique solution, which is denoted by $\alpha_{opt}$. 
Further assumptions on $c,w$
will be made as needed for the various results.

\subsection{The case of continuous data}

The theorem below will guarantee that, under some regularity of the data (see Assumption \ref{a:reg}) the solution $\alpha_{opt}$ is
absolutely continuous, so that the measure $\mu_{opt}$ is absolutely continuous and has a
density $\eta_{opt}(t)=\alpha_{opt}^\prime(t)$.
Furthermore, we will find an {\it{explicit}} expression for this density at points of the support of the measure $\mu_{opt}$ (on points outside the support, this density is of course $0$).

We recall that the support of a
measure $\mu_{\alpha}$ ($\alpha\in {\cal{M}}(T)$), which we will denote by 
$\Omega(\alpha)$, is the set of points 
$t$  every open neighborhood of which has  positive measure, that is 
$$t\in \Omega(\alpha)\quad\Leftrightarrow\quad 
\alpha(t+\epsilon)-\alpha(t-\epsilon)>0\quad\forall \epsilon>0.$$
Note that the support of a measure is a closed set, and that in 
view of \eqref{eq:aper}, the 
set $\Omega(\alpha)$ is $T$-periodic in the sense that if
$t\in \Omega(\alpha)$ then $t+kT\in \Omega(\alpha)$ for all $k\in \Integer$. 
In terms of our model's interpretation, the support $\Omega(\alpha)$ is
the `active set' - the range of time at which some effort is expended.
$|\Omega(\alpha)\cap [0,T)|$ will denote the Lebesgue measure of this set of times within each period. 

For the following theorem, we will make the following assumption on the regularity of our data:

\begin{assumption}\label{a:reg}
	
\begin{itemize}	
\item[(i)] $w,c\in C(T)$ 
are absolutely continuous on $[0,T]$ (hence on any closed interval).

\item[(ii)] $w,c$ are differentiable at all points apart from a countable set.

\item[(iii)] $c(t)>0$ and $w(t)>0$ for all $t$.
\end{itemize}
\end{assumption}

\begin{theorem}\label{th:main}
Under Assumption \ref{a:reg},
let $\alpha_{opt}$ be the 
solution of Problem 
\ref{prob:main}, and denote its support by
$\Omega=\Omega(\alpha_{opt})$.
Then $\alpha_{opt}$ is absolutely continuous on any closed interval, and 
\begin{equation}\label{eq:alphae}\alpha_{opt}(t)=\int_0^t \eta_{opt}(t)dt,\end{equation}
where
\begin{equation}\label{eq:etaopt}\eta_{opt}(t)=\begin{cases}
\eta^*(t) & t\in \Omega\\
0 & t\not\in \Omega,
\end{cases}\end{equation}
with
\begin{equation}\label{eq:etaopt1}
	\eta^*(t)\doteq \frac{1}{\lambda}\cdot \sqrt{w(t)c(t)} -\frac{1}{2} \left[\ln \frac{c(t)}{w(t)} \right]'-\delta,
\end{equation}
\begin{equation}\label{eq:lambda1}\lambda =\frac{\int_{\Omega\cap [0,T]} \sqrt{w(t)c(t)}dt}{T\bar{\eta}+\delta\cdot |\Omega\cap [0,T)|+\frac{1}{2} \int_{\Omega\cap [0,T]}\left[\ln \frac{c(t)}{w(t)}\right]'dt}.\end{equation}
We also have
\begin{equation}\label{eq:cc}{\cal{S}}[\alpha_{opt}](t)=\lambda\cdot \sqrt{\frac{c(t)}{w(t)}},\;\;\;t\in \Omega.\end{equation}
\end{theorem}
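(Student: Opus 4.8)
The plan is to exploit the convexity of $\Phi$ (Lemma \ref{lemma:convexity}), which makes the first-order stationarity condition both necessary and sufficient for optimality, and then to extract from that condition first the regularity and then the explicit form of $\alpha_{opt}$. To derive the condition I fix a competitor $\tilde\alpha\in\mathcal{M}(T,\bar\eta)$, set $\alpha_\epsilon=(1-\epsilon)\alpha_{opt}+\epsilon\tilde\alpha$ and $v=\tilde\alpha-\alpha_{opt}$, and differentiate $\phi(\epsilon)=\Phi[\alpha_\epsilon]$ at $\epsilon=0$ using \eqref{eq:des}. Splitting the resulting double integral with Fubini and introducing the adjoint quantity
$$ p(t)\doteq e^{\alpha_{opt}(t)+\delta t}\int_0^T w(s)K(s,t)e^{-\alpha_{opt}(s)-\delta s}\,ds, $$
the two pieces collapse to $\int_0^T v(r)\,c(r)p(r)\,dr$ and $\int_0^T v(t)\,w(t)\,\mathcal{S}[\alpha_{opt}](t)\,dt$, so that, writing $u\doteq\mathcal{S}[\alpha_{opt}]$ and $g\doteq c\,p-w\,u$, one gets $\phi'(0)=\int_0^T v(t)\,g(t)\,dt$. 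Since $v(t)=(\tilde\mu-\mu_{opt})([0,t))$, a further Fubini rewrites this as $\int_0^T G(s)\,d(\tilde\mu-\mu_{opt})(s)$ with $G(s)=\int_s^T g$. Optimality ($\phi'(0)\ge 0$ for every admissible competitor) then forces, by the usual mass-transfer argument (move mass out of the support and conclude $G$ is minimized on it), a constant $\lambda_0$ with $G\equiv\lambda_0$ on the support $\Omega$ and $G\ge\lambda_0$ off it.

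The main obstacle is regularity: to turn this condition into a pointwise differential identity I must first know $\alpha_{opt}$ is absolutely continuous, and this is exactly where Assumption \ref{a:reg} (continuity and strict positivity of $c,w$) is used. I would proceed in three stages. First, continuity of $\alpha_{opt}$, i.e. the absence of atoms: an atom at $t_0$ would make $e^{-\alpha_{opt}}$ jump down and $e^{\alpha_{opt}}$ jump up, forcing $u$ to jump down and $p$ to jump up while $c,w$ remain continuous, and one shows this is incompatible with the variational inequality; this step is delicate and occupies a separate argument. Second, differentiability of $\alpha_{opt}$ off a countable set. Third, absolute continuity, obtained by feeding the previous two steps into the real-analysis result that a function differentiable apart from a countable set with $L^1$ derivative is absolutely continuous — note that mere \emph{a.e.} differentiability, automatic for the monotone $\alpha_{opt}$, would not suffice, as the Cantor function shows. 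I expect this regularity chain, rather than the algebra below, to be the genuinely hard part.

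Once $\alpha_{opt}$ is absolutely continuous with density $\eta\doteq\alpha_{opt}'$, the functions $u$ and $p$ solve $u'=c-(\eta+\delta)u$ and $p'=(\eta+\delta)p-w$, so $g$ is continuous, $G$ is $C^1$, and since every point of $\Omega$ is a global minimum of $G$ we obtain $g=c\,p-w\,u=0$ on all of $\Omega$. A short computation also gives $(up)'=c\,p-w\,u=g$. Differentiating the support identity $c\,p=w\,u$ at points where $c,w,\alpha_{opt}$ are all differentiable (all but countably many) and substituting the two ODEs eliminates $\eta$ and collapses to $u'/u=\tfrac12[\ln(c/w)]'$; integrating gives $u=\lambda\sqrt{c/w}$ on each component of $\Omega$, which is \eqref{eq:cc}. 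To check the integration constant is one and the same $\lambda$ on every component, I use $(up)'=g$ together with $\int_{\text{gap}}g=G(\text{left end})-G(\text{right end})=\lambda_0-\lambda_0=0$ across any complementary gap whose endpoints lie in $\Omega$; since $u\,p=\lambda^2$ on $\Omega$, continuity of $up$ forces equal constants. Substituting $u=\lambda\sqrt{c/w}$ into $\eta+\delta=c/u-\tfrac12[\ln(c/w)]'$ then yields precisely the density \eqref{eq:etaopt1}.

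Finally, $\lambda$ is pinned down by the mass constraint: since $\eta_{opt}=\eta^*$ on $\Omega$ and $0$ off it, integrating \eqref{eq:etaopt1} over $\Omega\cap[0,T]$ and equating to $\alpha_{opt}(T)=T\bar\eta$ gives $\tfrac1\lambda\int_{\Omega\cap[0,T]}\sqrt{wc}=T\bar\eta+\delta\,|\Omega\cap[0,T)|+\tfrac12\int_{\Omega\cap[0,T]}[\ln(c/w)]'$, which is \eqref{eq:lambda1}. Together with \eqref{eq:alphae}--\eqref{eq:etaopt} this completes the characterization.
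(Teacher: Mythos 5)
Your overall architecture is the same as the paper's: strict convexity makes the first-order condition necessary and sufficient; your $g=cp-wu$ is exactly $-h[\alpha_{opt}]$ from \eqref{eq:h}, your $G$ is $-\psi[\alpha_{opt}]$, and your ``$G$ constant on $\Omega$, $G\geq$ that constant elsewhere'' is Lemma \ref{lemma:bh1}; your three-stage regularity chain (no atoms, differentiability off a countable set, then the real-analysis lemma) is precisely Sections \ref{sec:continuity}--\ref{sec:absolute}. Where you genuinely diverge is the passage to the explicit formulas: the paper never differentiates the support identity. It instead proves the algebraic identity of Lemma \ref{lemma:kid} --- which is exactly the integrated form of your observation $(up)'=g$, namely $up=\psi[\alpha]+M$ --- and then runs a test-function argument (Lemma \ref{lemma:srep}) to show $\frac{w}{c}S_{opt}^2$ is a.e.\ constant on $\Omega$, after which the ODE $S_{opt}'=-(\alpha_{opt}'+\delta)S_{opt}+c$ yields the density, as in your last step.

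There is, however, a genuine gap in your route to \eqref{eq:cc} with a single constant $\lambda$. You integrate $u'/u=\frac{1}{2}[\ln(c/w)]'$ ``on each component of $\Omega$'' and then match constants across complementary gaps. This tacitly assumes $\Omega$ is a union of nondegenerate intervals, which nothing in the theorem or in your regularity results guarantees: $\Omega$ is merely a closed set, and a support with empty interior (Cantor-like, possibly of positive measure) is not excluded. In that case every component is a singleton, the componentwise integration is vacuous, and agreement of $up$ at the two endpoints of each gap does not imply constancy of $up$ on $\Omega$ --- the Cantor function agrees at the endpoints of every gap of the Cantor set without being constant on it. The repair needs only tools you already have: once $\alpha_{opt}$ is absolutely continuous, $u$, $p$ and $G$ are absolutely continuous, and $(up+G)'=g-g=0$ a.e., so $up+G$ is constant on $[0,T]$; since $G\equiv\lambda_0$ on $\Omega$, $up$ is constant on $\Omega$, and combined with $cp=wu$ on $\Omega$ this gives $u=\lambda\sqrt{c/w}$ on all of $\Omega$ with one $\lambda$, no structural assumption on $\Omega$ needed (and no need for the componentwise differentiation at all). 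This first integral is exactly the role Lemma \ref{lemma:kid} plays in the paper.

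A second gap: your stage ``differentiability of $\alpha_{opt}$ off a countable set'' is asserted with no argument, yet it is the crux of the regularity chain --- a continuous monotone function can fail to be differentiable on an uncountable null set, so the claim cannot come from monotonicity plus continuity. The paper's device (Lemma \ref{lemma:ht0}(iii) and Lemma \ref{lemma:diff}) is to solve the support identity pointwise for the optimizer: your $cp=wu$ gives $e^{2(\alpha_{opt}(t)+\delta t)}=w(t)U(t)/(c(t)P(t))$, with $U,P$ the continuous integral factors of $u,p$, so on $\Omega$ the optimizer coincides with a differentiable function $\rho$; one then still needs the squeeze argument of Lemma \ref{lemma:diff} to handle points adjacent to gaps of $\Omega$, and Lemma \ref{lemma:top} to discard the countably many points that are not limit points of $\Omega$ from both sides. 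Without this (or an equivalent) your third stage cannot run, since Lemma \ref{lemma:j} requires differentiability off a countable set, not merely a.e.
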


\begin{figure}
	\begin{center}
		\includegraphics[width=1\linewidth]{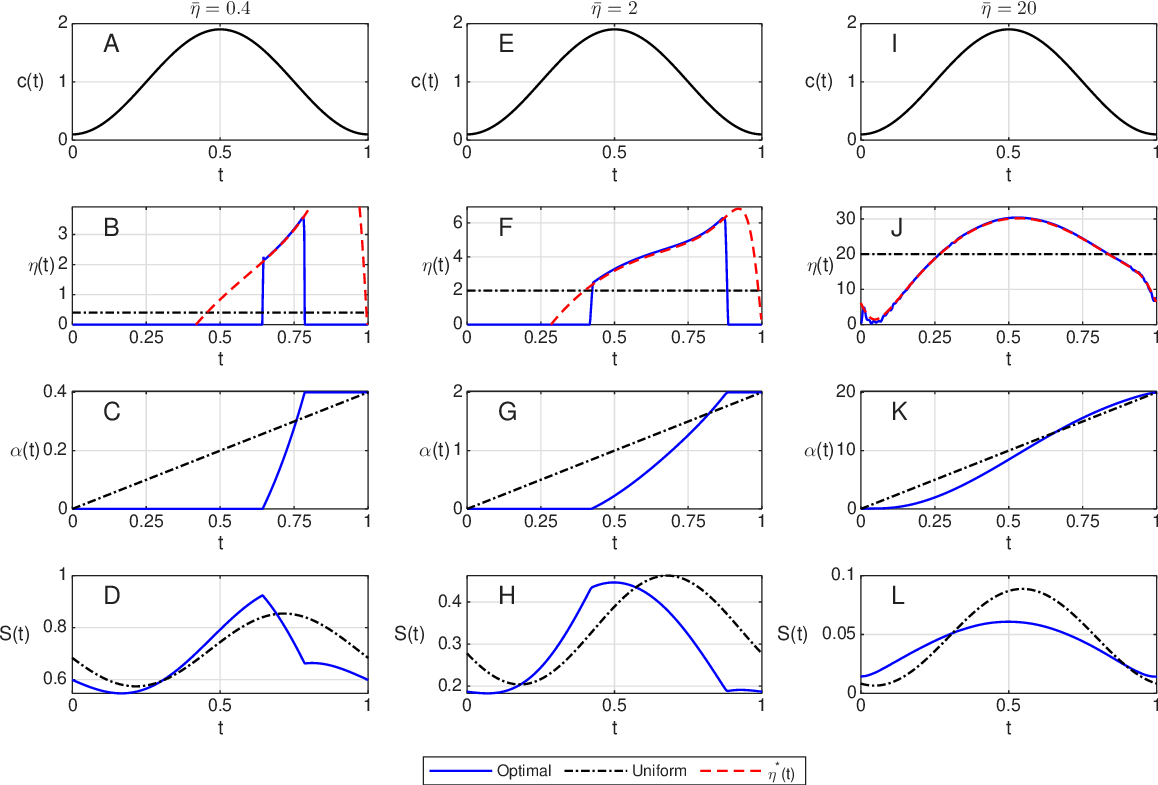}
	\end{center}
	\caption{Optimal effort profile ($\eta_{opt}(t)$), cumulative effort
		profile ($\alpha_{opt}(t)$) and the resulting function $S_{opt}(t)$
		for inflow rate $c(t)=1-0.9\cos(2\pi t)$, weight $w(t)\equiv 1$, $\delta=1$,  and three values of $\bar{\eta}$. For comparison, dashed black lines show corresponding results for a uniform effort profile $\eta(t)=\bar{\eta}$. The dashed red line in the second row display the analytical expression \eqref{eq:etaopt1}.}
	\label{fig:1}
\end{figure}

The proof of Theorem \ref{th:main} will be given at the end of Subsection
\ref{sec:explicit}, after developing the necessary tools.
In fact the above results  will be shown to hold even if the functions $c,w$ are allowed to be discontinuous on a discrete set, restricting to the points of
continuity. This is treated in Theorem \ref{th:discont} below. 

Theorem \ref{th:main} provides an explicit expression
\eqref{eq:alphae}, \eqref{eq:etaopt} for the the functional form of the solution
of Problem \ref{prob:main}, but lacks a detailed description of
the set $\Omega(\alpha_{opt})$ on which the 
function $\eta_{opt}$ is supported.
Therefore, in general, obtaining
the optimizer will require a
numerical approach. The numerical computations were performed by direct minimization of a discretized version of the functional $\Phi$. Details of the numerical method are given in Appendix \ref{sec:numerical}.

We now present some numerical results which will illustrate the contents of Theorem \ref{th:main}.

\begin{example}\label{ex:1}
In Figure \ref{fig:1} we use the inflow rate
$c(t)=1-0.9\cos(2\pi t)$ (graph shown in the top row) with period $T=1$, constant weight $w(t)\equiv 1$, and $\delta=1$. 
\end{example}

Since the functions 
$c(t),w(t)$ satisfy all assumptions of Theorem \ref{th:main}, the solution $\alpha_{opt}$ of Problem \ref{prob:main} is absolutely continuous.

We numerically compute the optimal cumulative effort 
$\alpha_{opt}(t)$  and the corresponding effort profile $\eta_{opt}(t)=\alpha_{opt}^\prime(t)$ and display their graphs (solid blue line). For comparison, the dashed black lines display the constant effort profile $\eta(t)=\bar{\eta}$, with the same mean, and the corresponding cumulative effort profile.
 We also present the resulting function $S(t)={\cal{S}}[\alpha_{opt}]$, both for the optimal effort profile $\alpha_{opt}$ and for the constant effort profile.  

Figure \ref{fig:1} presents results for three 
different values of the mean effort per period 
$\bar{\eta}$. In the graphs
displaying $\eta(t)$, we also 
display the graph of the analytical expression $\eta^*(t)$ given by \eqref{eq:etaopt1} (dashed red line). In accordance with Theorem \ref{th:main}, we observe that $\eta_{opt}(t)$ coincides with $\eta^*(t)$ at all points 
$t$ in the support $\Omega(\alpha_{opt})$.

When $\bar{\eta}$ is low (left column), we observe that the support 
$\Omega(\alpha_{opt})$ is a narrow interval - all effort is concentrated in a short time period within each cycle, leading to a rapid reduction in the pollutant concentration $S$ during this period. We note that this 
active time-period commences somewhat after the time of peak inflow rate $c(t)$, allowing 
pollution to accumulate to a relatively high level so that each unit of effort produces a
large effect. 

As $\bar{\eta}$ increases, so that the
constraint on total effort is relaxed, we observe (Figure \ref{fig:1}, middle column) 
that the active interval $\Omega(\alpha_{opt})$ expands. When
$\bar{\eta}$ is sufficiently high (right column), the function $\eta_{opt}$ becomes positive at all times, that is, $\Omega(\alpha_{opt})=\Real$, so that there are no `holidays' and some effort is expended at any time. This phenomenon is a general one, as proved in the subsequent subsection. 

\subsection{The case of a positive optimizer}

When $\bar{\eta}$ is sufficiently large (larger than an explicitly given value $\bar{\eta}_m$ given below), 
$\eta_{opt}$ is everywhere positive, that is, $\Omega(\alpha_{opt})=\Real$,
in which case \eqref{eq:etaopt}
provides an entirely explicit
expression for the solution.

\begin{theorem}\label{th:pos} Under Assumption \ref{a:reg}, define:
\begin{equation}\label{eq:etam}\bar\eta_m\doteq  \frac{1}{T}\int_0^T \sqrt{w(t)c(t)}dt\cdot \sup_{t\in [0,T)}\left[\frac{1}{\sqrt{w(t)c(t)}}\left(\delta+\frac{1}{2}\cdot \left[\ln\frac{c(t)}{w(t)}\right]'\right)\right]-\delta.
\end{equation}
Then,
	
	(i) If $\bar{\eta}>\bar{\eta}_m$, then $\Omega(\alpha_{opt})=\Real$ and
	$\alpha_{opt}$  is given by
\begin{equation}\label{eq:in1}\alpha_{opt}(t)=\int_0^t \hat{\eta}(s)ds,\end{equation}
	where $\hat{\eta}(t)$ is the positive function defined by
	\begin{equation}\label{eq:hateta}
		\hat{\eta}(t)\doteq \frac{\bar{\eta}+\delta}{\ \frac{1}{T}\int_0^T \sqrt{w(s)c(s)}ds}\cdot \sqrt{w(t)c(t)}-\frac{1}{2}\cdot \left[\ln\frac{c(t)}{w(t)}\right]'-\delta.\end{equation}
	
	In this case we also have
	\begin{equation}\label{eq:shat}
		S_{opt}(t)=
		\frac{1}{\bar{\eta}+\delta}\cdot \frac{1}{T}\int_0^T \sqrt{w(s)c(s)}ds\cdot \sqrt{\frac{c(t)}{w(t)}},
	\end{equation}
	and the optimal value of Problem \ref{prob:main}
	is given by
	\begin{equation}\label{eq:optval}
		\Phi_{\min}=\Phi[\alpha_{opt}]= \frac{1}{\bar{\eta}+\delta}\cdot\frac{1}{T}\cdot \left(\int_0^T  \sqrt{w(s)c(s)} ds\right)^2.
	\end{equation}

	(ii) If $\bar{\eta}<\bar{\eta}_m$ then $\Omega(\alpha_{opt})$ is a set of positive measure.
\end{theorem}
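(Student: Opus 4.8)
The engine of the proof is the observation that $\bar\eta_m$ is precisely the value at which the explicit candidate density $\hat\eta$ of \eqref{eq:hateta} ceases to be non-negative. I would first record the elementary equivalence
$$\bar\eta>\bar\eta_m \quad\Longleftrightarrow\quad \hat\eta(t)>0 \ \text{ for all } t\in[0,T),$$
obtained by rewriting the inequality $\hat\eta(t)>0$ as
$$\frac{\bar\eta+\delta}{\frac1T\int_0^T\sqrt{w(s)c(s)}\,ds}>\frac{1}{\sqrt{w(t)c(t)}}\left(\delta+\tfrac12\left[\ln\tfrac{c}{w}\right]'\right)$$
and taking the supremum over $t\in[0,T)$; the definition \eqref{eq:etam} says exactly that this holds for every $t$ if and only if $\bar\eta>\bar\eta_m$. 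Note that the positivity and absolute continuity of $c,w$ in Assumption \ref{a:reg} make $\left[\ln\frac{c}{w}\right]'$ an $L^1$ function, so $\hat\eta\in L^1_+(T)$.

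\textbf{Part (i): the candidate is the optimizer.} Assuming $\bar\eta>\bar\eta_m$, set $\hat\alpha(t)=\int_0^t\hat\eta(s)\,ds$. I would check $\hat\alpha\in\mathcal{M}(T,\bar\eta)$: it is absolutely continuous and strictly increasing since $\hat\eta>0$, and $\hat\alpha(T)=T\bar\eta$ follows because $\int_0^T\left[\ln\frac{c}{w}\right]'\,dt=0$ by periodicity. The substantive step is to verify that $\hat\alpha$ satisfies the first-order optimality condition, which I would do by showing that the directional derivative $\phi'(0)=\frac{d}{d\epsilon}\Phi[\hat\alpha+\epsilon(\tilde\alpha-\hat\alpha)]\big|_{\epsilon=0}$ vanishes for every $\tilde\alpha\in\mathcal{M}(T,\bar\eta)$. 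Writing $v=\tilde\alpha-\hat\alpha$ and rearranging \eqref{eq:des}, this derivative splits into two terms. The first, using $\int_0^T K(t,r)c(r)e^{\hat\alpha(r)-\hat\alpha(t)+\delta(r-t)}\,dr=\mathcal{S}[\hat\alpha](t)$, equals $\int_0^T v(t)\,w(t)\,\mathcal{S}[\hat\alpha](t)\,dt$; the second, after interchanging the order of integration, equals $\int_0^T v(r)\,c(r)P(r)\,dr$, where $P(r)=e^{\hat\alpha(r)+\delta r}\int_0^T w(t)K(t,r)e^{-\hat\alpha(t)-\delta t}\,dt$ is an adjoint quantity. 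The crux is then the pair of ODE identities
$$\mathcal{S}[\hat\alpha](t)=\lambda\sqrt{\tfrac{c(t)}{w(t)}},\qquad P(t)=\lambda\sqrt{\tfrac{w(t)}{c(t)}},\qquad \lambda\doteq\frac{\frac1T\int_0^T\sqrt{w(s)c(s)}\,ds}{\bar\eta+\delta}.$$
I would prove each by substituting the stated $\sqrt{\cdot}$ ansatz into the forward equation $S'=c-(\hat\eta+\delta)S$ and the adjoint equation $P'=(\hat\eta+\delta)P-w$ respectively: using $\frac{d}{dt}\sqrt{c/w}=\frac12[\ln\frac{c}{w}]'\sqrt{c/w}$, both reduce to the identity that \emph{defines} $\hat\eta$ in \eqref{eq:hateta}, and since each linear equation possesses a unique $T$-periodic solution, the ansätze are the true solutions. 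With both identities in hand, each of the two terms becomes $\lambda\int_0^T v(t)\sqrt{w(t)c(t)}\,dt$, so they cancel and $\phi'(0)=0$. By the strict convexity established in Lemma \ref{lemma:convexity} ($\phi''>0$), $\phi'(0)=0$ forces $\phi(1)\ge\phi(0)$, i.e.\ $\Phi[\tilde\alpha]\ge\Phi[\hat\alpha]$; hence $\hat\alpha$ is the global minimizer and, by uniqueness (Theorem \ref{th:existence}), $\hat\alpha=\alpha_{opt}$. Since $\hat\eta>0$ everywhere, $\Omega(\alpha_{opt})=\Real$.

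\textbf{The explicit formulas.} Formula \eqref{eq:shat} is just the forward identity $S_{opt}=\lambda\sqrt{c/w}$ with the above value of $\lambda$, and \eqref{eq:optval} follows by substitution:
$$\Phi_{\min}=\int_0^T w(t)S_{opt}(t)\,dt=\lambda\int_0^T\sqrt{w(t)c(t)}\,dt=\frac{1}{\bar\eta+\delta}\cdot\frac1T\left(\int_0^T\sqrt{w(t)c(t)}\,dt\right)^2.$$

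\textbf{Part (ii).} Suppose $\bar\eta<\bar\eta_m$, so that by the equivalence above $\hat\eta(t_0)<0$ for some $t_0$. Then $\Omega(\alpha_{opt})\neq\Real$: were the support all of $\Real$, Theorem \ref{th:main} would force $\eta_{opt}=\eta^*$ everywhere with $\lambda$ reducing (via $\int_0^T[\ln\frac{c}{w}]'=0$) to the value above, i.e.\ $\eta_{opt}=\hat\eta$, contradicting $\eta_{opt}\ge0$. To obtain positive measure of $\Omega$, I would invoke the absolute continuity of $\mu_{opt}$ from Theorem \ref{th:main}: the density $\eta_{opt}$ vanishes off $\Omega$, so if $|\Omega\cap[0,T)|=0$ then $T\bar\eta=\alpha_{opt}(T)=\int_{[0,T)}\eta_{opt}\,dt=0$, a contradiction. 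Hence $|\Omega\cap[0,T)|>0$.

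\textbf{Main obstacle.} The delicate part is the optimality verification in Part (i)---in particular identifying and analyzing the adjoint function $P$ and establishing the two $\sqrt{\cdot}$ ODE identities together with the periodicity/uniqueness needed to pin them down; the threshold equivalence and Part (ii) are then comparatively routine given Theorems \ref{th:existence}, \ref{th:main} and Lemma \ref{lemma:convexity}.
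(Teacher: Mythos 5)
Your proposal is correct, and its skeleton coincides with the paper's: both are guess-and-verify arguments in which the explicit candidate $\hat\alpha(t)=\int_0^t\hat\eta(s)\,ds$ is shown to satisfy the first-order optimality condition, which is sufficient by the strict convexity of $\Phi$ (Lemma \ref{lemma:convexity}), so that $\hat\alpha=\alpha_{opt}$ by uniqueness (Theorem \ref{th:existence}); likewise, both obtain part (ii) and the formulas \eqref{eq:shat}, \eqref{eq:optval} by falling back on Theorem \ref{th:main} (the paper packages this as Lemma \ref{cor:pos0}). Where you genuinely diverge is in how the verification is carried out. The paper (Lemma \ref{lemma:pos1}) checks the condition in the form of Lemma \ref{lemma:bh1} --- $\psi[\hat\alpha]$ must be constant, since $\Omega(\hat\alpha)=\Real$ --- and does so by brute force: it writes $e^{\hat\alpha(t)+\delta t}$ in closed form, observes that $c(r)e^{\hat\alpha(r)+\delta r}$ and $w(r)e^{-\hat\alpha(r)-\delta r}$ are then perfect derivatives, integrates both terms of \eqref{eq:h} explicitly, and concludes $h[\hat\alpha]\equiv 0$. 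You instead introduce the adjoint quantity $P$ and verify the identities $\mathcal{S}[\hat\alpha]=\lambda\sqrt{c/w}$ and $P=\lambda\sqrt{w/c}$ by substituting the ansatz into the forward equation $S'=c-(\hat\eta+\delta)S$ and the adjoint equation $P'=(\hat\eta+\delta)P-w$ and invoking uniqueness of $T$-periodic solutions of scalar linear equations with coefficient of positive mean; since $h[\hat\alpha]=w\,\mathcal{S}[\hat\alpha]-c\,P$, this reaches the same conclusion by a cleaner route that avoids the antiderivative computation and, as a bonus, explains structurally where the $\sqrt{wc}$ form of the optimizer comes from. Two small caveats, both of which you share with the paper rather than introduce: the asserted equivalence $\bar\eta>\bar\eta_m\Leftrightarrow\hat\eta>0$ everywhere is really only the implication from left to right (the converse can fail when the supremum in \eqref{eq:etam} is not attained), but only that direction is ever used; and in part (ii) the contradiction with $\eta_{opt}\ge 0$ {\it{a.e.}} requires $\hat\eta<0$ on a set of positive measure rather than at a single point, which holds once the supremum in \eqref{eq:etam} is read as an essential supremum --- the natural reading, since $\left[\ln\frac{c}{w}\right]'$ is only defined almost everywhere.
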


The proof of Theorem \ref{th:pos} will
be given in Subsection \ref{sec:complete}.

The above theorem thus shows that when the total amount of effort that can be expended per period ($\bar{\eta}T$) is sufficiently large, the optimal solution will involve expending effort at all times, while if the limitation on the 
total effort is sufficiently stringent, the optimal solution will include some 
`holidays' in which effort is not expended. It should be stressed that in the latter case, while on its support the function $\eta_{opt}$ will be equal to the function 
$\eta^*$ given by \eqref{eq:etaopt1},
it is definitely 
{\it{not}} true that its support 
is equal to the set of points where
$\eta^*$ is positive, as can be observed in Figure \ref{fig:1}.

\begin{example}
In the case
$c(t)=1-0.9\cos(t),w(t)\equiv 1,\delta=1$
shown in Figure \ref{fig:1}, we can calculate, using \eqref{eq:etam},
that $\bar{\eta}_m\approx 16.37$, consistent with the results presented in the figure.
\end{example}


With regard to the expression
\eqref{eq:optval} for $\Phi_{\min}$, let us note that applying the Cauchy-Schwartz inequality we have
\begin{equation*}
\begin{split}
\Phi_{\min}&=\frac{1}{\bar{\eta}+\delta}\cdot\frac{1}{T}\cdot\left(\int_0^T  \sqrt{w(t)c(t)} dt\right)^2\\&\leq\frac{1}{\bar{\eta}+\delta}\cdot\frac{1}{T}
\left(\int_0^T w(t)dt\right)\left(\int_0^T c(t)dt\right)=\left(\int_0^T w(t)dt\right)\frac{\bar{c}}{\bar{\eta}+\delta},
\end{split}\end{equation*}
with equality if and only $\frac{c(t)}{w(t)}$ is a constant function - in which case $\eta_{opt}=\bar{\eta}$ is constant. In particular,  assuming $w$ is constant,
the right-hand side of the inequality
is the value achieved when $c(t)=\bar{c}$ is constant. This shows that, assuming $\bar{\eta}>\bar{\eta}_m$, a periodic inflow $c(t)$ allows us to achieve a lower value of $\Phi$, compared to a constant inflow with the
same mean $\bar{c}$, {\it{provided}}
that we choose the effort profile
$\eta(t)$ in an optimal way.

\subsection{Dependence of the optimizer on $\bar{\eta}$ - limits of high and low total effort}
\label{sec:lh}

The constraint $\alpha\in\mathcal{M}(T,\bar\eta)$ in
Problem \ref{prob:main} requires that the total effort expended per period is $T\bar{\eta}$. It is thus of interest to examine how the value of $\bar{\eta}$ affects the optimal effort profile. To do so, we will now denote 
by $\alpha_{opt,\bar{\eta}}(t)$ the solution of Problem \ref{prob:main} corresponding to a specific value $\bar{\eta}$.

In the case where $\bar{\eta}\rightarrow \infty$, 
we know from Theorem \ref{th:pos} that, under its assumptions, the solution is given by \eqref{eq:in1}, \eqref{eq:hateta} for $\bar{\eta}>\bar{\eta}_m$. This immediately implies 
\begin{proposition}
Under the assumptions of Theorem 
\ref{th:pos} we have
$$\lim_{\bar{\eta}\rightarrow \infty}\frac{\alpha_{opt,\bar{\eta}}(t)}{T\bar{\eta}}=\frac{\int_0^t \sqrt{w(s)c(s)}ds}{\int_0^T \sqrt{w(s)c(s)}ds}$$
uniformly in $t$.
\end{proposition}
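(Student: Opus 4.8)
The plan is to treat this as a direct computation from the explicit formula of Theorem \ref{th:pos}, the only genuine work being the verification that the convergence is uniform over the entire real line rather than merely on bounded sets.

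First I would fix $\bar{\eta}>\bar{\eta}_m$, so that Theorem \ref{th:pos}(i) applies and $\alpha_{opt,\bar{\eta}}(t)=\int_0^t \hat{\eta}(s)\,ds$ with $\hat{\eta}$ given by \eqref{eq:hateta}. Integrating term by term, and noting that the contribution $-\tfrac12[\ln(c/w)]'$ integrates to a telescoping boundary term, I obtain, writing $I\doteq\int_0^T\sqrt{w(s)c(s)}\,ds$,
\[
\alpha_{opt,\bar{\eta}}(t)=\frac{(\bar{\eta}+\delta)T}{I}\int_0^t\sqrt{w(s)c(s)}\,ds-\frac12\left[\ln\frac{c(t)}{w(t)}-\ln\frac{c(0)}{w(0)}\right]-\delta t.
\]
Introducing the candidate limit $g(t)\doteq \frac{1}{I}\int_0^t\sqrt{w(s)c(s)}\,ds$ and dividing by $T\bar{\eta}$ gives
\[
\frac{\alpha_{opt,\bar{\eta}}(t)}{T\bar{\eta}}=\frac{\bar{\eta}+\delta}{\bar{\eta}}\,g(t)-\frac{1}{2T\bar{\eta}}\left[\ln\frac{c(t)}{w(t)}-\ln\frac{c(0)}{w(0)}\right]-\frac{\delta t}{T\bar{\eta}}.
\]

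The second step is to subtract $g(t)$ and collect terms, obtaining
\[
\frac{\alpha_{opt,\bar{\eta}}(t)}{T\bar{\eta}}-g(t)=\frac{\delta}{\bar{\eta}}\left(g(t)-\frac{t}{T}\right)-\frac{1}{2T\bar{\eta}}\left[\ln\frac{c(t)}{w(t)}-\ln\frac{c(0)}{w(0)}\right].
\]
The apparent difficulty here is that $g(t)$ and $t$ each grow linearly in $t$, so neither $\frac{\delta}{\bar{\eta}}g(t)$ nor $\frac{\delta t}{T\bar{\eta}}$ is uniformly small over $\Real$. The key observation, which is the heart of the argument, is that these linear growths cancel: since $\sqrt{wc}$ is $T$-periodic with mean $I/T$, the function $g(t)-t/T=\frac{1}{I}\int_0^t\big(\sqrt{w(s)c(s)}-I/T\big)\,ds$ is itself $T$-periodic, hence bounded on $\Real$; likewise $\ln(c(t)/w(t))$ is $T$-periodic and, by Assumption \ref{a:reg}(iii) together with continuity, bounded. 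Consequently the right-hand side is a $\frac{1}{\bar{\eta}}$-multiple of bounded periodic quantities, and is therefore $O(1/\bar{\eta})$ uniformly in $t\in\Real$.

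Finally, taking the supremum over $t$ and letting $\bar{\eta}\to\infty$ yields the claimed uniform limit. I expect the only subtle point to be the uniformity over the unbounded domain, and this is resolved entirely by the cancellation of the linear-in-$t$ terms, which in turn reflects the compatibility $g(t+T)=g(t)+1$ forced by the quasi-periodicity \eqref{eq:aper} of $\alpha$. No compactness or regularity beyond Assumption \ref{a:reg} is required.
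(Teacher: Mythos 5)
Your proposal is correct and follows the same route as the paper, which derives the Proposition as an immediate consequence of the explicit formula \eqref{eq:in1}, \eqref{eq:hateta} of Theorem \ref{th:pos}; your computation simply spells out the details behind that ``immediately implies,'' including the uniformity over $\Real$ via the $T$-periodicity (hence boundedness) of $g(t)-t/T$ and of $\ln(c(t)/w(t))$.
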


We now consider the case in which the total allowed effort per period is small, $\bar{\eta}\rightarrow 0$. 
Examining Figure \ref{fig:1}, we observe that for small $\bar{\eta}$
the active set $\Omega(\alpha_{opt,\bar{\eta}})$ becomes 
a small interval, so all effort is concentrated within a short time duration in each period. The next result, which is proved in Subsection \ref{sec:e0} shows that this is indeed the case in general, and explicitly identifies the location of the small time-interval in which effort is concentrated.

\begin{theorem}\label{prop:eta0}
	Under Assumption \ref{a:reg}, define the function
\begin{equation}\label{eq:deff}
	f(t)=\int_0^t c(r)e^{\delta r}dr\int_t^T w(r)e^{-\delta r } dr-e^{-\delta T}\int_0^t w(r)e^{-\delta r }dr\int_t^T c(r)e^{\delta r}dr,
	\end{equation}
	which is $T$-periodic. Let 
	$$f_{\max}=\max_{t\in \Real} f(t),$$
	and, for $\epsilon>0$,
	$$M_{\epsilon}=\{ t\in \Real \;|\; f(t)>f_{\max}-\epsilon\}.$$
	Then, for any $\epsilon>0$, there 
	exists a value $\bar{\eta}(\epsilon)$
	such that 
	$$\bar{\eta}\in (0,\bar{\eta}(\epsilon))\quad\Rightarrow\quad \Omega(\alpha_{opt,\bar{\eta}})\subset M_\epsilon.$$
\end{theorem}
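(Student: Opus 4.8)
The plan is to combine the first-order optimality condition from Subsection~\ref{sec:foc} with the trivial fact that the optimizer itself collapses to zero as $\bar\eta\to0$. The geometric picture is that the support of $\mu_{opt}$ lies exactly on the argmax set of a ``potential'' $H_{\bar\eta}$ built from the directional derivative of $\Phi$ at $\alpha_{opt,\bar\eta}$; this potential converges uniformly to a positive multiple of $f$, so its argmax set is squeezed into any prescribed neighborhood $M_\epsilon$ of $\arg\max f$.

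First I would record the directional derivative. Writing $P_\alpha(t,r)=w(t)K(t,r)c(r)e^{\alpha(r)-\alpha(t)+\delta(r-t)}$, the computation in the proof of Lemma~\ref{lemma:convexity} gives, for $v=\tilde\alpha-\alpha$ with $\tilde\alpha\in\mathcal{M}(T,\bar\eta)$,
\[
\Phi'[\alpha]\cdot v=\int_0^T\!\!\int_0^T P_\alpha(t,r)(v(r)-v(t))\,dr\,dt=\int_0^T v(s)h_\alpha(s)\,ds,\qquad h_\alpha(s)=\int_0^T P_\alpha(t,s)\,dt-\int_0^T P_\alpha(s,r)\,dr.
\]
Since $v(0)=v(T)=0$, integrating by parts gives $\Phi'[\alpha]\cdot v=\int_0^T H_\alpha\,d(\alpha-\tilde\alpha)$ with $H_\alpha(s)=\int_0^s h_\alpha$. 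By convexity (Lemma~\ref{lemma:convexity}) the condition $\Phi'[\alpha_{opt}]\cdot(\tilde\alpha-\alpha_{opt})\ge0$ for all feasible $\tilde\alpha$ is necessary and sufficient, and since $\int H_{opt}\,d\tilde\alpha\le T\bar\eta\,\max H_{opt}$ it is equivalent to $\int(\max H_{opt}-H_{opt})\,d\mu_{opt}=0$, that is $\Omega(\alpha_{opt,\bar\eta})\subseteq\{\,H_{opt}=\max_{[0,T]}H_{opt}\,\}$ (the right-hand set being closed since $H_{opt}$ is continuous). This is the form of the condition I will use.

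Next come two convergence facts. By monotonicity and the mass constraint, $0\le\alpha_{opt,\bar\eta}(t)\le T\bar\eta$ on $[0,T]$, so $\alpha_{opt,\bar\eta}\to0$ uniformly as $\bar\eta\to0$; moreover $K(t,r)\to K_0(t,r)$ uniformly. A direct computation of $h_\alpha$ at $\alpha\equiv0$ then yields $h_0(s)=f'(s)/(1-e^{-\delta T})$, whence $H_0(s)=f(s)/(1-e^{-\delta T})=:c_0 f(s)$ using $f(0)=0$ and the definition~\eqref{eq:deff}. Together with the uniform continuity of the integrands in $\alpha$ and $\bar\eta$ (all exponents ranging over a compact set, $c,w$ continuous on $[0,T]$), this gives $H_{\bar\eta}:=H_{\alpha_{opt,\bar\eta}}\to c_0 f$ uniformly on $[0,T]$. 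Finally, for the localization, fix $\epsilon>0$ and a maximizer $t^*$ of $f$, and choose $\bar\eta(\epsilon)$ so that $\bar\eta<\bar\eta(\epsilon)$ forces $\|H_{\bar\eta}-c_0 f\|_\infty\le c_0\epsilon/4$. For $s\in\Omega(\alpha_{opt,\bar\eta})$ the first-order condition gives $H_{\bar\eta}(s)=\max H_{\bar\eta}\ge H_{\bar\eta}(t^*)$, hence
\[
c_0 f(s)\ge H_{\bar\eta}(s)-\tfrac{c_0\epsilon}{4}\ge H_{\bar\eta}(t^*)-\tfrac{c_0\epsilon}{4}\ge c_0 f(t^*)-\tfrac{c_0\epsilon}{2}=c_0\Bigl(f_{\max}-\tfrac{\epsilon}{2}\Bigr),
\]
so $f(s)>f_{\max}-\epsilon$, i.e. $s\in M_\epsilon$; periodicity of $\Omega$, $f$ and $M_\epsilon$ extends the inclusion to all of $\Real$.

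I expect the main obstacle to be the uniform-convergence step: verifying the explicit identity $h_0=f'/(1-e^{-\delta T})$ (a bookkeeping computation of the two integral terms of $h_\alpha$ at $\alpha=0$, matching them against the derivative of~\eqref{eq:deff}), and then controlling $\|H_{\bar\eta}-c_0 f\|_\infty$ uniformly as both $\alpha_{opt,\bar\eta}\to0$ and $K\to K_0$. The remaining pieces — the smallness of the optimizer (immediate from monotonicity) and the final $\epsilon$-localization — are routine, and the first-order condition itself is supplied by Subsection~\ref{sec:foc}.
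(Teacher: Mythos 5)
Your proposal is correct and takes essentially the same route as the paper: your potential $H_\alpha$ is exactly the paper's $\psi[\alpha]$ from \eqref{eq:psi}, your support-in-argmax condition is precisely Lemma \ref{lemma:bh1}, and your uniform convergence $H_{\bar\eta}\to c_0 f$ (deduced from $0\le\alpha_{opt,\bar\eta}\le T\bar\eta$ on $[0,T]$) is the paper's \eqref{eq:pq3}, up to the harmless choice of normalizing by the constant $c_0=1/(1-e^{-\delta T})$ rather than by the $\bar\eta$-dependent factor $(1-e^{-[\bar\eta+\delta]T})$. The only difference is cosmetic: the paper closes with a compactness/contradiction argument (extracting a convergent subsequence $t_k\to t^*$), whereas you give a direct quantitative $\epsilon/4$-estimate at a fixed maximizer $t^*$ of $f$; both rest on the same two ingredients.
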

Note that, generically, the function $f$ defined by \eqref{eq:deff} will have a unique global maximum point $t_{max}$ in $[0,T)$,
and as $\epsilon\rightarrow 0$ the
set $M_{\epsilon}$ will consist of 
small intervals containing the points $t_{max}+kT$ ($k\in\Integer$).

\begin{example}
In Example \ref{ex:1}, an explicit computation and numerical maximization of the function $f$, shows that $t_{max}\approx 0.725$, consistent with the results seen in Figure \ref{fig:1}.
\end{example}

\subsection{The effect of discontinuities in $c(t)$, $w(t)$}
\label{sec:disc}
In Theorem~\ref{th:main}, continuity of the data
($c(t),w(t)$) was assumed.  Here, we relax this assumption and investigate the implications of discontinuities in these data. As we shall see, some such discontinuities can lead to discontinuity of the optimizer 
$\alpha_{opt}$, which corresponds to atomic (delta-function) components of the optimizing measure $\mu_{opt}$, so that in this case the measure is no longer 
absolutely continuous.

We relax Assumption \ref{a:reg} and
make the following

\begin{assumption}\label{a:pc}
	
\begin{itemize}
\item[(i)] The right and left-hand limits
$c(t\pm),w(t\pm)$ exist at every point $t\in \Real$, with
\begin{equation}
\label{eq:pp1}
c(t\pm)>0,\quad w(t\pm)>0,\quad\forall t.
\end{equation}

\item[(ii)] Defining
\begin{equation}\label{eq:varphi}
\varphi(t)=\frac{c(t)}{w(t)},
\end{equation}
and defining the set 
$$D=\{ t \;|\; \varphi(t+)\neq \varphi(t-)\}$$ 
of discontinuity points of $\varphi$ (which are necessarily points of discontinuity of at least one of the functions $w(t),c(t)$), we have that the set $D\cap [0,T)$ is finite.

\item[(iii)] $c(t),w(t)$ are differentiable at all points $t$
of the open set $U=\Real\setminus D$ except for a countable set, and absolutely continuous on every closed interval $[a,b]\subset U$.
\end{itemize}
\end{assumption}

We divide the set of discontinuity 
points into two subsets: the upward jumps $D_+$ and the downward jumps $D_-$:
\begin{equation}\label{eq:D}
D_+=\{ t \;|\; \varphi(t+)> \varphi(t-)\},\quad D_-=\{ t \;|\; \varphi(t+)< \varphi(t-)\}.
\end{equation}

\begin{theorem}\label{th:discont}
Under Assumption \ref{a:pc},
$\alpha_{opt}$ is differentiable at all points of $U=\Real\setminus D$ apart from a countable set, is 
absolutely continuous on every closed sub-interval of $U$, and we have:

(i) For {\it{a.e.}} $t\in U$:
	\begin{equation}\label{eq:ef1}\alpha_{opt}^\prime(t)=\begin{cases}
		\frac{1}{\lambda}\cdot \sqrt{w(t)c(t)} -\frac{1}{2} \left[\ln \frac{c(t)}{w(t)} \right]'-\delta & t\in U\cap \Omega(\alpha_{opt})\\
		0 & t\in U,\;t\not\in \Omega(\alpha_{opt})
	\end{cases},\end{equation}
where $\lambda>0$ is a constant.

(ii) If $t\in D_+$ then $t\not\in 
\Omega(\alpha_{opt})$.

(iii) If $t\in D_-$ and
if $t$ is a limit point of the set
of $\Omega(\alpha_{opt})$ both from the right and from the left, that is
\begin{equation}\label{eq:ts}t\in \overline{\Omega(\alpha_{opt})\cap (t,\infty)}\cap \overline{\Omega(\alpha_{opt})\cap (-\infty,t)},\end{equation}
then $t\in \Omega(\alpha_{opt})$ and $\alpha_{opt}$ has a jump discontinuity at $t$, given by 
\begin{equation}\label{eq:js}
	\alpha_{opt}(t+)-\alpha_{opt}(t-)
	=\frac{1}{2}\ln\left(\frac{\varphi(t-)}{\varphi(t+)} \right)>0.
\end{equation}
At such point, the function 
$S_{opt}={\cal{S}}[\alpha_{opt}]$
has a downward jump discontinuity, with
\begin{equation}\label{eq:sjump}
\frac{S(t+)}{S(t-)}=\sqrt{\frac{\varphi(t_+)}{\varphi(t-)}}<1.
\end{equation}
\end{theorem}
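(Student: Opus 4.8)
The engine for the whole theorem is the first-order optimality condition of Subsection \ref{sec:foc}, which, by the strict convexity of $\Phi$ (Lemma \ref{lemma:convexity}), is both necessary and sufficient. The plan is to extract all four conclusions from a single object. Computing the Gateaux derivative of $\Phi$ from \eqref{eq:dPhi} in an admissible direction $v=\tilde\alpha-\alpha_{opt}$ gives $D\Phi[\alpha_{opt}][v]=\int_0^T v(t)\,G(t)\,dt$ for a gradient density $G$; writing $v(t)=\nu([0,t))$ for the underlying variation measure $\nu$ and applying Fubini recasts this as $\int_0^T \Gamma\,d\nu$ with $\Gamma(s)=\int_s^T G(t)\,dt$. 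Testing against mass-transport variations $\nu=\delta_{t_2}-\delta_{t_1}$ with $t_1\in\Omega$ then yields the condition I will use throughout: there is a multiplier $\lambda_0$ with $\Gamma(t)\ge\lambda_0$ for all $t$ and $\Gamma(t)=\lambda_0$ on $\Omega=\Omega(\alpha_{opt})$. In particular every $t_0\in\Omega$ is a global minimizer of the continuous function $\Gamma$, so $\Gamma'(t_0-)\le 0\le\Gamma'(t_0+)$.

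For the regularity claims and part (i), I note that on the open set $U$ the data $c,w$ are exactly as regular as in Assumption \ref{a:reg}, and the continuity/differentiability/absolute-continuity arguments of Subsections \ref{sec:continuity}--\ref{sec:absolute} are purely local. I would therefore rerun them verbatim on every closed subinterval of $U$, obtaining that $\alpha_{opt}$ is differentiable off a countable set and absolutely continuous on such subintervals (recalling that a.e.\ differentiability alone is insufficient, cf.\ the Cantor function, so the countable-exceptional-set statement is essential). For the density itself I introduce the adjoint state $R(t)=e^{\alpha_{opt}(t)+\delta t}\int_0^T K(s,t)w(s)e^{-\alpha_{opt}(s)-\delta s}\,ds$, which satisfies $R'=(\alpha_{opt}'+\delta)R-w$, while $S=\mathcal{S}[\alpha_{opt}]$ satisfies $S'=c-(\alpha_{opt}'+\delta)S$. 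A direct computation gives $\Gamma'=wS-cR$, so on the interior of $\Omega\cap U$ the relation $\Gamma\equiv\lambda_0$ forces $cR=wS$, whence $(SR)'=cR-wS=0$ and $SR\equiv\lambda^2$ is constant; combining the two identities yields $S=\lambda\sqrt{c/w}$, which is \eqref{eq:cc}. Differentiating $cR=wS$ and eliminating $R,S$ through these identities produces \eqref{eq:etaopt1}/\eqref{eq:ef1}, the global constancy of $\lambda$ being inherited from the analysis of Theorem \ref{th:main} in Subsection \ref{sec:explicit}.

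The new phenomena live at a point $t_0\in D$ and rest on one elementary observation: at an atom of $\mu_{opt}$ of mass $j=\alpha_{opt}(t_0+)-\alpha_{opt}(t_0-)>0$, the factors $e^{\mp\alpha_{opt}(t)}$ carry the discontinuity while the integral factors defining $S$ and $R$ stay continuous in $t$, so $S(t_0+)/S(t_0-)=e^{-j}$, $R(t_0+)/R(t_0-)=e^{j}$, and $SR$ is continuous across $t_0$. For (ii), suppose $t_0\in D_+\cap\Omega$. Feeding the one-sided limits into $\Gamma'(t_0-)\le 0\le\Gamma'(t_0+)$, i.e.\ $w(t_0-)S(t_0-)\le c(t_0-)R(t_0-)$ and $w(t_0+)S(t_0+)\ge c(t_0+)R(t_0+)$, and substituting the jump ratios (or, on a side where $\Omega$ accumulates, the identities $S=\lambda\sqrt{\varphi}$, $R=\lambda/\sqrt{\varphi}$), I obtain $e^{2j}\varphi(t_0+)\le \varphi(t_0-)$, i.e.\ $e^{2j}\le\varphi(t_0-)/\varphi(t_0+)<1$; since $j\ge 0$ this is a contradiction, so $t_0\notin\Omega$. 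For (iii), a two-sided limit point $t_0\in D_-$ of $\Omega$ lies in $\Omega$ (closed), and $S=\lambda\sqrt{\varphi}$ holds along $\Omega$ from each side, giving $S(t_0\pm)=\lambda\sqrt{\varphi(t_0\pm)}$ with a common $\lambda$ (because $SR$ is continuous across the atom). If $\alpha_{opt}$ were continuous, $S$ would be continuous, forcing $\varphi(t_0+)=\varphi(t_0-)$, a contradiction; hence there is an atom, and $e^{-j}=S(t_0+)/S(t_0-)=\sqrt{\varphi(t_0+)/\varphi(t_0-)}$ gives \eqref{eq:sjump} and \eqref{eq:js}, with $j>0$ since $\varphi(t_0-)>\varphi(t_0+)$.

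The hard part is the bookkeeping of the local structure of $\Omega$ near $t_0$, where $c$, $w$, and possibly $\alpha_{opt}$ are all discontinuous at once. The clean ``match $S$ along $\Omega$ from both sides'' argument applies only when $\Omega$ accumulates at $t_0$ from both sides; the cases of one-sided accumulation and of an isolated atom flanked by holidays must be dispatched separately, and precisely there one must fall back on the sign inequalities $\Gamma'(t_0-)\le 0\le\Gamma'(t_0+)$ combined with the exact jump ratios of $S$ and $R$. Getting these to collapse uniformly into $e^{2j}\varphi(t_0+)\le\varphi(t_0-)$ for $D_+$, with the correct placement of the one-sided limits of $c,w,S,R$, is the delicate computational heart of the argument; a secondary subtlety is justifying that the multiplier in $S=\lambda\sqrt{\varphi}$ agrees on the two sides of $t_0$, which follows from the continuity of $SR$ across the atom but must be stated with care.
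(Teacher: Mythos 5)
Your framework is the paper's own argument in different notation, so let me first set up the dictionary: your $\Gamma$ is $-\psi[\alpha_{opt}]$ (the paper's Lemma \ref{lemma:bh1} says $\psi$ attains its maximum exactly on the support, i.e.\ your $\Gamma$ attains its minimum there); your adjoint state $R$ is the second factor in the paper's $h$, so that $\Gamma'=wS-cR$ is the identity $\psi'=-h$; your one-sided inequalities $\Gamma'(t_0-)\le 0\le \Gamma'(t_0+)$ at support points are Lemma \ref{lemma:htp}; your observation $(SR)'=cR-wS=-\Gamma'$ is the paper's key identity $SR=\psi+M$ (Lemma \ref{lemma:kid}), derived differentially rather than by direct computation; and your jump ratios $S(t_0+)/S(t_0-)=e^{-j}$, $R(t_0+)/R(t_0-)=e^{j}$ with $SR$ continuous across atoms are what the paper exploits through \eqref{eq:reg} and \eqref{eq:sopt0}. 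Parts (ii) and (iii), as you run them, reproduce the paper's proofs modulo this dictionary, and the regularity claims are, as you say, reruns of Subsections \ref{sec:continuity}--\ref{sec:absolute}, which is exactly how the paper proves them (Lemma \ref{lemma:am}).

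There is, however, one genuine gap as written: you establish the pointwise relation $cR=wS$ (hence $S=\lambda\sqrt{c/w}$) only on the \emph{interior} of $\Omega(\alpha_{opt})\cap U$, by arguing that $\Gamma$ is constant there. Nothing proved at that stage prevents $\Omega(\alpha_{opt})\cap U$ from having positive measure and empty interior (a Cantor-like support), in which case your argument yields \eqref{eq:ef1} only on a null set rather than a.e.\ on $U\cap\Omega(\alpha_{opt})$; likewise, in part (iii) the hypothesis \eqref{eq:ts} only provides sequences in $\Omega(\alpha_{opt})$ accumulating at $t_0$ from both sides, and these points need not be interior points of the support, so the matching ``$S=\lambda\sqrt{\varphi}$ along $\Omega$ from each side'' is not available as you derived it. The repair is already in your toolkit and is precisely what the paper does (Lemma \ref{lemma:ht0}(ii)): at \emph{every} $t\in\Omega(\alpha_{opt})\cap U$, continuity of $c,w$ at $t$ together with continuity of $\alpha_{opt}$ at $t$ (from the continuity lemma) makes $\Gamma$ differentiable at $t$ with $\Gamma'(t)=w(t)S(t)-c(t)R(t)$, and since $t$ is a global minimizer of $\Gamma$, your two one-sided inequalities collapse to the equality $\Gamma'(t)=0$, i.e.\ $wS=cR$ pointwise on all of $\Omega(\alpha_{opt})\cap U$ --- which is what both (i) and (iii) require. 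A secondary deferral can also be closed internally: the global constancy of $\lambda$ across different components of the support, which you ``inherit'' from Theorem \ref{th:main}, follows from your own identity, since $SR+\Gamma$ is absolutely continuous with a.e.\ zero derivative, hence constant, and $\Gamma\equiv\lambda_0$ on $\Omega(\alpha_{opt})$, so $SR$ takes one and the same value on the entire support; continuity of $SR$ across an atom alone would not suffice, because $SR$ is not constant on the holidays separating components.
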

The proof of Theorem \ref{th:discont}
will be given in Subsection \ref{sec:jumps}.

The above theorem implies that the measure 
$\mu_{opt}$ has 
both an absolutely continuous component, given by \eqref{eq:ef1}, and, possibly, an atomic component supported at points of $D_-$.
It should be noted that the
characterization of the discontinuity points of
$\alpha_{opt}$ given in the above theorem is incomplete.
The theorem does not determine when the condition \eqref{eq:ts} - which says that $t$ is a limit point of points in 
$\Omega(\alpha_{opt})$ both from the right and from the left - holds. Moreover,
it does not determine what happens at points of 
$t\in D_-$ for which
\eqref{eq:ts} does not hold.

To illustrate the results of Theorem 
\ref{th:discont}, as well as to obtain details of the structure of the optimizer which the theorem does not provide and observe some interesting phenomena, we carry out several numerical investigations, using discontinuous inflow rate functions $c(t)$.

\begin{figure}
	\begin{center}
		\includegraphics[width=1\linewidth]{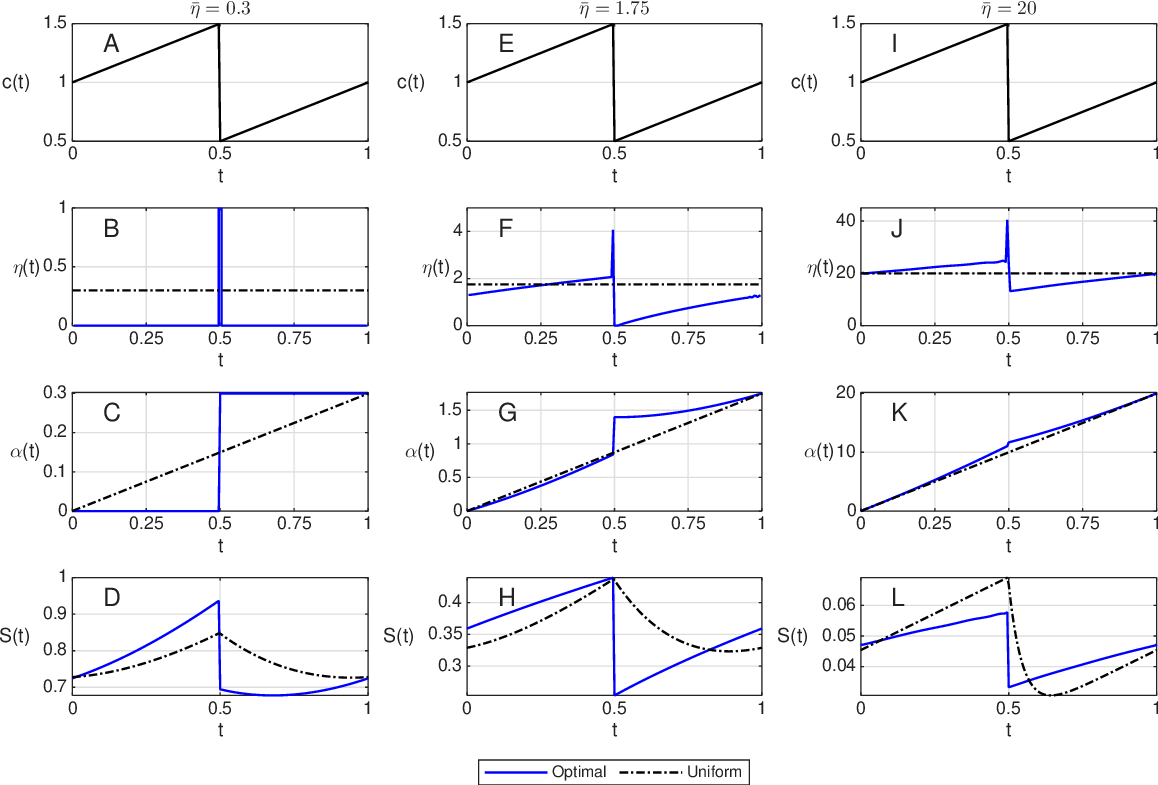}
	\end{center}
	\caption{Optimal effort profile ($\eta_{opt}(t)$), cumulative effort
		profile ($\alpha_{opt}(t)$) and the resulting function $S_{opt}(t)$
		for inflow rate $c(t)$ given by \eqref{eq:ceeo}, weight $w(t)\equiv 1$, $\delta=1$,  and three values of $\bar{\eta}$.}
	
	\label{fig:2}
\end{figure}

\begin{figure}
	\begin{center}
		\includegraphics[width=1\linewidth]{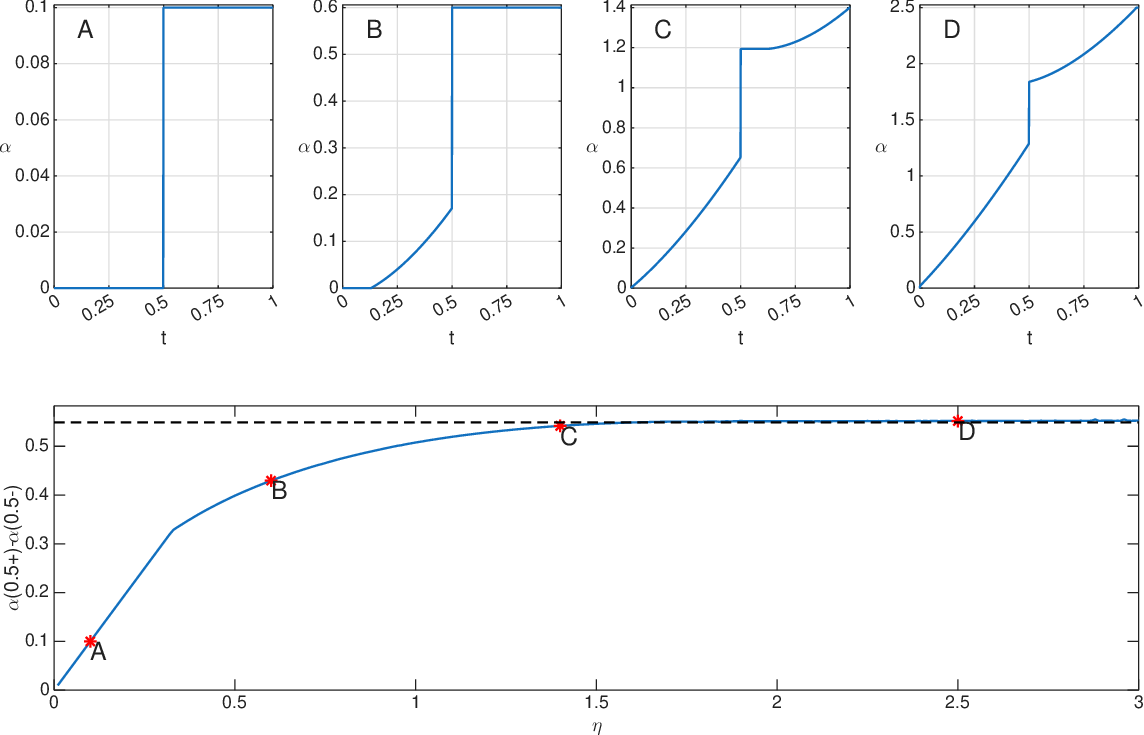}
	\end{center}
	\caption{Optimal cumulative effort
		profile ($\alpha_{opt}(t)$) 
		for inflow rate given by \eqref{eq:ceeo}, weight $w(t)\equiv 1$, $\delta=1$,  and four values of $\bar{\eta}$. The bottom panel shows the size of the jump in $\alpha_{opt}$ occurring at $t=0.5$.}
	\label{fig:2.5}
\end{figure}

\begin{example}
In figure \ref{fig:2} we take the
inflow rate function $c(t)$ as the `sawtooth' function which is the
$1$-periodic extension of 
\begin{equation}\label{eq:ceeo}
	c(t)=\begin{cases}
		t+1 & 0\leq t<0.5\\
		t & 0.5 \leq t<1.
	\end{cases}
\end{equation}
We take $w(t)\equiv 1$, so that
$\varphi(t)=c(t)$, and $\delta=1$.
\end{example}
The discontinuity
points of $\varphi$, of the two types, are
$$D_+=\emptyset,\quad D_-=\{0.5+k\;|\; k\in \Integer\}.$$
For all three values of 
$\bar{\eta}$ displayed, we observe that the solution 
$\alpha_{opt}$ has a jump
discontinuity at the discontinuity point $t=0.5$ of 
$c$, which corresponds to a 
$\delta$-function component at this point for the corresponding measure $\mu_{opt}$. 
We also observe two interesting transitions in the shape of the optimizer.
These may be examined more closely in Figure \ref{fig:2.5}, whose bottom panel
displays the size of the jump discontinuity of $\alpha_{opt}$ at $t=0.5$ (that is the mass of the $\delta$-function component of $\mu_{opt}$ located at this point) as a function of $\bar{\eta}$, and whose upper part shows the
$\alpha_{opt}$ profiles corresponding to
the four points marked point on the graph.
When $\bar{\eta}$ is sufficiently small, as in $A$, we have $\Omega(\alpha_{opt})\cap [0,1)=\{0.5\}$, that is the 
optimizing measure consists of pure $\delta$-functions at the points of $D_-$. For a range of intermediate values of $\bar{\eta}$, as in $B,C$, the measure has both atomic components and an absolutely continuous one, with
$\Omega(\alpha_{opt})\cap [0,1)$ being an interval whose right end-point is $t=0.5$, at which the $\delta$-function is located. For yet larger values of $\bar{\eta}$, as in $D$, $t=0.5$ becomes an interior point of $\Omega(\alpha_{opt})\cap [0,1)$. It is only in this third case that the condition \eqref{eq:ts} holds, which implies the validity of the formula \eqref{eq:js} for the 
size of jump in $\alpha$ (that is, the mass of the atomic component) at $t=0.5$, which gives
$$\alpha_{opt}(0.5+)-\alpha_{opt}(0.5-)=
\frac{1}{2}\ln\left(\frac{c(0.5-)}{c(0.5+)} \right)=\frac{1}{2}\ln(3)\approx 0.549,$$
the value shown as a dashed horizontal line in the bottom graph of Figure \ref{fig:2.5}.
We note that our analytic results do not
provide an expression for the size of the jump in the cases $B,C$, nor for the minimal value of $\bar{\eta}$ for which \eqref{eq:ts} holds.

\begin{figure}
	\begin{center}
		\includegraphics[width=1\linewidth]{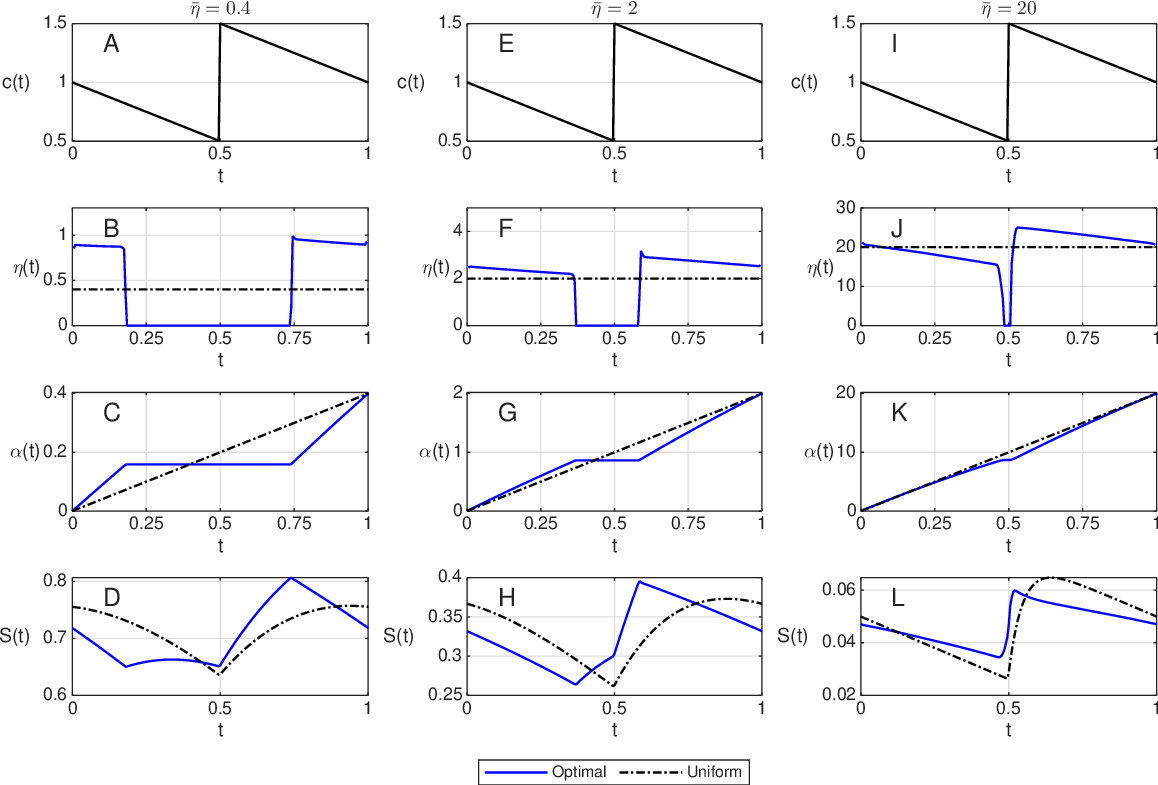}
	\end{center}
	\caption{Optimal effort profile ($\eta_{opt}(t)$), cumulative effort
		profile ($\alpha_{opt}(t)$) and the resulting function $S_{opt}(t)$
		for inflow rate $c(t)$ given by \eqref{eq:ceeo1}, weight $w(t)\equiv 1$, $\delta=1$,  and three values of $\bar{\eta}$. }
	\label{fig:3}
\end{figure}

\begin{example}
In Figure \ref{fig:3} we take the
inflow rate function $c(t)$ as the 
$1$-periodic extension of the function
\begin{equation}\label{eq:ceeo1}
	c(t)=\begin{cases}
		1-t & 0\leq t<0.5\\
		2-t & 0.5 \leq t<1,
	\end{cases}
\end{equation}
\end{example}
In this case
$$D_+=\{0.5+k\;|\; k\in \Integer\},\qquad D_-=\emptyset.$$
In agreement with part (ii)
of Theorem \ref{th:discont}, we have
$0.5\not\in \Omega(\alpha_{opt})$, and since $c(t)$ 
is continuous for $t\neq D_+$, the optimizing measure does not have an atomic component.

\begin{figure}
	\begin{center}
		\includegraphics[width=1\linewidth]{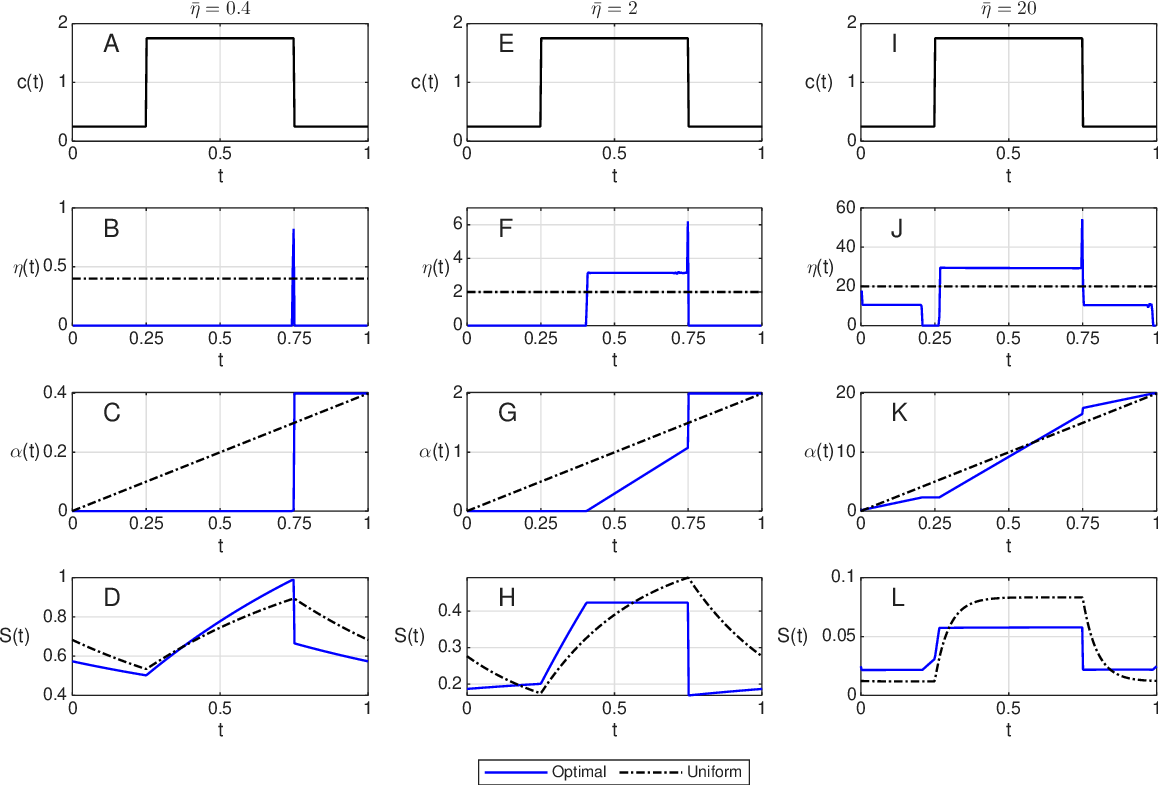}
	\end{center}
	\caption{Optimal effort profile ($\eta_{opt}(t)$), cumulative effort
		profile ($\alpha_{opt}(t)$) and the resulting function $S_{opt}(t)$
		for inflow rate $c(t)$ given by \eqref{eq:cee}, weight $w(t)\equiv 1$, $\delta=1$,  and three values of $\bar{\eta}$. }
	\label{fig:4}
\end{figure}

\begin{example}
In Figure \ref{fig:4} we take the 
inflow rate function $c(t)$ as the 
$1$-periodic extension of the function
\begin{equation}\label{eq:cee}
c(t)=\begin{cases}
0.25 & 0\leq t<0.25\\
1.75 & 0.25\leq t<0.75\\
0.25 & 0.75\leq t<1
\end{cases}
\end{equation}
We take $w(t)\equiv 1$, so that
$\varphi(t)=c(t)$.
\end{example}
The discontinuity
points of $\varphi$, of the two types, are
$$D_+=\{0.25+k\;|\; k\in \Integer\},\quad D_-=\{0.75+k\;|\; k\in \Integer\}.$$
Part (ii) of Theorem \ref{th:discont} 
tells us that $0.25\not\in \Omega(\alpha_{opt})$, and in figure
\ref{fig:4} we can see that this is indeed the case for the three values of $\bar{\eta}$ 
for which results are displayed: there is 
always an interval around the point $t=0.25$
in which 
$\alpha_{opt}(t)$ is constant. Note that the length of this interval decreases as 
$\bar{\eta}$ increases. 
At the second discontinuity point $t=0.75$, the optimizing measure always has an atomic ($\delta$-function) component
located at $t=0.75$, which 
generates a jump discontinuity in the graph of $\alpha_{opt}$. 
For low values of 
$\bar{\eta}$, (as panel A), the optimizing 
measure is the $1$-periodic extension of a pure $\delta$-function, located at $t=0.75$. For intermediate values of $\bar{\eta}$
the optimizing measure consists of both  atomic components at $t=0.75+k$ ($k\in \Integer$), and an absolutely continuous component, supported on  intervals
whose right end-points are $t=0.75+kT$ ($k\in \Integer$), which expand as 
$\bar{\eta}$ increases. In these intervals,
the function $\eta_{opt}(t)$
is constant and positive, according to the
expression \eqref{eq:ef1}, and the fact that $c,w$ are constant in $[0.25,0.75]$. Thus in this regime, one starts expending effort during the period in which
inflow rate is high, ending with a
concentrated effort ($\delta$-function) when the inflow rate drops at $t=0.75$, subsequently going on a holiday. For high values of $\bar{\eta}$, the support $\Omega(\alpha_{opt})$ is an interval extending to the left and to the right of the point $t=0.75$, so that effort is expended both before and after time
$t=0.75$, at which a concentrated effort is made.
Note that only in this case the condition \eqref{eq:ts} holds, so that
part (iii) of Theorem implies that $t=0.75$ is a
discontinuity point of $\alpha_{opt}$, and the
size of the jump is given by
\eqref{eq:js}. 

In both Figure \ref{fig:2} 
and Figure \ref{fig:4}, we 
observe that for sufficiently low values of $\bar{\eta}$, the optimizing measure consists 
purely of $\delta$-functions located at the downward-jump points of $\varphi$. We can prove a general result showing that this phenomenon occurs under certain circumstances. 

\begin{proposition}\label{prop:sd}
	Assume that $w(t)\equiv 1$ and that there is a single point $t^*\in (0,T]$ such that $c(t)$ is strictly monotone increasing on $(0,t^*)$ and on $(t^*,T)$, and has a downward jump at $t^*$: 
	$c(t^*+)<c(t^*-)$. 
    Then, for sufficiently small $\bar{\eta}$, we have
\begin{equation}\label{eq:aj}\alpha_{opt}(t)=\begin{cases}
		0 & 0\leq  t<   t^*\\
		\bar{\eta}T & t^*\leq t\leq T,
	\end{cases}\end{equation}
	so that $\mu_{opt}$ consists 
	of $\delta$-functions located at $t^*+kT$ ($k\in \Integer$).
\end{proposition}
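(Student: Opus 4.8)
The plan is to guess the optimizer and verify it. Let $\alpha_*$ be the cumulative profile on the right-hand side of \eqref{eq:aj}, whose associated measure is $\mu_*=T\bar\eta\,\delta_{t^*}$ (periodically extended), and write $m=T\bar\eta$. Since $\Phi$ is strictly convex (Lemma \ref{lemma:convexity}), the first-order condition is sufficient for optimality; so if $\mu_*$ satisfies it, then $\mu_*$ is a minimizer, and by the uniqueness in Theorem \ref{th:existence} it equals $\mu_{opt}$, giving \eqref{eq:aj}. To make the condition explicit I would differentiate $\phi(\epsilon)=\Phi[(1-\epsilon)\alpha_*+\epsilon\tilde\alpha]$ at $\epsilon=0$ using \eqref{eq:des} and integrate by parts (the boundary terms vanish because $\tilde\alpha-\alpha_*$ vanishes at $0$ and $T$), obtaining $\phi'(0)=\int_0^T\Gamma\,d(\tilde\mu-\mu_*)$ with switching function $\Gamma(s)=\int_s^T\big[c(u)\Psi(u)-S_*(u)\big]\,du$, where $S_*=\mathcal{S}[\alpha_*]$ and $\Psi(s)=e^{\alpha_*(s)+\delta s}\int_0^T K(t,s)e^{-\alpha_*(t)-\delta t}\,dt$ (here $w\equiv1$). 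Convexity makes $\phi'(0)\ge0$ for all $\tilde\mu$ equivalent to $\mu_*$ minimizing $\mu\mapsto\int_0^T\Gamma\,d\mu$ over ${\cal M}(T,\bar\eta)$, i.e. to $\Gamma$ attaining its global minimum over $[0,T]$ on $\mathrm{supp}(\mu_*)=\{t^*\}$. Hence it suffices to show that, for small $m$, the $\Gamma$ built from $\mu_*$ has a (strict) global minimum at $t^*$.

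The backbone is the unperturbed solution $S_0=\mathcal{S}[0]$, which solves $S_0'=c-\delta S_0$. First I would show $S_0$ has a unique maximum at $t^*$: at any continuity point $t_0$ of $c$ a local maximum is impossible, since there $S_0'(t_0)=0$ forces $c(t_0)=\delta S_0(t_0)$, and then for $t\in(t_0,t_0+\epsilon)$ strict monotonicity gives $S_0'(t)=c(t)-\delta S_0(t)>c(t_0)-\delta S_0(t_0)=0$, contradicting a maximum. Thus the only possible maximum is at the lone discontinuity $t^*$, and it is a strict corner: $S_0'(t^*-)=c(t^*-)-\delta S_0(t^*)>0>c(t^*+)-\delta S_0(t^*)=S_0'(t^*+)$, i.e. $c(t^*+)<\delta S_0(t^*)<c(t^*-)$. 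The strictness uses that $t^*$ is the only discontinuity of $c$ and that $c$ is strictly increasing on each side, so that the exponentially weighted average $\delta S_0(t^*)=\int_0^\infty\delta e^{-\delta u}c(t^*-u)\,du$ lies strictly between the one-sided limits $c(t^*\pm)$.

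With $\alpha_*\equiv0$ one computes directly that $\Psi\equiv1/\delta$, so $\Gamma_0'(s)=S_0(s)-c(s)/\delta=-\tfrac1\delta S_0'(s)$ and $\Gamma_0=-\tfrac1\delta S_0+\mathrm{const}$; by the previous paragraph $\Gamma_0$ has a unique strict-corner global minimum at $t^*$, with $\Gamma_0'(t^*-)<0<\Gamma_0'(t^*+)$. The final step is a perturbation argument in $m$. As $m\to0$ we have $\alpha_*\to0$ uniformly, hence $S_*\to S_0$ and $\Psi\to1/\delta$ uniformly, so $\Gamma\to\Gamma_0$ uniformly and $\Gamma'\to\Gamma_0'$ uniformly on sets bounded away from $t^*$. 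Since $\Gamma_0$ has a unique minimum separated by a positive gap from its values off any neighborhood of $t^*$, the global minimum of $\Gamma$ lies in a small neighborhood of $t^*$ for small $m$; and since $\Gamma_0'$ is bounded away from $0$ on each side of $t^*$, the same holds for $\Gamma'$, so $\Gamma$ is strictly decreasing up to $t^*$ and strictly increasing just after.

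What remains, and is the main obstacle, is the behavior exactly at the corner $t^*$, where $\alpha_*$ jumps by $m$: this makes $S_*$ jump down by the factor $e^{-m}$ and $\Psi$ jump up by $e^{m}$, so the one-sided derivatives are $\Gamma'(t^*-)=S_*(t^*-)-c(t^*-)\Psi(t^*-)$ and $\Gamma'(t^*+)=e^{-m}S_*(t^*-)-c(t^*+)e^{m}\Psi(t^*-)$. Writing $\rho=S_*(t^*-)/\Psi(t^*-)\to\delta S_0(t^*)$, the required inequalities $\Gamma'(t^*-)\le0\le\Gamma'(t^*+)$ become $c(t^*+)e^{2m}\le\rho\le c(t^*-)$, which hold strictly for all small $m$ precisely because $c(t^*+)<\delta S_0(t^*)<c(t^*-)$; this simultaneously recovers the threshold $m\le\tfrac12\ln\!\big(c(t^*-)/c(t^*+)\big)$ consistent with the jump size \eqref{eq:js}. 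Combining the interior strict monotonicity with these corner inequalities shows $\Gamma$ has its unique global minimum at $t^*$, which by the reduction in the first paragraph proves $\mu_*$ is the optimizer. The delicate points to pin down are the uniform convergences $S_*\to S_0$ and $\Psi\to1/\delta$, and the strictness of $c(t^*+)<\delta S_0(t^*)<c(t^*-)$.
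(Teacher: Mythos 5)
Your proposal is correct, and at the top level it follows the same strategy as the paper: guess the optimizer and verify the first-order condition, which is sufficient by convexity (Lemma \ref{lemma:convexity}); indeed your switching function is just $\Gamma=-\psi[\alpha_*]$, so your criterion ``$\Gamma$ attains its global minimum on $\mathrm{supp}(\mu_*)$'' is exactly condition \eqref{eq:vm} of Lemma \ref{lemma:bh1}. Where you genuinely diverge is in how the condition is verified. The paper translates so that $t^*=T$, exploits the fact that $\alpha_0\equiv 0$ on $[0,T)$ to compute $\psi[\alpha_0]$ in closed form, and reduces optimality to the explicit inequality \eqref{eq:cx}, whose right-hand side is shown positive by the integral mean value theorem and the strict monotonicity of $c$; smallness of $\bar{\eta}$ thus comes with an explicit admissible range. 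You instead argue perturbatively around $\bar{\eta}=0$: you identify $\Gamma_0=-S_0/\delta+\mathrm{const}$, show the uncontrolled periodic solution $S_0$ has a unique corner maximum at $t^*$ with $c(t^*+)<\delta S_0(t^*)<c(t^*-)$, and conclude by uniform convergence plus a one-sided derivative analysis at the corner. Your route is longer but buys real insight: it explains \emph{why} effort concentrates at $t^*$ (it is the maximizer of $S_0$, which is consistent with Theorem \ref{prop:eta0}, since for $w\equiv 1$ one has $f=\frac{1-e^{-\delta T}}{\delta}S_0+\mathrm{const}$, so $f$ and $S_0$ share maximizers), it recovers the jump-size threshold \eqref{eq:js} as a consistency condition at the corner, and it makes transparent why the conclusion fails for \eqref{eq:ce2} (Figure \ref{fig:6}): there $\delta S_0(t^*)$ does not lie between $c(t^*\pm)$, so $S_0$ does not peak at the jump.

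Two points need tightening, both of which you flag and both of which can be repaired. First, you claim $\Gamma'\to\Gamma_0'$ uniformly only ``on sets bounded away from $t^*$''; taken literally this leaves a hole in the step ``$\Gamma$ is strictly decreasing up to $t^*$'', since for each fixed small $m$ you need a sign for $\Gamma'$ on a full one-sided neighborhood $(t^*-\epsilon_0,t^*)$, not merely at fixed positive distance from $t^*$. In fact the stronger statement holds: since $\|\alpha_*\|_\infty=T\bar{\eta}=m$ and the kernel $K$ depends continuously on $\bar{\eta}$, the explicit formulas give $\sup_s|S_*(s)-S_0(s)|=O(m)$ and $\sup_s|\Psi(s)-1/\delta|=O(m)$ on all of $[0,T]$ (the jumps of $S_*$ and $\Psi$ at $t^*$ are themselves $O(m)$), hence $\Gamma'=S_*-c\Psi$ satisfies $\Gamma'\leq -\kappa/(2\delta)$ on $(t^*-\epsilon_0,t^*)$ and $\Gamma'\geq \kappa/(2\delta)$ on $(t^*,t^*+\epsilon_0)$ for small $m$, which is all the interior-monotonicity step needs. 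Second, the strictness of $c(t^*+)<\delta S_0(t^*)<c(t^*-)$ does follow from the hypotheses: since $t^*$ is the only downward jump per period, the periodic extension satisfies $c(T-)\leq c(0+)$, whence almost every value of $c$ lies strictly between $c(t^*+)$ and $c(t^*-)$, and the exponentially weighted average $\delta S_0(t^*)=\int_0^\infty \delta e^{-\delta u}c(t^*-u)\,du$ inherits both strict inequalities. With these two points pinned down, your argument is complete.
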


Note that the example of 
Figure \ref{fig:2} satisfies the hypotheses of the above proposition. The proof of this result, to be given in Subsection \ref{sec:jumps}, is interesting in that we show that, under the stated conditions, the function 
given by \eqref{eq:aj} satisfies the first-order condition given by Lemma \ref{lemma:bh1}, which is necessary and sufficient for optimality. This demonstrates how, in case that we have a conjecture for the optimizer, we can verify it by using the first-order condition.
The condition that $c(t)0$ is monotone increasing apart 
from the point of discontinuity is essential in the above proposition, as is shown by

\begin{figure}
	\begin{center}
		\includegraphics[width=1\linewidth]{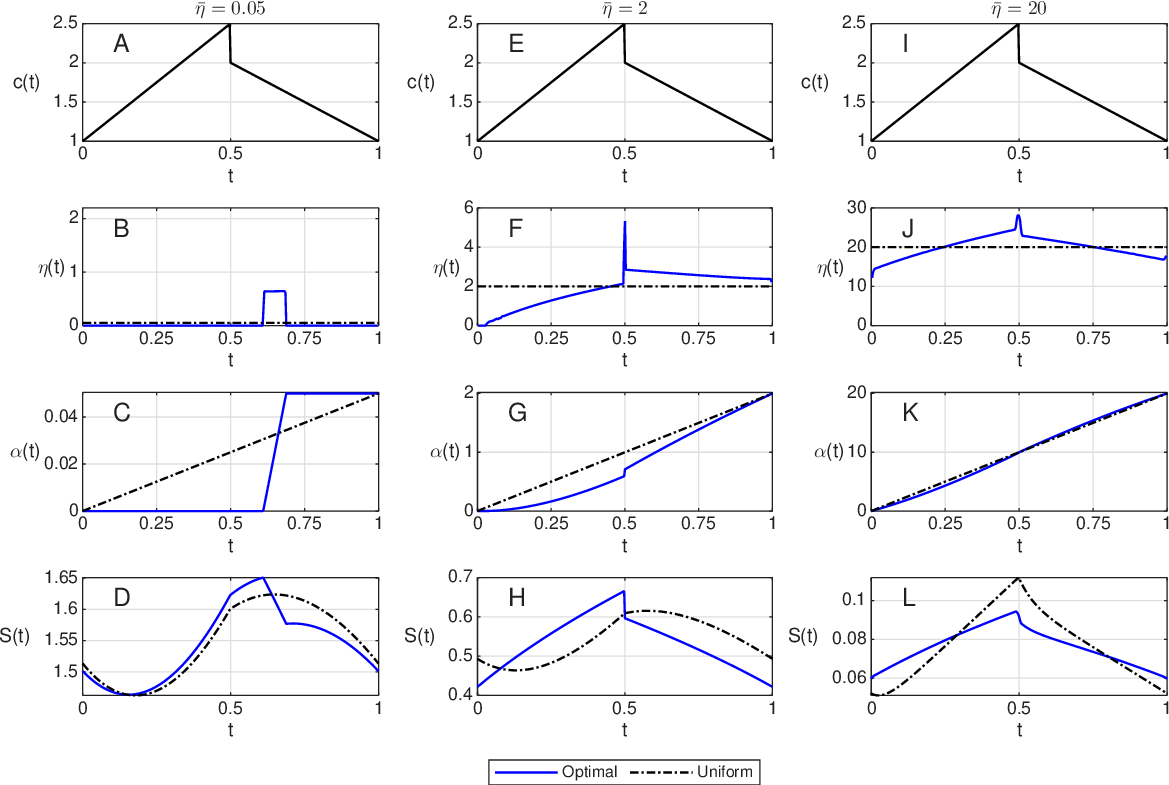}
	\end{center}
	\caption{Optimal effort profile ($\eta_{opt}(t)$), cumulative effort
		profile ($\alpha_{opt}(t)$) and the resulting function $S_{opt}(t)$
		for inflow rate $c(t)$ given by \eqref{eq:ce2}, weight $w(t)\equiv 1$, $\delta=1$,  and three values of $\bar{\eta}$. }
	\label{fig:6}
\end{figure}

\begin{example}
In  Figure \ref{fig:6} we take
\begin{equation}\label{eq:ce2}c(t)=\begin{cases}
	1+3t & 0\leq t\leq 0.5\\
	3-2t & 0.5\leq t\leq 1
\end{cases},
\end{equation}
and $w(t)\equiv 1$.
\end{example}
In this 
example, although $t^*=0.5$ is 
a downward jump point, we see that 
for $\bar{\eta}$ sufficiently small
the point $t^*$ is outside 
$\Omega(\alpha_{opt})$, so that $\alpha_{opt}$ is absolutely continuous. Thus the conclusion of 
Proposition \ref{prop:sd} does not hold. Note that for sufficiently large $\bar{\eta}$, $\Omega(\alpha_{opt})$ {\it{does}} include
$t^*$, and the optimizing measure then has a $\delta$-function component located at $t^*$. Thus here the atomic component is `born' at some positive value of $\bar{\eta}$

\section{Characterization of the optimizer: proofs}
\label{sec:proofs}

This section is devoted to the proof of
Theorems \ref{th:main}-\ref{th:discont}.
We assume throughout that
Assumption \ref{a:eu} (see Subsection \ref{sec:existence}) holds and impose further assumptions as needed.
We will present the arguments in a sequence
of steps in the following subsections.

\subsection{A first order condition for the optimum}
\label{sec:foc}

In the following we derive 
a first order condition satisfied by the 
solution $\alpha_{opt}$, which, in view of convexity, will turn out to be a necessary and sufficient condition for an
$\alpha\in {\cal{M}}(T,\bar{\eta})
$ to be a solution of Problem \ref{prob:main}. 

Assume $\alpha,\tilde{\alpha}\in {\cal{M}}(T,\bar{\eta})$.
Define, for $\epsilon\in [0,1]$,
$$\alpha_{\epsilon}(t)=
(1-\epsilon) \alpha(t)+\epsilon \tilde{\alpha}(t).$$
Note that $\alpha_{\epsilon}\in {\cal{M}}(T,\bar{\eta})$ for all $\epsilon$, hence, defining
$$\phi_{\alpha,\tilde{\alpha}}(\epsilon)=\Phi[\alpha_{\epsilon}],$$
we have that if $\alpha=\alpha_{opt}$ then, for any choice of $\tilde{\alpha}\in {\cal{M}}(T,\bar{\eta})$,
$\phi_{\alpha,\tilde{\alpha}}(\epsilon)\geq \phi_{\alpha,\tilde{\alpha}}(0)$ for $\epsilon \in [0,1]$, so that 
\begin{equation}\label{eq:pgz}\phi_{\alpha,\tilde{\alpha}}'(0)\geq 0\qquad\forall \tilde{\alpha}\in {\cal{M}}(T,\bar{\eta}).
\end{equation}
Conversely, if \eqref{eq:pgz} holds, then, since, by Lemma
\ref{lemma:convexity},
$\phi_{\alpha,\tilde{\alpha}}(\epsilon)$ is a strictly convex function, it follows that $\phi_{\alpha,\tilde{\alpha}}(\epsilon)$ is increasing in $\epsilon\in [0,1]$, implying
$\Phi[\tilde{\alpha}]>\Phi[\alpha]$, and since this is valid for all $\tilde{\alpha}\in {\cal{M}}(T,\bar{\eta})$, we conclude that $\alpha=\alpha_{opt}$.
We have thus shown that $\alpha=\alpha_{opt}$ if and only if \eqref{eq:pgz} holds.

We will now explicitly express condition \eqref{eq:pgz}.
Defining
$$S_{\epsilon}(t)={\cal{S}}[\alpha_{\epsilon}],$$
we have, using \eqref{eq:des},
$$\frac{d}{d\epsilon}S_\epsilon(t)\Big|_{\epsilon=0}=- e^{- \alpha(t)-\delta t}v(t)\int_0^T  K(t,r)c(r)e^{\alpha(r)+\delta r}dr$$
$$+e^{-\alpha(t)-\delta t }\int_0^T K(t,r)  c(r)e^{ \alpha(r)+\delta r}v(r)dr,$$
where
$$v(t)=\tilde{\alpha}(t)-\alpha(t).$$
Therefore, using an interchange of the order of integration,
\begin{equation}\label{eq:dp0}\begin{split}
\phi_{\alpha,\tilde{\alpha}}'(0)=&\frac{d}{d\epsilon}\left[\int_0^T w(t)S_{\epsilon}(t)dt\right]\Big|_{\epsilon=0}=\int_0^T w(t)\frac{d}{d\epsilon}S_\epsilon(t)\Big|_{\epsilon=0} dt\\
=&-\int_0^T w(t)e^{- \alpha(t)-\delta t }v(t)\int_0^T K(t,r)  c(r)e^{\alpha(r)+\delta r}dr dt\\
&+\int_0^Tw(t)e^{-\alpha(t)-\delta t }\int_0^T  K(t,r)c(r)e^{ \alpha(r)+\delta r}v(r)drdt\\
=&-\int_0^T w(t)e^{- \alpha(t)-\delta t }v(t)\int_0^T K(t,r)  c(r)e^{\alpha(r)+\delta r}dr dt\\
&+\int_0^T c(t)e^{ \alpha(t)+\delta t}v(t)\int_0^T K(r,t)
w(r)e^{-\alpha(r)-\delta r }
drdt\\
=&-\int_0^T h[\alpha](t)\cdot v(t)dt=\int_0^T h[\alpha](t)\cdot (\alpha(t)-\tilde{\alpha}(t))dt,
\end{split}\end{equation}
where $h[\alpha](t)$ is defined by \begin{equation}\begin{split}\label{eq:h}h[\alpha](t)=&
w(t)e^{- \alpha(t)-\delta t }\int_0^T K(t,r)  c(r)e^{\alpha(r)+\delta r}dr
\\&-c(t)e^{ \alpha(t)+\delta t}\int_0^T K(r,t)
w(r)e^{-\alpha(r)-\delta r }
dr\\
=&
\frac{w(t)e^{- \alpha(t)-\delta t }}{1-e^{-[\bar{\eta}+\delta]T}}\left[\int_0^t  c(r)e^{\alpha(r)+\delta r}dr+e^{-[\bar{\eta}+\delta ]T}\int_t^T c(r)e^{ \alpha(r)+\delta r}dr\right]\\
	&-\frac{c(t)e^{ \alpha(t)+\delta t}}{{1-e^{-[\bar{\eta}+\delta]T}}}\left[e^{-[\bar{\eta}+\delta ]T}\int_0^t  w(r)e^{-\alpha(r)-\delta r }dr+\int_t^Tw(r) e^{-\alpha(r)-\delta r } dr\right].\end{split}\end{equation}
In view of \eqref{eq:pgz},\eqref{eq:dp0}, we have that $\alpha=\alpha_{opt}$ if and only if
$$\int_0^T h[\alpha](t)\cdot (\alpha(t)-\tilde{\alpha}(t))dt\geq 0,\qquad\forall\tilde{\alpha}\in {\cal{M}}(T,\bar{\eta}).$$

We have therefore obtained the following
\begin{lemma}\label{lemma:pre1}
For $\alpha \in {\cal{M}}(T,\bar{\eta})$, define the function $h[\alpha](t)$ by \eqref{eq:h}.
		Then, we have $\alpha=\alpha_{opt}$ if and only if 
	\begin{equation}\label{eq:ili}\int_0^T h[\alpha](t)\tilde{\alpha}(t)dt\leq \int_0^T h[\alpha](t)\alpha(t)dt,\qquad \forall\tilde{\alpha}\in {\cal{M}}(T,\bar{\eta}).\end{equation}
\end{lemma}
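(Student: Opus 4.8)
The plan is to reduce the optimality of $\alpha$ to a variational inequality by restricting the convex functional $\Phi$ to line segments in the feasible set and reading off the sign of the one-sided derivative at the endpoint. First I would observe that $\mathcal{M}(T,\bar{\eta})$ is convex: it is the intersection of the convex cone of normalized right-semicontinuous non-decreasing functions satisfying \eqref{eq:aper} with the affine constraint $\alpha(T)=T\bar{\eta}$. Hence for any competitor $\tilde{\alpha}\in\mathcal{M}(T,\bar{\eta})$ the segment $\alpha_{\epsilon}=(1-\epsilon)\alpha+\epsilon\tilde{\alpha}$ remains in $\mathcal{M}(T,\bar{\eta})$ for $\epsilon\in[0,1]$, so $\phi_{\alpha,\tilde{\alpha}}(\epsilon)=\Phi[\alpha_{\epsilon}]$ is well defined on $[0,1]$ and, by Lemma \ref{lemma:convexity}, strictly convex whenever $\tilde{\alpha}\neq\alpha$.

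Next I would establish the equivalence $\alpha=\alpha_{opt}$ if and only if $\phi_{\alpha,\tilde{\alpha}}'(0)\geq 0$ for every $\tilde{\alpha}\in\mathcal{M}(T,\bar{\eta})$, i.e.\ condition \eqref{eq:pgz}. The forward direction is immediate: if $\alpha$ minimizes $\Phi$ then $\epsilon=0$ minimizes $\phi_{\alpha,\tilde{\alpha}}$ on $[0,1]$, forcing a non-negative right derivative. For the converse, where the convexity does the real work, suppose \eqref{eq:pgz} holds and fix $\tilde{\alpha}\neq\alpha$; strict convexity of $\phi_{\alpha,\tilde{\alpha}}$ together with $\phi_{\alpha,\tilde{\alpha}}'(0)\geq 0$ makes $\phi_{\alpha,\tilde{\alpha}}$ strictly increasing on $[0,1]$, so $\Phi[\tilde{\alpha}]=\phi_{\alpha,\tilde{\alpha}}(1)>\phi_{\alpha,\tilde{\alpha}}(0)=\Phi[\alpha]$. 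As $\tilde{\alpha}$ was arbitrary, $\alpha$ is the global minimizer.

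It then remains to turn \eqref{eq:pgz} into the stated inequality \eqref{eq:ili} by computing $\phi_{\alpha,\tilde{\alpha}}'(0)$ explicitly. I would differentiate $\Phi[\alpha_{\epsilon}]=\int_0^T w(t)S_{\epsilon}(t)\,dt$ under the integral sign, justified by the uniform $L^{\infty}$ bound on $\mathcal{S}$ recorded after \eqref{eq:spex1} which dominates the difference quotients, and substitute the expression \eqref{eq:des} for $\tfrac{d}{d\epsilon}S_{\epsilon}$ evaluated at $\epsilon=0$, writing $v=\tilde{\alpha}-\alpha$. The resulting double integral splits into two pieces; in the term carrying the factor $v(r)$ I would swap the integration variables $t,r$ via Fubini (legitimate since the integrand is bounded on the compact square $[0,T]^2$), which moves $w$ into the inner factor against $c$ and produces the kernel $K(r,t)$. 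Collecting terms yields exactly the combination $h[\alpha]$ of \eqref{eq:h} and gives $\phi_{\alpha,\tilde{\alpha}}'(0)=\int_0^T h[\alpha](t)\,(\alpha(t)-\tilde{\alpha}(t))\,dt$, so that \eqref{eq:pgz} is precisely \eqref{eq:ili} after rearranging.

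The main obstacle is the converse implication rather than the computation: a vanishing or one-sided first variation alone would yield only a \emph{necessary} condition, and it is the strict convexity from Lemma \ref{lemma:convexity} that upgrades the first-order condition to a genuinely sufficient and global characterization of the unique optimizer. The only other points requiring care are the interchange of differentiation and integration in forming $\phi_{\alpha,\tilde{\alpha}}'(0)$ and the Fubini swap symmetrizing $K$ into $h[\alpha]$; both are routine given the uniform bounds already available for $\mathcal{S}$, but they must be invoked explicitly since $\alpha,\tilde{\alpha}$ are merely monotone functions, possibly with jumps, rather than smooth ones.
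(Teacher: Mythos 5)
Your proposal is correct and follows essentially the same route as the paper's proof: the same reduction of optimality to the one-sided derivative condition $\phi_{\alpha,\tilde{\alpha}}'(0)\geq 0$ along segments in the convex set ${\cal{M}}(T,\bar{\eta})$, the same use of the strict convexity from Lemma \ref{lemma:convexity} to upgrade this necessary condition to a sufficient one, and the same explicit computation of $\phi_{\alpha,\tilde{\alpha}}'(0)$ via \eqref{eq:des} and a Fubini interchange to arrive at $\int_0^T h[\alpha](t)(\alpha(t)-\tilde{\alpha}(t))\,dt$. Your remarks on justifying the differentiation under the integral sign and the Fubini swap are sound and, if anything, slightly more explicit than the paper's treatment.
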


For $\alpha\in {\cal{M}}(T)$ we define
\begin{equation}\label{eq:psi}\psi[\alpha](t)=-\int_0^t h[\alpha](s) ds.\end{equation}
Note that $\psi[\alpha]$ is an absolutely continuous function.
A direct calculation, using \eqref{eq:h}, gives
\begin{equation}
    \begin{split}
        \label{eq:psie}\psi[\alpha](t)=&\frac{1}{1-e^{-[\bar{\eta}+\delta]T}}\int_0^t c(r)e^{\alpha(r)+\delta r}dr\int_t^T w(r)e^{- \alpha(r)-\delta r } dr\\&-\frac{1}{e^{[\bar{\eta}+\delta]T}-1}\int_0^t w(r)e^{- \alpha(r)-\delta r }dr\int_t^T c(r)e^{ \alpha(r)+\delta r}dr,
    \end{split}
\end{equation}
which in particular implies
\begin{lemma}\label{lemma:hprop}
Let $\alpha\in {\cal{M}}(T)$. Then the function $h[\alpha]$ is $T$-periodic and satisfies
$$\int_0^T h[\alpha](t)dt=-\psi[\alpha](T)=0,$$
which implies $\psi[\alpha]$ is also $T$-periodic.
\end{lemma}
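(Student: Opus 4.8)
The plan is to establish the three assertions of the lemma in order: the $T$-periodicity of $h[\alpha]$, then the identity $\int_0^T h[\alpha](t)\,dt=-\psi[\alpha](T)=0$, and finally the $T$-periodicity of $\psi[\alpha]$, which follows from the first two. Throughout I use the quasi-periodicity \eqref{eq:aper}, which here reads $\alpha(t+T)=\alpha(t)+T\bar{\eta}$ (consistently with the $\bar{\eta}$ appearing in the kernel $K$), together with the $T$-periodicity of the data $c,w$. It is convenient to abbreviate $\rho=e^{-[\bar{\eta}+\delta]T}\in(0,1)$ and to introduce the two absolutely continuous functions $A(t)=\int_0^t c(r)e^{\alpha(r)+\delta r}\,dr$ and $B(t)=\int_0^t w(r)e^{-\alpha(r)-\delta r}\,dr$. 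The key preliminary computation is the behaviour of $A,B$ under the shift $t\mapsto t+T$: splitting $\int_0^{t+T}=\int_0^T+\int_T^{t+T}$, applying the substitution $r=s+T$ and using $\alpha(s+T)=\alpha(s)+T\bar{\eta}$ and the periodicity of $c,w$, one obtains $A(t+T)=A(T)+\rho^{-1}A(t)$ and $B(t+T)=B(T)+\rho B(t)$.

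To prove that $h[\alpha]$ is $T$-periodic, I would treat the two terms of \eqref{eq:h} separately. The first term is precisely $w(t)\,\mathcal{S}[\alpha](t)$ by the definition \eqref{eq:spex1} of the operator $\mathcal{S}$; since $w$ is $T$-periodic and $\mathcal{S}[\alpha]$ is the $T$-periodic solution of the governing equation, this term is immediately $T$-periodic (if desired, its periodicity can be re-checked directly using the $A$-recurrence above). For the second term, written as $c(t)e^{\alpha(t)+\delta t}\,Q(t)$ with $Q(t)=\frac{\rho}{1-\rho}B(t)+\frac{1}{1-\rho}\bigl(B(T)-B(t)\bigr)=\frac{B(T)}{1-\rho}-B(t)$, I would substitute $t\mapsto t+T$, use $e^{\alpha(t+T)+\delta(t+T)}=\rho^{-1}e^{\alpha(t)+\delta t}$ and the recurrence $Q(t+T)=\frac{B(T)}{1-\rho}-B(T)-\rho B(t)$, and verify after cancellation that $c(t+T)e^{\alpha(t+T)+\delta(t+T)}Q(t+T)=c(t)e^{\alpha(t)+\delta t}Q(t)$. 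This yields periodicity of the second term and hence of $h[\alpha]$.

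The remaining assertions are then short. By the definition \eqref{eq:psi} we have $\psi[\alpha](T)=-\int_0^T h[\alpha](s)\,ds$, which gives the equality $\int_0^T h[\alpha](t)\,dt=-\psi[\alpha](T)$. To see that $\psi[\alpha](T)=0$, I would simply evaluate the explicit expression \eqref{eq:psie} at $t=T$: each of its two product terms contains the factor $\int_T^T(\cdots)\,dr=0$, so both vanish. Finally, using $\psi[\alpha](t)=-\int_0^t h[\alpha](s)\,ds$, I would write $\psi[\alpha](t+T)-\psi[\alpha](t)=-\int_t^{t+T}h[\alpha](s)\,ds$; since $h[\alpha]$ is $T$-periodic the integral over any interval of length $T$ equals $\int_0^T h[\alpha](s)\,ds$, which vanishes by the previous step, so $\psi[\alpha](t+T)=\psi[\alpha](t)$ for all $t$.

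The only genuine computation in the whole argument is the verification of periodicity of the second term of $h[\alpha]$; recognizing the first term as $w\,\mathcal{S}[\alpha]$ removes half of the bookkeeping, and the shift recurrences for $A$ and $B$ make the remaining check routine. I therefore expect no real obstacle, the only care being to track the exponential factors $\rho^{\pm1}$ correctly through the substitution $t\mapsto t+T$.
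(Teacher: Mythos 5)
Your proof is correct and follows essentially the same route as the paper: the paper derives the explicit formula \eqref{eq:psie} by a direct calculation and reads the lemma off from it, and your shift-recurrence verification of the $T$-periodicity of $h[\alpha]$ (via $A(t+T)=A(T)+\rho^{-1}A(t)$, $B(t+T)=B(T)+\rho B(t)$) together with the evaluation of \eqref{eq:psie} at $t=T$ is exactly the computation the paper leaves implicit. Your parenthetical reading of \eqref{eq:aper} as $\alpha(t+T)=\alpha(t)+T\bar{\eta}$ is well taken and in fact necessary: the periodicity of $h[\alpha]$ genuinely requires $\alpha(T)=T\bar{\eta}$ to match the $\bar{\eta}$ appearing in the kernel $K$ (the zero-integral identity alone holds without it), so the lemma's hypothesis $\alpha\in{\cal{M}}(T)$ should really be $\alpha\in{\cal{M}}(T,\bar{\eta})$, which is how the lemma is used throughout the paper.
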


Our first-order condition \eqref{eq:ili} is re-expressed in a useful way in the following

\begin{lemma}\label{lemma:bh1}
For $\alpha\in {\cal{M}}(T,\bar{\eta})$,
let 
$$\psi_{max}[\alpha]\doteq \max_{t\in \Real }\psi[\alpha](t)=\max_{t\in [0,T)}\psi[\alpha](t).$$

We have $\alpha=\alpha_{opt}$
if and only if 
\begin{equation}\label{eq:vm}t\in \Omega(\alpha)\quad\Rightarrow \quad \psi[\alpha](t)=\psi_{max}[\alpha].
\end{equation}
\end{lemma}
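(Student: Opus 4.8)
The plan is to start from Lemma \ref{lemma:pre1}, which says $\alpha=\alpha_{opt}$ if and only if the integral inequality \eqref{eq:ili} holds for all $\tilde\alpha\in\mathcal{M}(T,\bar\eta)$, and to translate this into the cleaner statement \eqref{eq:vm} using the potential function $\psi[\alpha]$ defined in \eqref{eq:psi}. The natural tool is integration by parts (Stieltjes integration by parts): since $\alpha(t)=\int_0^t d\alpha(s)$ and $h[\alpha]=-\psi[\alpha]'$ with $\psi[\alpha]$ absolutely continuous and, by Lemma \ref{lemma:hprop}, $T$-periodic, I expect to rewrite $\int_0^T h[\alpha](t)\,\gamma(t)\,dt$ for $\gamma\in\{\alpha,\tilde\alpha\}$ as $-\int_0^T \psi[\alpha]'(t)\,\gamma(t)\,dt$ and integrate by parts to obtain $\int_0^T \psi[\alpha](t)\,d\gamma(t)$ plus a boundary term. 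The boundary term $[-\psi[\alpha](t)\gamma(t)]_0^T$ will vanish or cancel between the two sides: because $\psi[\alpha]$ is $T$-periodic, because $\gamma(T)=T\bar\eta$ is the same for both $\alpha$ and $\tilde\alpha$ (both lie in $\mathcal{M}(T,\bar\eta)$), and because $\gamma(0)=0$. Hence inequality \eqref{eq:ili} becomes, after this manipulation, the equivalent statement
\begin{equation*}
\int_0^T \psi[\alpha](t)\,d\tilde\alpha(t)\leq \int_0^T \psi[\alpha](t)\,d\alpha(t)\qquad\forall\,\tilde\alpha\in\mathcal{M}(T,\bar\eta),
\end{equation*}
that is, $\alpha$ maximizes the linear Stieltjes functional $\gamma\mapsto\int_0^T\psi[\alpha]\,d\gamma$ over $\mathcal{M}(T,\bar\eta)$.

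With the condition in this form, the remaining argument is a simple optimization over measures with fixed total mass $T\bar\eta$. I would argue: for any $\tilde\alpha\in\mathcal{M}(T,\bar\eta)$, since $\mu_{\tilde\alpha}$ is a positive measure of total mass $T\bar\eta$ on $[0,T)$, we have $\int_0^T\psi[\alpha]\,d\tilde\alpha\leq \psi_{max}[\alpha]\cdot T\bar\eta$, with equality attainable (e.g.\ by concentrating $\mu_{\tilde\alpha}$ at a maximizer of $\psi[\alpha]$). Therefore $\alpha$ is optimal precisely when $\int_0^T\psi[\alpha]\,d\alpha=\psi_{max}[\alpha]\cdot T\bar\eta$, i.e.\ when the whole mass of $\mu_\alpha$ is carried by the set where $\psi[\alpha]$ attains its maximum. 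Writing $\int_0^T\bigl(\psi_{max}[\alpha]-\psi[\alpha](t)\bigr)d\alpha(t)=0$ with a nonnegative integrand and a positive measure, equality forces $\psi[\alpha](t)=\psi_{max}[\alpha]$ for $\mu_\alpha$-almost every $t$, and since $\psi[\alpha]$ is continuous and the support $\Omega(\alpha)$ is the smallest closed set of full measure, this is exactly the pointwise condition \eqref{eq:vm}: $t\in\Omega(\alpha)\Rightarrow\psi[\alpha](t)=\psi_{max}[\alpha]$.

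The main obstacle I anticipate is handling the boundary term in the Stieltjes integration by parts carefully, since $\alpha$ and $\tilde\alpha$ need not be continuous (they are only right-semicontinuous monotone functions, possibly with jumps), so I must use the correct integration-by-parts formula for Stieltjes integrals of two functions of bounded variation and verify that the endpoint contributions genuinely cancel under the $\mathcal{M}(T,\bar\eta)$ normalization and the periodicity of $\psi[\alpha]$. The second delicate point is the passage from ``$\psi[\alpha]=\psi_{max}[\alpha]$ $\mu_\alpha$-a.e.'' to the clean implication on the closed support $\Omega(\alpha)$; here continuity of $\psi[\alpha]$ (guaranteed since it is absolutely continuous) is essential, as it ensures the set $\{\psi[\alpha]=\psi_{max}[\alpha]\}$ is closed and hence contains $\Omega(\alpha)$ once it has full $\mu_\alpha$-measure. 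Everything else — the equivalence direction and the extremal evaluation of the linear functional — is routine.
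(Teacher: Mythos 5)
Your proposal is correct and follows essentially the same route as the paper's proof: both reduce the first-order condition \eqref{eq:ili} of Lemma \ref{lemma:pre1} to the statement that $\alpha$ maximizes the linear functional $\gamma\mapsto\int_0^T\psi[\alpha]\,d\gamma$ over ${\cal{M}}(T,\bar{\eta})$, and then conclude by the extremal argument that this maximum equals $\psi_{max}[\alpha]\cdot T\bar{\eta}$ and is achieved exactly when the support of $\mu_\alpha$ lies in the (closed, by continuity of $\psi[\alpha]$) set where the maximum is attained. The only cosmetic difference is that you obtain the key identity $\int_0^T h[\alpha](t)\gamma(t)\,dt=\int_0^T\psi[\alpha](t)\,d\gamma(t)$ via Stieltjes integration by parts (with boundary terms vanishing since $\psi[\alpha](0)=\psi[\alpha](T)=0$), whereas the paper gets it by writing $\gamma(t)=\int_0^t d\gamma(s)$ and interchanging the order of integration --- two formulations of the same manipulation.
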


\begin{proof}
Using Lemma
\ref{lemma:hprop}, we have that the function $\psi[\alpha]$ is 
$T$-periodic, and also that
$$\int_{s}^T h[\alpha](t)dt=\psi[\alpha](s).$$
We then have
\begin{equation*}\begin{split}
        &\int_0^T h[\alpha](t)\tilde{\alpha}(t)dt\\&=\int_0^T h[\alpha](t)\int_0^t 1 \,d\tilde{\alpha}(s) dt=\int_0^T \left(\int_s^T h[\alpha](t) dt\right) d\tilde{\alpha}(s)
=\int_0^T \psi[\alpha](s)d\tilde{\alpha}(s),\end{split}
\end{equation*}
so that \eqref{eq:ili} can be written as
$$\int_0^T \psi[\alpha](t)d\tilde{\alpha}(t)\leq \int_0^T \psi[\alpha](t)d\alpha(t)\qquad\forall \tilde{\alpha}\in{\cal{M}}(T,\bar{\eta}),$$
that is
\begin{equation}\label{eq:ili1}
\sup_{\tilde{\alpha}\in{\cal{M}}(T,\bar{\eta})}\int_0^T \psi[\alpha](t)d\tilde{\alpha}(t)= \int_0^T \psi[\alpha](t)d\alpha(t).
\end{equation}
Note now that since $\psi[\alpha](t)$ is continuous and $T-$periodic, it reaches its maximum $\psi_{max}[\alpha]$. We denote the subset of $[0,T)$ on which this maximum is attained by
$$V_{max}[\alpha]=\{t\in [0,T)\;|\; \psi[\alpha](t)=\psi_{max}[\alpha]\}.$$
We have
$$\int_0^T \psi[\alpha](t) d\tilde{\alpha}(t)\leq \psi_{max}[\alpha]\int_0^T 1 d\tilde{\alpha}(t)=\psi_{max}[\alpha]\cdot \tilde{\alpha}(T)=\psi_{max}[\alpha]\cdot T\bar{\eta},$$
and equality is achieved if and only if 
\begin{equation}\label{eq:ccc}
\Omega(\alpha_{opt})\cap [0,T)\subset V_{max}[\alpha].
\end{equation}
Therefore
$$\sup_{\tilde{\alpha}\in{\cal{M}}(T,\bar{\eta})}\int_0^T \psi[\alpha](s)d\tilde{\alpha}(s)=\psi_{max}[\alpha]\cdot T\bar{\eta},$$
so that \eqref{eq:ili1} holds if 
and only if $\alpha$ satisfies
$$\int_0^T \psi[\alpha](s)d\alpha(s)=\psi_{max}[\alpha]\cdot T\bar{\eta},$$
which holds if and only if
\eqref{eq:ccc} holds. We conclude that 
$\alpha$ satisfies  \eqref{eq:ili1}, which is equivalent to \eqref{eq:ili}, if and only if it satisfies
\eqref{eq:ccc}. Since, by Lemma \ref{lemma:pre1},
$\alpha$ satisfies \eqref{eq:ili}
if and only if $\alpha=\alpha_{opt}$, 
we have that 
\eqref{eq:ccc} holds iff 
$\alpha=\alpha_{opt}$.
Since $\psi[\alpha]$ is $T$-periodic, 
\eqref{eq:ccc} is equivalent to \eqref{eq:vm}.
We thus have the desired result.
\end{proof}

\subsection{Regularity of the optimizer: continuity}
\label{sec:continuity}

While the solution $\alpha_{opt}$ of Problem \ref{prob:main}, whose existence is ensured by Theorem
\ref{th:existence}, is {\it{a-priori}} an arbitrary function $\alpha \in {\cal{M}}(T,\bar{\eta})$, 
the arguments in the following will prove that, under some regularity assumptions on the data $c(t),w(t)$,  the solution $\alpha_{opt}$ is more regular.

We use the notation
$$f(t+)=\lim_{s\rightarrow t+}f(s),\quad f(t-)=\lim_{s\rightarrow t-}f(s),$$
for right and left-hand limits. 


\begin{lemma}\label{lemma:htp}
Assume that $t\in \Omega(\alpha_{opt})$, and that the
limits $w(t\pm),c(t\pm)$ exist.
Then we have
\begin{equation}\label{eq:pdh}h[\alpha_{opt}](t+)\geq 0,\quad h[\alpha_{opt}](t-)\leq 0.\end{equation}
\end{lemma}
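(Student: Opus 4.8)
The plan is to exploit the optimality of $\alpha_{opt}$ through Lemma \ref{lemma:bh1}, which guarantees that at every point $t\in\Omega(\alpha_{opt})$ the function $\psi[\alpha_{opt}]$ attains its global maximum $\psi_{max}[\alpha_{opt}]$. Since $\psi[\alpha_{opt}]$ is, by \eqref{eq:psi}, the absolutely continuous primitive $\psi[\alpha_{opt}](t)=-\int_0^t h[\alpha_{opt}](s)\,ds$, the condition that $t$ is a maximum point should translate into sign conditions on the one-sided derivatives of $\psi[\alpha_{opt}]$, which in turn are governed by the one-sided limits of $h[\alpha_{opt}]$. Concretely, a global maximum forces the right difference quotient to be $\le 0$ and the left difference quotient to be $\ge 0$, and since $\psi[\alpha_{opt}]'=-h[\alpha_{opt}]$ a.e.\ these will yield $h[\alpha_{opt}](t+)\ge 0$ and $h[\alpha_{opt}](t-)\le 0$, which is exactly \eqref{eq:pdh}.

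First I would verify that the one-sided limits $h[\alpha_{opt}](t\pm)$ actually exist. Inspecting the explicit formula \eqref{eq:h}, each of the four integral factors appearing there is continuous in $t$; the exponential prefactors $e^{\pm(\alpha_{opt}(t)+\delta t)}$ possess one-sided limits $e^{\pm(\alpha_{opt}(t\pm)+\delta t)}$ because $\alpha_{opt}$ is monotone and hence has one-sided limits everywhere; and $w(t\pm),c(t\pm)$ exist by hypothesis. Multiplying and adding these, both one-sided limits of $h[\alpha_{opt}]$ exist.

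The key step is to pass from the a.e.\ identity $\psi[\alpha_{opt}]'=-h[\alpha_{opt}]$ to genuine one-sided derivatives at the single point $t$; this is delicate precisely because $h[\alpha_{opt}]$ may be discontinuous at $t$ (when $c$ or $w$ jumps), so one cannot simply evaluate the identity at $t$. Here I would use the absolute continuity of $\psi[\alpha_{opt}]$: for $\tau>0$,
$$\frac{\psi[\alpha_{opt}](t+\tau)-\psi[\alpha_{opt}](t)}{\tau}=-\frac{1}{\tau}\int_t^{t+\tau}h[\alpha_{opt}](s)\,ds\longrightarrow -h[\alpha_{opt}](t+)\quad(\tau\to 0+),$$
since the average of $h[\alpha_{opt}]$ over $[t,t+\tau]$ converges to its right-hand limit once that limit exists; an identical computation on $[t-\tau,t]$ produces the left derivative $-h[\alpha_{opt}](t-)$. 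Thus $\psi[\alpha_{opt}]$ admits one-sided derivatives $\psi'_\pm(t)=-h[\alpha_{opt}](t\pm)$.

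Finally, because $\psi[\alpha_{opt}](t)=\psi_{max}[\alpha_{opt}]\ge\psi[\alpha_{opt}](s)$ for all $s$, the right difference quotients are $\le 0$ and the left difference quotients are $\ge 0$; letting $\tau\to 0+$ gives $-h[\alpha_{opt}](t+)=\psi'_+(t)\le 0$ and $-h[\alpha_{opt}](t-)=\psi'_-(t)\ge 0$, which are precisely the asserted inequalities. The main obstacle is the transfer carried out in the third paragraph: the derivative formula holds only almost everywhere, so the averaging argument is what makes the identification of the one-sided derivatives with $-h[\alpha_{opt}](t\pm)$ rigorous in the presence of a possible jump of $h[\alpha_{opt}]$ at $t$.
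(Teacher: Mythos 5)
Your proof is correct and follows essentially the same route as the paper's: invoke Lemma \ref{lemma:bh1} to place $t$ at a global maximum of $\psi[\alpha_{opt}]$, establish that $h[\alpha_{opt}](t\pm)$ exist (via monotonicity of $\alpha_{opt}$ and the hypotheses on $c,w$), identify the one-sided derivatives $\psi[\alpha_{opt}]'(t\pm)=-h[\alpha_{opt}](t\pm)$, and read off the sign conditions. The only difference is that you spell out, via the averaging argument for the difference quotients, the identification of the one-sided derivatives, a step the paper asserts without detail; this is a welcome addition of rigor but not a different approach.
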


\begin{proof}
We first note that, for any $\alpha\in {\cal{M}}(T,\bar{\eta})$, since $\alpha$ is monotone, its left- and right-hand limits
$\alpha(t\pm)$
exist at every point. By the definition \eqref{eq:h}, if 
$c(t\pm),w(t\pm)$ exist then
$h[\alpha](t\pm)$ exist, with
\begin{equation}
    \begin{split}
        \label{eq:lim}h[\alpha](t\pm)=&w(t\pm)e^{- \alpha(t\pm)-\delta t }\int_0^T K(t,r)  c(r)e^{\alpha(r)+\delta r}dr\\&-c(t\pm)e^{ \alpha(t\pm)+\delta t}\int_0^T K(r,t)
w(r)e^{-\alpha(r)-\delta r }
dr.
    \end{split}
\end{equation}
This implies that 
the function $\psi[\alpha](t)$, given by \eqref{eq:psi}, has 
right and left-hand derivatives, given by
\begin{equation}\label{eq:ph}\psi[\alpha]'(t\pm)=-h[\alpha](t\pm).\end{equation}
Assume now that $\alpha=\alpha_{opt}$,
which, by Lemma \ref{lemma:bh1}, implies that \eqref{eq:vm} holds.
Thus, assuming $t\in \Omega(\alpha_{opt})$,  $\psi[\alpha_{opt}]$ is maximized at
$t$, so we have
\begin{equation}\label{eq:wa}\psi[\alpha_{opt}]'(t+)\leq 0,\quad\psi[\alpha_{opt}]'(t-)\geq 0\quad\Rightarrow\quad h[\alpha_{opt}](t+)\geq 0,\quad h[\alpha_{opt}](t-)\leq 0.\end{equation}
\end{proof}

\begin{lemma}\label{lemma:ht0}
	Assume $t\in \Omega(\alpha_{opt})$, $w$,$c$ are continuous at $t$, and $c(t)>0,w(t)>0$. Then,
	
(i) $\alpha_{opt}$ is continuous at $t$.
	
(ii) $h[\alpha_{opt}](t)=0$.

(iii) We have
\begin{equation}\label{eq:reg}\alpha_{opt}(t)=\rho(t)\doteq \frac{1}{2}\ln\left(\frac{w(t)}{c(t)}\cdot \frac{\int_0^T K(t,r)  c(r)e^{\alpha_{opt}(r)+\delta r}dr}{\int_0^T K(r,t)w(r)e^{-\alpha_{opt}(r)-\delta r }dr}\right)-\delta t.\end{equation}

(iv) $S_{opt}$ is continuous at $t$.
\end{lemma}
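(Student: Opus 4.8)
The plan is to build on Lemma~\ref{lemma:htp}, which, since continuity of $c,w$ at $t$ guarantees that the one-sided limits exist, already supplies the sign conditions $h[\alpha_{opt}](t+)\ge 0$ and $h[\alpha_{opt}](t-)\le 0$. The decisive observation is that in the expression \eqref{eq:lim} for $h[\alpha_{opt}](t\pm)$ the two integrals
$$A \doteq \int_0^T K(t,r) c(r) e^{\alpha_{opt}(r)+\delta r}\,dr, \qquad B \doteq \int_0^T K(r,t) w(r) e^{-\alpha_{opt}(r)-\delta r}\,dr$$
do \emph{not} depend on the value of $\alpha_{opt}$ at the single point $t$ (which is a null set for the $r$-integration), and both are strictly positive because $K>0$ and $c,w$ are nonnegative and not identically $0$ (Assumption~\ref{a:eu}). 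Using continuity of $c,w$ at $t$, so that $c(t\pm)=c(t)$ and $w(t\pm)=w(t)$, the one-sided values become $h[\alpha_{opt}](t\pm)=g(\alpha_{opt}(t\pm))$, where $g(x)\doteq w(t)A\,e^{-x-\delta t}-c(t)B\,e^{x+\delta t}$ is a \emph{strictly decreasing} function of $x$, running from $+\infty$ to $-\infty$, with a unique zero $x_0$.

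First I would record these facts, and then carry out a squeeze argument. The sign conditions from Lemma~\ref{lemma:htp} read $g(\alpha_{opt}(t+))\ge 0$ and $g(\alpha_{opt}(t-))\le 0$, which by strict monotonicity of $g$ force $\alpha_{opt}(t+)\le x_0\le \alpha_{opt}(t-)$. But $\alpha_{opt}$ is monotone non-decreasing, so $\alpha_{opt}(t-)\le \alpha_{opt}(t+)$; combining the two chains squeezes $\alpha_{opt}(t-)=\alpha_{opt}(t+)=x_0$. Since elements of ${\cal{M}}(T)$ are right-continuous, $\alpha_{opt}(t)=\alpha_{opt}(t\pm)=x_0$, which is part (i), and then $h[\alpha_{opt}](t)=g(x_0)=0$, which is part (ii). Part (iii) follows by solving $g(x_0)=0$: the equation $w(t)A\,e^{-x_0-\delta t}=c(t)B\,e^{x_0+\delta t}$ gives $2(x_0+\delta t)=\ln\left(\frac{w(t)}{c(t)}\cdot\frac{A}{B}\right)$, which rearranges exactly into \eqref{eq:reg}.

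For part (iv) I would use the representation $S_{opt}(s)=e^{-\alpha_{opt}(s)-\delta s}\,A(s)$, where $A(s)\doteq\int_0^T K(s,r)c(r)e^{\alpha_{opt}(r)+\delta r}\,dr$. Writing out $K$ via \eqref{eq:spex1}, $A(s)$ is an affine combination of $\int_0^s$ and $\int_s^T$ of the fixed $L^1$ integrand $c(r)e^{\alpha_{opt}(r)+\delta r}$ (here $\alpha_{opt}$ is bounded on $[0,T]$, so the integrand is integrable), hence $A(s)$ is continuous in $s$ everywhere. Since part (i) gives continuity of $\alpha_{opt}$ at $t$, the prefactor $e^{-\alpha_{opt}(s)-\delta s}$ is continuous at $s=t$ as well, so $S_{opt}$ is continuous at $t$.

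The routine verifications here are the positivity and $t$-independence of $A,B$ and the continuity of $s\mapsto A(s)$. The one genuinely load-bearing step is the squeeze argument of the second paragraph, which hinges precisely on recognizing that $h[\alpha_{opt}](t\pm)$ depends on $\alpha_{opt}(t\pm)$ only through a single strictly decreasing explicit factor, so that the a priori admissible jump $\alpha_{opt}(t-)<\alpha_{opt}(t+)$ is ruled out by the opposing one-sided sign conditions. I do not expect a serious obstacle beyond carefully justifying that the value of $\alpha_{opt}$ at the isolated point $t$ does not enter $A$ and $B$.
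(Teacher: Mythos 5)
Your proposal is correct and follows essentially the same route as the paper: both use the one-sided sign conditions of Lemma~\ref{lemma:htp} together with monotonicity of $\alpha_{opt}$ to squeeze $\alpha_{opt}(t+)=\alpha_{opt}(t-)$, deduce $h[\alpha_{opt}](t)=0$, read off \eqref{eq:reg} from that equation, and obtain continuity of $S_{opt}$ from continuity of the integral terms and of $e^{-\alpha_{opt}(\cdot)}$. Your packaging of the one-sided values as $g(\alpha_{opt}(t\pm))$ for a strictly decreasing $g$ is just a tidier phrasing of the paper's pair of exponential inequalities, with the bonus that the common value $x_0$ immediately yields part (iii).
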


It should be noted that \eqref{eq:reg}, though it will play an important role in out subsequent arguments, is {\it{not}} an explicit 
expression for $\alpha_{opt}(t)$,
since it
depends on $\alpha_{opt}$. 
An explicit expression for 
$\alpha_{opt}^\prime(t)$ is derived 
in Lemma \ref{lemma:am} below, but the arguments leading to this explicit expression depend on the
regularity results that 
are proved in this and the following Subsections.

\begin{proof}
Using the continuity of $c,w$ at $t$, we have, from \eqref{eq:pdh} and \eqref{eq:lim},
\begin{equation}\label{eq:ineq00}e^{ 2\alpha_{opt}(t+)}\leq e^{-2\delta t}\cdot  \frac{w(t)}{c(t)}\cdot \frac{\int_0^T K(t,r) c(r) e^{\alpha_{opt}(r)+\delta r}dr}{\int_0^T K(r,t)w(r) e^{-\alpha_{opt}(r)-\delta r } dr},\end{equation}
\begin{equation}\label{eq:ineq01}e^{ 2\alpha_{opt}(t-)}\geq e^{-2\delta t}\cdot  \frac{w(t)}{c(t)}\cdot \frac{\int_0^T K(t,r) c(r) e^{\alpha_{opt}(r)+\delta r}dr}{\int_0^T K(r,t)w(r) e^{-\alpha_{opt}(r)-\delta r } dr},\end{equation}
which imply
$$
e^{2\alpha_{opt}(t+)}\leq e^{2\alpha_{opt}(t-)}\quad\Rightarrow\quad
\alpha_{opt}(t+)\leq \alpha_{opt}(t-).
$$
On the other hand, since $\alpha_{opt}$ is monotone non-decreasing we must have $\alpha_{opt}(t+)\geq \alpha_{opt}(t-)$, so we conclude that
\begin{equation}\label{eq:ac}\alpha_{opt}(t+)= \alpha_{opt}(t-).
\end{equation}
We have therefore shown (i).

Moreover, \eqref{eq:lim}, the continuity of $c, w$ at $t$, and \eqref{eq:ac} imply that $h[\alpha_{opt}](t+)=h[\alpha_{opt}](t-)$. Together with \eqref{eq:wa}, we conclude that
(ii) holds.

To prove (iii), note that \eqref{eq:reg}
is another way to write the equality $h[\alpha](t)=0$.

To prove (iv), recall that
\begin{equation}\label{eq:sopt0}S_{opt}(t)=\frac{e^{-\alpha_{opt}(t)-\delta t }}{1-e^{-[\bar{\eta}+\delta ]T}}\left[\int_0^t c(r) e^{\alpha_{opt}(r)+\delta r}dr+e^{-[\bar{\eta}+\delta ]T}\int_t^T c(r)e^{\alpha_{opt}(r)+\delta r}dr\right].\end{equation}
The integral terms in \eqref{eq:sopt0}, are always continuous, and the continuity of 
$e^{-\alpha_{opt}(t)}$ follows from the
continuity of $\alpha_{opt}$ at $t$.
\end{proof}

As a consequence of the previous lemma,
we obtain:

\begin{lemma}\label{lemma:perfect} 
Assume $w,c$ are continuous and positive on an open set $U\subset \Real$.
Then,

(i) $\alpha_{opt}$ and 
$S_{opt}$ are continuous at all points $t\in U$.
	
(ii) The set $U\cap \Omega(\alpha_{opt})$ 
contains no isolated points.
\end{lemma}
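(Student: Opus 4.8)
The plan is to obtain both parts as fairly quick consequences of Lemma \ref{lemma:ht0} together with the elementary behavior of a monotone function off the support of its associated measure, so I do not anticipate a genuinely hard step; the only delicate point is that part (ii) must be deduced \emph{after} part (i), since it rests essentially on the continuity of $\alpha_{opt}$ at the point in question.

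For part (i), I would split $U$ according to whether a point lies in the support. At any $t\in U\cap\Omega(\alpha_{opt})$ the hypotheses of Lemma \ref{lemma:ht0} are satisfied, because $w,c$ are continuous and positive on $U$; thus parts (i) and (iv) of that lemma already give continuity of $\alpha_{opt}$ and of $S_{opt}$ at $t$. At any $t\in U$ with $t\notin\Omega(\alpha_{opt})$, the negation of the support condition provides some $\epsilon>0$ with $\alpha_{opt}(t+\epsilon)-\alpha_{opt}(t-\epsilon)=0$; monotonicity then squeezes $\alpha_{opt}$ to a constant on $(t-\epsilon,t+\epsilon)$, so it is in particular continuous at $t$. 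Combining the two cases yields continuity of $\alpha_{opt}$ throughout $U$, and continuity of $S_{opt}$ on $U$ then follows exactly as in Lemma \ref{lemma:ht0}(iv): in formula \eqref{eq:sopt0} the integral factors are always continuous and the remaining factor $e^{-\alpha_{opt}(t)-\delta t}$ is continuous wherever $\alpha_{opt}$ is, which now holds at every point of $U$.

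For part (ii), I would argue by contradiction. Suppose $t_0\in U\cap\Omega(\alpha_{opt})$ were isolated in $\Omega(\alpha_{opt})$. Shrinking if necessary so the interval lies in the open set $U$, there is $\epsilon>0$ with $(t_0-\epsilon,t_0+\epsilon)\cap\Omega(\alpha_{opt})=\{t_0\}$. Every point of $(t_0-\epsilon,t_0)$ and of $(t_0,t_0+\epsilon)$ lies off the support, hence has a neighborhood on which $\alpha_{opt}$ is constant; being locally constant on each of these two connected intervals, $\alpha_{opt}$ is constant on each of them. By part (i), $\alpha_{opt}$ is continuous at $t_0$, so the left and right constant values both equal $\alpha_{opt}(t_0)$, making $\alpha_{opt}$ constant on all of $(t_0-\epsilon,t_0+\epsilon)$. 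Then $\alpha_{opt}(t_0+\epsilon')-\alpha_{opt}(t_0-\epsilon')=0$ for small $\epsilon'$, contradicting $t_0\in\Omega(\alpha_{opt})$.

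As noted, the crux is the dependence of (ii) on (i): without continuity of $\alpha_{opt}$ at $t_0$, an isolated support point could carry an atom and the contradiction would collapse. This is precisely what occurs at the downward-jump points of the data treated in Theorem \ref{th:discont}, which explains why the requirement $t_0\in U$ — where $w,c$ are continuous and positive — is indispensable.
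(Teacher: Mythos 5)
Your proposal is correct and follows essentially the same route as the paper: part (i) combines Lemma \ref{lemma:ht0} at support points with local constancy of $\alpha_{opt}$ off the support, and part (ii) derives a contradiction from the fact that an isolated support point would force a jump $\alpha_{opt}(t_0+)>\alpha_{opt}(t_0-)$, which part (i) forbids. Your explicit remark that (ii) genuinely depends on (i) — and would fail at atoms arising from discontinuous data — is a fair gloss on why the hypothesis $t_0\in U$ matters, and is consistent with the paper's treatment in Theorem \ref{th:discont}.
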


\begin{proof}
By Lemma \ref{lemma:ht0} we have that
$\alpha_{opt}$ is continuous at every
$t\in U\cap \Omega(\alpha_{opt})$.
If $t\not\in \Omega(\alpha_{opt})$, $\alpha_{opt}$ is constant in a neighborhood of $t$, so it is trivially continuous. We have therefore shown that $\alpha_{opt}$ is continuous at all $t\in U$. 
From \eqref{eq:sopt0} it then follows that $S_{opt}$ is continuous on $U$.

To prove (ii), we note that an
isolated point $t\in U\cap \Omega(\alpha_{opt})$ would
satisfy $\alpha_{opt}(t+)>\alpha_{opt}(t-)$, contradicting the continuity; hence $U\cap \Omega(\alpha_{opt})$ contains no isolated points.

\end{proof}

\subsection{Regularity of the optimizer: differentiability}
\label{sec:differentiability}

We will denote the one-sided derivatives of a function $f$ 
at $t$ (if they exist) by
$$f'(t+)=\lim_{s\rightarrow t+}\frac{f(s)-f(t+)}{s-t},\quad f'(t-)=\lim_{s\rightarrow t-}\frac{f(s)-f(t-)}{s-t}.$$

\begin{lemma}\label{lemma:diff}
Assume that $t\in \Omega(\alpha_{opt})$ is a point at which $w,c$
are differentiable and $w(t)>0,c(t)>0$.

(i) If 
\begin{equation}\label{eq:cc1}
\exists \delta_0>0:\quad w,c \quad{\mbox{are continuous on }}\quad[t,t+\delta_0),
\end{equation}
and if $t$ is a limit point from the right of the set $\Omega(\alpha_{opt})$, that is
\begin{equation}\label{eq:rd1}t\in \overline{\Omega(\alpha_{opt})\cap (t,\infty)},
\end{equation}
then the right-hand derivative $\alpha_{opt}^\prime(t+)$ exists and is finite.

(ii) If the conditions
\begin{equation}\label{eq:cc2}
	\exists \delta_0>0:\quad w,c \quad{\mbox{are continuous on }}\quad(t-\delta_0,t],
\end{equation}
\begin{equation}\label{eq:rd2}t\in \overline{\Omega(\alpha_{opt})\cap (-\infty,t)}
\end{equation}
hold, then the left-hand derivative $\alpha_{opt}^\prime(t-)$ exists and is finite.

(iii) If \eqref{eq:cc1}, \eqref{eq:rd1}, \eqref{eq:cc2} and
\eqref{eq:rd2} hold, then 
$\alpha_{opt}^\prime(t-)=\alpha_{opt}^\prime(t+)$, so that $\alpha_{opt}$ is 
differentiable at $t$.
\end{lemma}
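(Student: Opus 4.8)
The plan is to exploit the relation \eqref{eq:reg} from Lemma \ref{lemma:ht0}, which holds at every point of $\Omega(\alpha_{opt})$ where $w,c$ are continuous and positive. The key observation is that the right-hand side $\rho(t)$ of \eqref{eq:reg} is a \emph{differentiable} function of $t$ on any interval where $w,c$ are continuous and positive: the two integral factors $\int_0^T K(t,r)c(r)e^{\alpha_{opt}(r)+\delta r}\,dr$ and $\int_0^T K(r,t)w(r)e^{-\alpha_{opt}(r)-\delta r}\,dr$ are functions of $t$ only through the threshold in the kernel $K$, and since the integrands are bounded, these factors are Lipschitz in $t$; moreover they are bounded away from zero (the integrals are of strictly positive quantities). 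Taking the logarithm, the $t$-dependence of $\rho$ comes from the explicit terms $\frac12\ln(w(t)/c(t))$ and $-\delta t$, both differentiable at $t$ by hypothesis, plus the difference of logarithms of the two integral factors. So the heart of the matter is to show that $\rho$ is differentiable at $t$ and then transfer that differentiability to $\alpha_{opt}$.

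For part (i), I would argue as follows. Under \eqref{eq:cc1} and \eqref{eq:rd1}, there is a sequence $t_n \downarrow t$ with $t_n \in \Omega(\alpha_{opt})$, and by Lemma \ref{lemma:perfect}(i) $\alpha_{opt}$ is continuous at $t$ and at each $t_n$. At each such point \eqref{eq:reg} gives $\alpha_{opt}(t_n)=\rho(t_n)$, and also $\alpha_{opt}(t)=\rho(t)$ since $t\in\Omega(\alpha_{opt})$. The crucial point is that on the whole half-interval $[t,t+\delta_0)$ the function $\alpha_{opt}$ is \emph{sandwiched}: on the active set it equals $\rho$, and on the gaps (maximal intervals in the complement of $\Omega(\alpha_{opt})$) it is constant, hence equal to the common value $\rho$ takes at the two endpoints of the gap, which both lie in $\Omega(\alpha_{opt})$. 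Since $\rho$ is differentiable at $t$, one can pass to the difference quotient $\frac{\alpha_{opt}(s)-\alpha_{opt}(t)}{s-t}$ for $s\downarrow t$: whether $s$ lies in the active set or in a gap, $\alpha_{opt}(s)$ is squeezed between values of $\rho$ at points converging to $t$, so the difference quotient converges to $\rho'(t+)$. This yields existence and finiteness of $\alpha_{opt}^\prime(t+)=\rho'(t+)$. Part (ii) is entirely symmetric, replacing right-hand by left-hand limits and using \eqref{eq:cc2}, \eqref{eq:rd2}.

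For part (iii), once both one-sided derivatives exist and equal the corresponding one-sided derivatives of $\rho$ at $t$, and since under full two-sided continuity \eqref{eq:cc1}+\eqref{eq:cc2} the function $\rho$ is genuinely differentiable at $t$ (its $t$-dependence is through functions differentiable at $t$), we get $\alpha_{opt}^\prime(t+)=\rho'(t)=\alpha_{opt}^\prime(t-)$, establishing differentiability of $\alpha_{opt}$ at $t$.

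I expect the main obstacle to be the sandwiching step in parts (i)--(ii): carefully handling points $s$ that fall in the gaps of $\Omega(\alpha_{opt})$, where $\alpha_{opt}$ is locally constant but not equal to $\rho(s)$. One must control the gap structure near $t$ --- using that $t$ is a right limit point of $\Omega(\alpha_{opt})$ so gaps abutting $t$ from the right shrink --- and verify that the value of $\alpha_{opt}$ on each gap, namely $\rho$ evaluated at the gap's endpoints, still produces a difference quotient converging to $\rho'(t+)$. This requires the differentiability (indeed, the local Lipschitz continuity) of $\rho$, which itself rests on the Lipschitz dependence of the two integral factors on $t$; verifying those estimates cleanly is the technically delicate part, though conceptually routine given the explicit form of $K$.
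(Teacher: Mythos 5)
Your proposal follows essentially the same route as the paper's proof: invoke Lemma \ref{lemma:ht0}(iii) to get $\alpha_{opt}=\rho$ on $\Omega(\alpha_{opt})$ near $t$, establish differentiability of $\rho$ at $t$ (which, as in the paper, rests on differentiability of $w,c$ at $t$ together with continuity of $\alpha_{opt}$ at $t$ from Lemma \ref{lemma:ht0}(i) — this, rather than the Lipschitz estimates you flag, is the precise ingredient), and then handle points in the gaps of $\Omega(\alpha_{opt})$ by the same sandwiching argument, using that $\alpha_{opt}$ is constant on each maximal gap whose endpoints lie in $\Omega(\alpha_{opt})$ and that \eqref{eq:rd1} forces gaps near $t$ to have both endpoints in $(t,t+\delta_2]$. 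The argument is correct and matches the paper's proof in both structure and key steps.
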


\begin{proof}

We will now prove (i) -- the proof of (ii) is identical.
We thus assume that $t\in \Omega(\alpha_{opt})$, that $c,w$
are differentiable at $t$, and that
\eqref{eq:cc1},\eqref{eq:rd1} hold. 

By \eqref{eq:cc1} and by~\eqref{eq:reg}, see Lemma \ref{lemma:ht0}(iii), we have
\begin{equation}\label{eq:ko}s\in \Omega(\alpha_{opt})\cap [t,t+\delta_0)\quad\Rightarrow\quad\alpha_{opt}(s)=\rho(s),\end{equation}
where
\begin{equation}\label{eq:rho1}
\begin{split}\rho(s)=&\frac{1}{2}\ln\left(\frac{w(s)}{c(s)}\right)\\&+\frac{1}{2}\ln\left( \frac{\int_0^s c(r) e^{\alpha_{opt}(r)+\delta r}dr+e^{-[\bar{\eta}+\delta ]T}\int_s^T c(r) e^{\alpha_{opt}(r)+\delta r}dr}{\int_s^T  w(r)e^{-\alpha_{opt}(r)-\delta r }dr+e^{-[\bar{\eta}+\delta ]T}\int_0^s w(r)e^{-\alpha_{opt}(r)-\delta r }dr}\right)-\delta s.
\end{split}
\end{equation}
We need to prove that the right-hand derivative $\alpha_{opt}^\prime(t+)$ exists.
Indeed we will prove that
\begin{equation}\label{eq:sh}
\alpha_{opt}^\prime(t+)=\rho'(t).
\end{equation}
Note that the assumption that
$w,c$ are differentiable at $t$
implies that the first term in 
\eqref{eq:rho1} is differentiable at $t$,
 and 
the fact that, by Lemma \ref{lemma:ht0}(i),
 $\alpha_{opt}$ is continuous at $t$, implies that the second term is differentiable at $t$. Hence $\rho$ 
is differentiable at $t$.
Thus, fixing $\epsilon>0$, we can find
$\delta_1\in (0,\delta_0)$ (where $\delta_0$ is the constant in \eqref{eq:cc1})  such that
\begin{equation}\label{eq:dc}s\in (t,t+\delta_1)\quad\Rightarrow\quad 
\left|\frac{\rho(s)-\rho(t)}{s-t}-\rho'(t) \right|<\epsilon.\end{equation}
By \eqref{eq:rd1} we can choose $\delta_2\in (0,\delta_1)$ so that $t+\delta_2\in \Omega(\alpha_{opt})$.

We will now show that
\begin{equation}\label{eq:dcon}s\in (t,t+\delta_2]\quad\Rightarrow\quad \left|\frac{\alpha_{opt}(s)-\alpha_{opt}(t)}{s-t}-\rho'(t) \right|<\epsilon.\end{equation}
We consider two cases:

(1) If $s\in (t,t+\delta_2]\cap \Omega(\alpha_{opt})$ then, by
 \eqref{eq:ko},\eqref{eq:dc},
\begin{equation}\label{eq:der}
\left|\frac{\alpha_{opt}(s)-\alpha_{opt}(t)}{s-t}-\rho'(t) \right|=\left|\frac{\rho(s)-\rho(t)}{s-t}-\rho'(t) \right|<\epsilon.
\end{equation}

(2) Assume that $s\in (t,t+\delta_2)$ and 
$s\not\in \Omega(\alpha_{opt})$. Let
$(s_1,s_2)$ be the largest open interval containing $s$ ($s\in (s_1,s_2)$) and contained in the open set $\Real\setminus \Omega(\alpha_{opt})$.
This implies $s_1,s_2\in \Omega(\alpha_{opt})$.
Since $t+\delta_2\in \Omega(\alpha_{opt})$, we must have $s_2\leq t+\delta_2$. 
Since \eqref{eq:rd1} implies 
that the interval $(t,s)$ intersects $\Omega(\alpha_{opt})$, we must have $s_1>t$.
We thus have
$s_1,s_2\in (t,t+\delta_2]$, which implies that 
$\alpha_{opt}$ is continuous at $s_1,s_2$.
Since $(s_1,s_2)\subset \Real\setminus \Omega(\alpha_{opt})$, 
$\alpha_{opt}$ is constant on $(s_1,s_2)$. 
Therefore
$$\alpha_{opt}(s_1)=\alpha_{opt}(s)=\alpha_{opt}(s_2)\qquad\forall s\in (s_1,s_2).$$
We, therefore, conclude that
\begin{equation}\label{eq:lc1}\frac{\alpha_{opt}(s)-\alpha_{opt}(t)}{s-t}\leq
\frac{\alpha_{opt}(s)-\alpha_{opt}(t)}{s_1-t}=\frac{\alpha_{opt}(s_1)-\alpha_{opt}(t)}{s_1-t},
\end{equation}
\begin{equation}\label{eq:lc2}
\frac{\alpha_{opt}(s)-\alpha_{opt}(t)}{s-t}\geq 
\frac{\alpha_{opt}(s)-\alpha_{opt}(t)}{s_2-t}=\frac{\alpha_{opt}(s_2)-\alpha_{opt}(t)}{s_2-t}.
\end{equation}
Since $s_1,s_2\in (t,t+\delta_2]\cap \Omega(\alpha_{opt})$, \eqref{eq:der} implies
\begin{equation}\label{eq:dd1}\frac{\alpha_{opt}(s_1)-\alpha_{opt}(t)}{s_1-t}<\rho'(t)+\epsilon,\end{equation}
\begin{equation}\label{eq:dd2}\frac{\alpha_{opt}(s_2)-\alpha_{opt}(t)}{s_2-t}>\rho'(t)-\epsilon.
\end{equation}
Using \eqref{eq:lc1} and \eqref{eq:dd1} we have
$$\frac{\alpha_{opt}(s)-\alpha_{opt}(t)}{s-t}< \rho'(t)+\epsilon,$$
and using \eqref{eq:lc2} and \eqref{eq:dd2} we have
$$\frac{\alpha_{opt}(s)-\alpha_{opt}(t)}{s-t}>\rho'(t)-\epsilon.$$
We have, thus, shown that \eqref{eq:dcon} holds,
and since this is true for any $\epsilon>0$, we conclude that
$$\alpha_{opt}(t+)=\rho'(t).$$

From (i), and (ii) we have that if all the conditions \eqref{eq:cc1}, \eqref{eq:rd1}, \eqref{eq:cc2} and
\eqref{eq:rd2} hold, then 
$$\alpha_{opt}(t+)=\alpha_{opt}(t-)=
\rho'(t),$$ 
hence $\alpha_{opt}$ is differentiable at $t$.
\end{proof}

\subsection{Regularity of the optimizer: absolute continuity}

\label{sec:absolute}

We now cite a result from advanced real analysis which will be used to prove the absolute continuity of $\alpha_{opt}$ under appropriate conditions (\cite{Jeffery}, Theorem 6.6, \cite{Kannan}, Theorem 7.1.37).

\begin{lemma}\label{lemma:j}
	If a function $F(t)$ is continuous on an 
	interval $[a,b]$, and if $F'(t)$ exists and is finite, except for a countable set of points, and $F'\in L^1[a,b]$, then $F(t)$ is absolutely continuous.
\end{lemma}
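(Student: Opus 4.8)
The plan is to prove the stronger conclusion that $F(x)-F(a)=\int_a^x F'(t)\,dt$ for all $x\in[a,b]$; since the right-hand side defines an absolutely continuous function, this immediately yields absolute continuity of $F$. The whole argument rests on a single comparison estimate, which I would isolate as a lemma: \emph{if $F$ is continuous on $[a,b]$, $g\in L^1[a,b]$, $E\subset[a,b]$ is countable, and $F'(x)\ge g(x)$ at every $x\in[a,b]\setminus E$, then $F(y)-F(a)\ge\int_a^y g(t)\,dt$ for every $y$.} Granting this, I would apply it once with $g=F'$ (setting $g=0$ on the null set $E$, which does not change the integral) to obtain $F(y)-F(a)\ge\int_a^y F'$, and once to $-F$ with $g=-F'$ to obtain the reverse inequality; together these give the Newton--Leibniz identity and hence absolute continuity. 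It is \emph{not} legitimate to reduce instead to the classical ``zero derivative implies constant'' statement by subtracting $\int_a^x F'$, because by Lebesgue differentiation that integral has derivative $F'(x)$ only almost everywhere, and an uncountable null exceptional set is fatal here (the Cantor function). This is exactly why the comparison must be run against a genuine $L^1$ function rather than against a derivative.

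To prove the comparison lemma I would combine a semicontinuous approximation with a connectedness argument. Fix $\epsilon>0$; by the Vitali--Carath\'eodory theorem choose an upper semicontinuous minorant $u\le g$, bounded above, with $\int_a^b u\ge\int_a^b g-\epsilon$ and, after subtracting a tiny constant, $u(x)<F'(x)$ at every $x\in[a,b]\setminus E$. Enumerate $E=\{e_1,e_2,\dots\}$ and introduce the bounded, nondecreasing, left-continuous jump function $\sigma(x)=\sum_{e_n<x}2^{-n}$. For $\eta>0$ set
\begin{equation*}
G(x)=\bigl(F(x)-F(a)\bigr)-\int_a^x u(t)\,dt+\eta(x-a)+\eta\,\sigma(x),
\end{equation*}
and I would show $G(x)\ge0$ on all of $[a,b]$. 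Evaluating at $x=b$ and letting first $\eta\to0$ and then $\epsilon\to0$ produces $F(b)-F(a)\ge\int_a^b g$; the same run on each subinterval $[a,y]$ gives the full statement.

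The core is the proof that $G\ge0$, carried out by a supremum/connectedness argument on $x^\star=\sup\{x:\ G\ge0\text{ on }[a,x]\}$. At a point $x_0\notin E$, the differentiability of $F$, the strict inequality $u(x_0)<F'(x_0)$, and the upper semicontinuity of $u$ combine to make $G$ strictly increasing immediately to the right of $x_0$ (the term $\eta\sigma$ only helps, being nondecreasing), which rules out such an $x_0$ being an obstruction. The delicate point — and the one place the \emph{countability} of $E$ is used — is passing through an exceptional point $x^\star=e_k$: there $F$ need not be differentiable, but by construction $\sigma$ jumps upward by $2^{-k}$ as one crosses $e_k$, so $G(t)-G(x^\star)\to\eta\,2^{-k}>0$ as $t\to x^\star+$, again forcing $G\ge0$ just past $x^\star$, while left-continuity of $\sigma$ gives $G(x^\star)\ge0$ as a limit from the left. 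The summability $\sum2^{-n}<\infty$ keeps $\eta\sigma$ bounded, so these buffering jumps cost nothing in the final estimate; this is precisely the mechanism that fails for an uncountable exceptional set.

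I expect the main obstacle to be exactly this treatment of the countable exceptional set in the connectedness argument, together with the correct bookkeeping of one-sided limits and of the direction of the semicontinuous approximation (an upper semicontinuous minorant for the lower bound, and its mirror image for the upper bound). An alternative route is Banach--Zarecki: $F$ is continuous, and it satisfies Luzin's property (N) since it maps null subsets of $[a,b]\setminus E$ to null sets by differentiability while $F(E)$ is countable hence null; one would then still need bounded variation. I would avoid this route, since establishing bounded variation from $F'\in L^1$ alone appears to require essentially the same Newton--Leibniz estimate and so offers no genuine shortcut.
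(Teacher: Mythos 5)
Your proof is correct, but there is nothing in the paper to compare it against: the paper does not prove this lemma at all, it quotes it as a known result of advanced real analysis (\cite{Jeffery}, Theorem 6.6; \cite{Kannan}, Theorem 7.1.37) and uses it as a black box in the proof of Lemma \ref{lemma:diff2}. What you have written is a self-contained proof of that cited fact, and it is essentially the standard textbook argument (the one behind Rudin's theorem on everywhere-differentiable functions with $L^1$ derivative, extended to a countable exceptional set by the jump-function device): establish the Newton--Leibniz identity $F(x)-F(a)=\int_a^x F'(t)\,dt$ by comparing $F$ against a Vitali--Carath\'eodory upper semicontinuous minorant of the comparison function, run a supremum/connectedness argument, and absorb the exceptional set with the summable jump function $\eta\,\sigma$. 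The details hold up: left-continuity of $\sigma$ gives $G(x^\star)\ge 0$ at the supremum; at a non-exceptional point the strict inequality $u(x^\star)<F'(x^\star)$ together with upper semicontinuity of $u$ forces $G(t)>G(x^\star)$ for $t$ just to the right; at an exceptional point $e_k$ the jump $\eta\,2^{-k}$ does the same; and since $\sigma\le 1$ the buffer vanishes as $\eta\to 0$, so applying the comparison estimate to both $F$ and $-F$ yields the identity, whose right-hand side is manifestly absolutely continuous. Your two side remarks are also on target: a.e.\ differentiability alone would be fatal (Cantor function) --- which is exactly why the paper labors in Subsection \ref{sec:differentiability} to prove differentiability of $\alpha_{opt}$ off a countable set before invoking this lemma --- and Banach--Zarecki offers no shortcut, since bounded variation would have to be extracted from essentially the same estimate.
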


We will also use the following basic topological result. The sets $R_+$ ($R_-$) in this result are the sets of points which are {\it{not}} limit points from the right (left) of the set $C$.
\begin{lemma}\label{lemma:top}
Let $C\subset \Real$ be a closed set. Then the sets
$$R_+=\{t\in C\;|\; t\not\in \overline{C \cap (t,\infty)}\},\quad R_-=\{t\in C\;|\; t\not\in \overline{C \cap (-\infty,t)}\}$$
are countable.
\end{lemma}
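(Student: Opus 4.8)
The plan is to exploit the classical fact that any family of pairwise disjoint nonempty open intervals in $\Real$ is countable, by attaching to each point of $R_+$ a disjoint open interval lying immediately to its right and avoiding $C$. First I would unpack the definition: by construction, $t\in R_+$ means $t\in C$ but $t\notin \overline{C\cap(t,\infty)}$, i.e.\ $t$ is not a limit point of $C$ from the right. Since $t$ is not a limit point from the right, there exists $\epsilon_t>0$ such that the open interval $I_t\doteq (t,t+\epsilon_t)$ satisfies $I_t\cap C=\emptyset$. This assignment $t\mapsto I_t$ is the key device.

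The next step is to verify that the intervals $\{I_t\}_{t\in R_+}$ are pairwise disjoint. Suppose $t_1,t_2\in R_+$ with $t_1<t_2$. Since $t_2\in C$ while $I_{t_1}\cap C=\emptyset$, the point $t_2$ cannot lie in $I_{t_1}$, which forces $t_2\ge t_1+\epsilon_{t_1}$; hence the right endpoint of $I_{t_1}$ does not exceed the left endpoint $t_2$ of $I_{t_2}$, so $I_{t_1}\cap I_{t_2}=\emptyset$. Because distinct points of $R_+$ yield intervals with distinct left endpoints, the map $t\mapsto I_t$ is injective, and we have produced an injective family of pairwise disjoint nonempty open intervals indexed by $R_+$.

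Finally, choosing a rational number $q_t\in I_t$ for each $t\in R_+$ gives, by the disjointness just established, an injection $t\mapsto q_t$ from $R_+$ into $\Real$ with distinct rational values, hence an injection $R_+\hookrightarrow \Integer$-indexable set $\mathbb{Q}$; this shows $R_+$ is countable. For $R_-$ I would run the identical argument using left neighborhoods, or simply apply the result for $R_+$ to the reflected closed set $-C=\{-t\;|\;t\in C\}$, under which $R_-$ for $C$ corresponds exactly to $R_+$ for $-C$.

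I do not anticipate a genuine obstacle here; the only point requiring a little care is the disjointness verification, where one must use membership $t_2\in C$ together with $I_{t_1}\cap C=\emptyset$ to separate the intervals, rather than merely comparing the chosen $\epsilon$'s. Everything else reduces to the standard countability of a disjoint family of open intervals via a rational selection.
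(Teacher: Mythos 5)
Your proof is correct. It differs from the paper's argument in its mechanism, though both rest on the same underlying fact (separability of $\Real$). The paper's proof is a two-line appeal to the structure theorem for open subsets of the line: $\Real\setminus C$ is a countable disjoint union of open intervals, and every point of $R_+$ (resp.\ $R_-$) must be a left (resp.\ right) endpoint of one of these component intervals, of which there are only countably many. You instead bypass the structure theorem entirely: you attach to each $t\in R_+$ a gap interval $I_t=(t,t+\epsilon_t)$ disjoint from $C$, prove pairwise disjointness of the family $\{I_t\}$ directly (correctly using $t_2\in C$ and $I_{t_1}\cap C=\emptyset$ to force $t_2\ge t_1+\epsilon_{t_1}$, which is indeed the step needing care), and then inject $R_+$ into $\mathbb{Q}$ by rational selection. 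What each approach buys: the paper's version is shorter but cites a nontrivial classical theorem; yours is self-contained, using only the density of the rationals, and is essentially the proof of that classical theorem specialized to the situation at hand. Your reduction of $R_-$ to $R_+$ via the reflection $C\mapsto -C$ is also a clean touch that the paper does not need, since its endpoint argument handles both sets symmetrically at once.
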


\begin{proof}
$\Real\setminus C$ is an open set, hence it can be uniquely written as
the union of a countable number of open intervals.
If 
$t\in R_+$ or $t\in R_-$ then it must be an end point of one of these intervals, and since the number of such endpoints is countable, the result follows.
\end{proof}

Using Lemmas \ref{lemma:diff},\ref{lemma:j},\ref{lemma:top} we obtain

\begin{lemma}\label{lemma:diff2}
	Let $U$ be an open set, and
	assume $c,w$ are continuous and positive on $U$, and differentiable at every point of $U$ except for a countable set.
	 Then:
	 
	 (i) The derivative $\alpha_{opt}^\prime(t)$ exists
	at all points $t\in U$ except for a countable set, and we have, at points of differentiability,
\begin{equation}\label{eq:lu}\alpha_{opt}^\prime(t)=\begin{cases}
		\rho'(t) & t\in U\cap \Omega(\alpha_{opt})\\
		0 & t\in U,\;t\not\in  \Omega(\alpha_{opt})
	\end{cases}
\end{equation}
where $\rho$ is given by \eqref{eq:reg}.

(ii) Assuming in addition that
$c,w$ are absolutely continuous on an interval $[a,b]\subset U$, we have that
 $\alpha_{opt}$ is absolutely continuous on $[a,b]$.
\end{lemma}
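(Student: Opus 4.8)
The plan is to derive part (i) from the pointwise differentiability statement of Lemma \ref{lemma:diff}, keeping careful track of the exceptional points to show they form a countable set, and then to obtain part (ii) by verifying the hypotheses of the real-analysis criterion in Lemma \ref{lemma:j}.

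For part (i), I would set $C=\Omega(\alpha_{opt})$, which is a closed set, and treat the two regions $U\setminus C$ and $U\cap C$ separately. On $U\setminus C$, since $C$ is closed each point has a neighborhood disjoint from $C$, so $\alpha_{opt}$ is locally constant and therefore differentiable there with $\alpha_{opt}^\prime(t)=0$; this is exactly the second branch of \eqref{eq:lu}, with no exceptional points. On $U\cap C$, the one-sided continuity conditions \eqref{eq:cc1} and \eqref{eq:cc2} hold automatically, because $U$ is open and $c,w$ are continuous and positive on $U$. Consequently the hypotheses of Lemma \ref{lemma:diff}(iii) can fail at a point of $U\cap C$ only if either (a) $t$ lies in the countable set $N$ where $c$ or $w$ is not differentiable, or (b) $t$ fails to be a two-sided limit point of $C$, i.e.\ $t\in R_+\cup R_-$ in the notation of Lemma \ref{lemma:top}, so that \eqref{eq:rd1} or \eqref{eq:rd2} fails. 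By Lemma \ref{lemma:top} the set $R_+\cup R_-$ is countable, hence the whole exceptional set $N\cup R_+\cup R_-$ is countable. At every remaining point of $U\cap C$, Lemma \ref{lemma:diff}(iii) gives differentiability with $\alpha_{opt}^\prime(t)=\rho^\prime(t)$, matching the first branch of \eqref{eq:lu}. This establishes (i).

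For part (ii), I would check the three hypotheses of Lemma \ref{lemma:j} on $[a,b]\subset U$. Continuity of $\alpha_{opt}$ on $[a,b]$ is furnished by Lemma \ref{lemma:perfect}(i). Differentiability with a finite derivative apart from a countable set is precisely part (i), the finiteness being built into the conclusion of Lemma \ref{lemma:diff}. Finally, $\alpha_{opt}^\prime\in L^1[a,b]$ follows from monotonicity: since $\alpha_{opt}$ is non-decreasing, the Lebesgue differentiation theorem for monotone functions gives $\int_a^b \alpha_{opt}^\prime(t)\,dt\le \alpha_{opt}(b)-\alpha_{opt}(a)<\infty$. With all three facts in place, Lemma \ref{lemma:j} yields that $\alpha_{opt}$ is absolutely continuous on $[a,b]$.

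The main obstacle is the bookkeeping in part (i): one must verify that \emph{every} way the hypotheses of Lemma \ref{lemma:diff}(iii) can break down is confined to a countable set. This is where the argument genuinely uses the interplay between the assumed countable non-differentiability set of the data $c,w$ and the purely topological countability furnished by Lemma \ref{lemma:top} for the one-sided limit points of the closed support $C$; note in particular that a point which is, say, a left limit point but not a right limit point of $C$ may be a genuine corner of $\alpha_{opt}$, and such points are precisely absorbed into the countable set $R_+\cup R_-$. Once this countability is secured, the remainder is routine, and part (ii) reduces to a direct application of the cited criterion.
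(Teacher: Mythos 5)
Your proof is correct. For part (i) you follow essentially the same route as the paper: points of $U$ outside $\Omega(\alpha_{opt})$ are handled by local constancy, and on $U\cap\Omega(\alpha_{opt})$ the exceptional set is controlled by combining the countable non-differentiability set of $c,w$ with the countable sets $R_{\pm}$ furnished by Lemma \ref{lemma:top}, after which Lemma \ref{lemma:diff}(iii) applies; if anything, your bookkeeping is slightly more explicit than the paper's, which absorbs the non-differentiability points of the data into the exceptional set without comment. Part (ii) is where you genuinely diverge. The paper obtains the hypothesis $\alpha_{opt}'\in L^1[a,b]$ of Lemma \ref{lemma:j} by differentiating the representation \eqref{eq:rho1} to get the formula \eqref{eq:drho} for $\rho'$, and then uses the absolute continuity and positivity of $c,w$ on $[a,b]$ to conclude $\rho'\in L^1[a,b]$, hence $\alpha_{opt}'\in L^1[a,b]$ via \eqref{eq:lu}. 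You instead get integrability of the derivative for free from monotonicity, via Lebesgue's theorem for monotone functions:
$$\int_a^b \alpha_{opt}'(t)\,dt\leq \alpha_{opt}(b)-\alpha_{opt}(a)<\infty.$$
Both arguments are valid, but yours is more elementary and, notably, never invokes the hypothesis that $c,w$ are absolutely continuous on $[a,b]$: continuity, positivity, and countable non-differentiability of the data already suffice, since monotonicity of $\alpha_{opt}$ supplies the $L^1$ bound that the paper extracts from the regularity of the data. In other words, your argument shows that the additional hypothesis in part (ii) is redundant for this particular conclusion (the paper presumably carries it because it is part of Assumption \ref{a:reg}, under which the lemma is applied later); what the paper's computation buys instead is an expression, \eqref{eq:drho}, that makes visible how $\rho'$ decomposes into a data term and a smooth term, but that expression is not strictly needed for the absolute continuity claim itself.
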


\begin{proof}
If $t\not\in \Omega(\alpha_{opt})$
then $\alpha_{opt}(t)$ is constant 
in a neighborhood of $t$, hence 
$\alpha_{opt}$ is differentiable at $t$, with $\alpha_{opt}^\prime(t)=0$. We thus need to consider $t\in U\cap \Omega(\alpha_{opt})$.

Since $c,w$ are  continuous on $U$, \eqref{eq:cc1},\eqref{eq:cc2} hold for all $t\in U$. We claim 
that \eqref{eq:rd1},\eqref{eq:rd2} hold for 
all $t\in \Omega(\alpha_{opt})$ except for a countable set. 
Indeed, let $R_+$ be the set of points $t\in
\Omega(\alpha_{opt})$ for which 
\eqref{eq:rd1} does 
{\it{not}} hold. By Lemma \ref{lemma:top} (with $C=\Omega(\alpha_{opt})$), $R_+$ is countable.
In precisely the same way, the set $R_-$ of points $t\in \Omega(\alpha_{opt})$ for which \eqref{eq:rd2} does not hold is countable. 

We thus have that all assumptions of Lemma \ref{lemma:diff}(iii) hold at all
points $t\in U\cap \Omega(\alpha_{opt})$ except for a countable set, implying that $\alpha_{opt}$ is differentiable at all these points, with
$\alpha_{opt}^\prime(t)=\rho'(t)$. We have thus proved (i).

To prove (ii), we 
differentiate \eqref{eq:rho1},
so that we have, for all points $t\in U$ except for a countable set,
\begin{equation}\label{eq:drho}
\begin{split}\rho^\prime(t)=& \frac{1}{2}\left[\frac{w'(t)}{w(t)}-\frac{c'(t)}{c(t)} \right]\\&+\frac{1}{2}\left[\ln\left( \frac{\int_0^t c(r) e^{\alpha_{opt}(r)+\delta r}dr+e^{-[\bar{\eta}+\delta ]T}\int_t^T c(r) e^{\alpha_{opt}(r)+\delta r}dr}{\int_t^T  w(r)e^{-\alpha_{opt}(r)-\delta r }dr+e^{-[\bar{\eta}+\delta ]T}\int_0^t w(r)e^{-\alpha_{opt}(r)-\delta r }dr}\right)\right]^\prime-\delta.
\end{split}
\end{equation}
 By the assumption that $c,w$
 are absolutely continuous and positive on $[a,b]$, we have that the first term in \eqref{eq:drho} is in
 $L^1[a,b]$, and the second term is continuous by the continuity of $\alpha_{opt}$ on $[a,b]$ (which follows from Lemma \ref{lemma:perfect}).
Therefore, $\rho'(t)\in L^1[a,b]$, which, by \eqref{eq:lu}, implies $\alpha_{opt}^\prime(t)\in L^{1}[a,b]$. Thus $\alpha_{opt}$ satisfies all assumptions of Lemma \ref{lemma:j}, hence it is absolutely continuous on $[a,b]$.
\end{proof}

Note in particular that Lemma \ref{lemma:diff2}
implies that, under the conditions of Theorem
\ref{th:main}, $\alpha_{opt}$ is absolutely continuous on {\it{every}} closed interval, as claimed in Theorem
\ref{th:main}.

\subsection{Obtaining an explicit expression for the solution}
\label{sec:explicit}

Our next aim is to derive explicit formulas for $S_{opt}(t)$, and then 
for 
 $\alpha_{opt}^\prime(t)$, when 
$\alpha_{opt}$ is absolutely continuous. An important role will be played by the following identity.

\begin{lemma}\label{lemma:kid}
	Let $\alpha \in {\cal{M}}(T,\bar{\eta})$, and let $S={\cal{S}}[\alpha]$. 
	Then, for all $t$:
	\begin{equation}\label{eq:bi}\psi[\alpha](t)+\frac{h[\alpha](t)}{c(t)}S(t)+M =\frac{w(t)}{c(t)}S(t)^2,\end{equation}
	where 
	$$M=\frac{1}{(e^{[\bar{\eta}+\delta] T}-1)(1-e^{-[\bar{\eta}+\delta] T})}\int_0^T  w(r)e^{-\alpha(r)-\delta r }dr\cdot \int_0^T c(r)e^{\alpha(r)+\delta r}dr.$$
\end{lemma}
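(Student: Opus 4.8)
For $\alpha \in \mathcal{M}(T,\bar\eta)$ with $S = \mathcal{S}[\alpha]$:
$$\psi[\alpha](t)+\frac{h[\alpha](t)}{c(t)}S(t)+M =\frac{w(t)}{c(t)}S(t)^2$$

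**Key ingredients available:**
- Equation (for $S$): $S(t) = e^{-\alpha(t)-\delta t}\int_0^T K(t,r)c(r)e^{\alpha(r)+\delta r}dr$
- The explicit form of $S$ in terms of split integrals (from spex0)
- Definition of $h[\alpha]$ (eq h)
- Definition of $\psi[\alpha]$ (eq psie)
- $K(t,r)$ has two cases depending on $r < t$ or $r \geq t$

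**Let me set up notation.** Let me define the building-block integrals:
$$A(t) = \int_0^t c(r)e^{\alpha(r)+\delta r}dr, \quad B(t) = \int_t^T c(r)e^{\alpha(r)+\delta r}dr$$
$$P(t) = \int_0^t w(r)e^{-\alpha(r)-\delta r}dr, \quad Q(t) = \int_t^T w(r)e^{-\alpha(r)-\delta r}dr$$

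Let $\beta = e^{-[\bar\eta+\delta]T}$, so $\beta < 1$, and let $E_+ = e^{\alpha(t)+\delta t}$, $E_- = e^{-\alpha(t)-\delta t}$.

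**Rewriting the three quantities:**

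From spex0:
$$S(t) = \frac{E_-}{1-\beta}[A(t) + \beta B(t)]$$

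From (h), $h[\alpha](t)$:
$$h[\alpha](t) = \frac{w(t)E_-}{1-\beta}[A(t)+\beta B(t)] - \frac{c(t)E_+}{1-\beta}[\beta P(t) + Q(t)]$$

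Notice the first term is $w(t) S(t)$! So:
$$h[\alpha](t) = w(t)S(t) - \frac{c(t)E_+}{1-\beta}[\beta P(t)+Q(t)]$$

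From (psie), $\psi[\alpha](t)$:
$$\psi[\alpha](t) = \frac{1}{1-\beta}A(t)Q(t) - \frac{\beta}{1-\beta}P(t)B(t)$$

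Wait, let me recheck. The psie formula has $\frac{1}{1-\beta}$ and $\frac{1}{e^{[\bar\eta+\delta]T}-1} = \frac{\beta}{1-\beta}$. Actually $\frac{1}{e^{[\bar\eta+\delta]T}-1} = \frac{1}{\beta^{-1}-1} = \frac{\beta}{1-\beta}$. Yes.

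So:
$$\psi[\alpha](t) = \frac{1}{1-\beta}A(t)Q(t) - \frac{\beta}{1-\beta}P(t)B(t)$$

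**The constant $M$:**
$$M = \frac{1}{(\beta^{-1}-1)(1-\beta)}\left(\int_0^T w e^{-\alpha-\delta r}dr\right)\left(\int_0^T c e^{\alpha+\delta r}dr\right)$$

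Note $(\beta^{-1}-1)(1-\beta) = \frac{(1-\beta)^2}{\beta}$. And the total integrals: $A(t)+B(t) = \int_0^T c\,\cdots = A_T$ (constant), $P(t)+Q(t) = \int_0^T w\,\cdots = P_T$ (constant). So:
$$M = \frac{\beta}{(1-\beta)^2} P_T A_T = \frac{\beta}{(1-\beta)^2}(A(t)+B(t))(P(t)+Q(t))$$

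**Strategy for the proof:** Compute $\frac{w}{c}S^2 - \frac{h}{c}S - \psi$ and show it equals $M$.

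Let me compute $\frac{w}{c}S^2 - \frac{h}{c}S$:
$$\frac{S}{c}\left(wS - h\right)$$

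From above, $wS - h = \frac{c E_+}{1-\beta}[\beta P + Q]$. So:
$$\frac{S}{c}(wS-h) = \frac{S}{c}\cdot\frac{cE_+}{1-\beta}[\beta P+Q] = \frac{S\,E_+}{1-\beta}[\beta P+Q]$$

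Now substitute $S = \frac{E_-}{1-\beta}[A+\beta B]$, and $E_- E_+ = 1$:
$$= \frac{1}{(1-\beta)^2}[A+\beta B][\beta P+Q]$$

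**So the claim becomes:**
$$\frac{1}{(1-\beta)^2}[A+\beta B][\beta P+Q] - \psi = M$$

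i.e. we need:
$$\frac{1}{(1-\beta)^2}[A+\beta B][\beta P+Q] - \left(\frac{AQ}{1-\beta} - \frac{\beta PB}{1-\beta}\right) = \frac{\beta}{(1-\beta)^2}(A+B)(P+Q)$$

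**Expand LHS.** First term:
$$\frac{1}{(1-\beta)^2}[\beta AP + AQ + \beta^2 BP + \beta BQ]$$

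Subtract $\psi$, rewriting $\psi = \frac{1}{(1-\beta)^2}[(1-\beta)AQ - \beta(1-\beta)PB]$:

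LHS numerator (over $(1-\beta)^2$):
$$\beta AP + AQ + \beta^2 BP + \beta BQ - (1-\beta)AQ + \beta(1-\beta)PB$$

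Simplify the $AQ$ terms: $AQ - (1-\beta)AQ = \beta AQ$.
Simplify the $PB$ terms: $\beta^2 BP + \beta(1-\beta)BP = \beta^2 BP + \beta BP - \beta^2 BP = \beta BP$.

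So numerator $= \beta AP + \beta AQ + \beta BP + \beta BQ = \beta(AP+AQ+BP+BQ) = \beta(A+B)(P+Q)$.

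Therefore:
$$\text{LHS} = \frac{\beta}{(1-\beta)^2}(A+B)(P+Q) = M \checkmark$$

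---

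=== PROOF PROPOSAL ===

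\begin{proof}[Proof sketch of Lemma \ref{lemma:kid}]
The plan is to express all three quantities $\psi[\alpha](t)$, $h[\alpha](t)$, and $S(t)$ in terms of a common set of auxiliary integrals, then verify the claimed identity by direct algebraic manipulation, exploiting a crucial cancellation.

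First, I would introduce the four auxiliary functions
$$A(t)=\int_0^t c(r)e^{\alpha(r)+\delta r}dr,\qquad B(t)=\int_t^T c(r)e^{\alpha(r)+\delta r}dr,$$
$$P(t)=\int_0^t w(r)e^{-\alpha(r)-\delta r}dr,\qquad Q(t)=\int_t^T w(r)e^{-\alpha(r)-\delta r}dr,$$
and write $\beta=e^{-[\bar{\eta}+\delta]T}$, noting $0<\beta<1$. The sums $A(t)+B(t)$ and $P(t)+Q(t)$ are the (constant in $t$) full-period integrals. I would record the three representations provided by the formulas in the text: from \eqref{eq:spex0},
$$S(t)=\frac{e^{-\alpha(t)-\delta t}}{1-\beta}\bigl[A(t)+\beta B(t)\bigr];$$
from \eqref{eq:h},
$$h[\alpha](t)=w(t)S(t)-\frac{c(t)e^{\alpha(t)+\delta t}}{1-\beta}\bigl[\beta P(t)+Q(t)\bigr],$$
where the decisive observation is that the \emph{first} summand of $h[\alpha]$ is exactly $w(t)S(t)$, since it shares the factor $\tfrac{e^{-\alpha(t)-\delta t}}{1-\beta}[A+\beta B]$ with $S$; and from \eqref{eq:psie}, using $\tfrac{1}{e^{[\bar{\eta}+\delta]T}-1}=\tfrac{\beta}{1-\beta}$,
$$\psi[\alpha](t)=\frac{1}{1-\beta}A(t)Q(t)-\frac{\beta}{1-\beta}P(t)B(t).$$

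Next I would compute the combination $\tfrac{w}{c}S^2-\tfrac{h}{c}S=\tfrac{S}{c}(wS-h)$. By the identity for $h[\alpha]$ above, $wS-h=\tfrac{c\,e^{\alpha+\delta t}}{1-\beta}[\beta P+Q]$, so this combination equals $\tfrac{S\,e^{\alpha+\delta t}}{1-\beta}[\beta P+Q]$; substituting the expression for $S$ and using $e^{\alpha+\delta t}e^{-\alpha-\delta t}=1$, the exponential factors cancel and we obtain the clean product
$$\frac{w(t)}{c(t)}S(t)^2-\frac{h[\alpha](t)}{c(t)}S(t)=\frac{1}{(1-\beta)^2}\bigl[A+\beta B\bigr]\bigl[\beta P+Q\bigr].$$
The identity \eqref{eq:bi} is therefore equivalent to showing that this product, minus $\psi[\alpha](t)$, equals the constant $M$.

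Finally I would expand $[A+\beta B][\beta P+Q]=\beta AP+AQ+\beta^2 BP+\beta BQ$ and subtract $(1-\beta)\psi=AQ-\beta PB$; the $AQ$ terms collapse to $\beta AQ$ and the $BP$ terms collapse to $\beta BP$, leaving numerator $\beta(AP+AQ+BP+BQ)=\beta(A+B)(P+Q)$. Dividing by $(1-\beta)^2$ gives $\tfrac{\beta}{(1-\beta)^2}(A+B)(P+Q)$, and since $A+B$ and $P+Q$ are the full-period integrals and $\tfrac{1}{(e^{[\bar{\eta}+\delta]T}-1)(1-\beta)}=\tfrac{\beta}{(1-\beta)^2}$, this is precisely $M$, completing the proof. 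The main point requiring care is the bookkeeping in this final cancellation; conceptually the only subtle step is recognizing that $w(t)S(t)$ appears verbatim inside $h[\alpha](t)$, which is what makes the product structure emerge so cleanly.
\end{proof}
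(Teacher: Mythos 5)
Your proof is correct, and it follows essentially the same route as the paper's: both hinge on recognizing that the first term of $h[\alpha](t)$ is exactly $w(t)S(t)$, so that $\frac{w}{c}S^2-\frac{h}{c}S$ reduces to the product $\frac{1}{(1-\beta)^2}[A+\beta B][\beta P+Q]$, which is then shown by direct expansion to equal $\psi[\alpha](t)+M$. The only difference is cosmetic bookkeeping (the paper first rewrites $\psi$ with all integrals over $[0,t]$ before expanding, whereas you work from \eqref{eq:psie} directly), so the two arguments are the same computation.
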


\begin{proof}
	Using \eqref{eq:psie} we have
	\begin{equation}\begin{split}\label{eq:viv0}\psi[\alpha](t)
		=&\frac{1}{1-e^{-[\bar{\eta}+\delta] T}}\int_0^T w(r)e^{- \alpha(r)-\delta r } dr\cdot \int_0^tc(r)e^{\alpha(r)+\delta r}dr\\&-\frac{1}{e^{[\bar{\eta}+\delta] T}-1}\int_0^T c(r)e^{ \alpha(r)+\delta r}dr\cdot \int_0^t w(r)e^{- \alpha(r)-\delta r }dr\\
		&-\int_0^t w(r)e^{- \alpha(r)-\delta r }dt\cdot \int_0^t c(r)e^{ \alpha(r)+\delta r}dr.
	\end{split}\end{equation}
	Using \eqref{eq:spex0}
	and \eqref{eq:h} we can write
\begin{equation}\label{eq:viv}
\begin{split}
		\frac{h[\alpha](t)}{c(t)}
		=&\frac{w(t)}{c(t)} S(t)\\-&\frac{e^{\alpha(t)+\delta t}}{1-e^{-[\bar{\eta}+\delta ]T}}\left[\int_0^T  w(r)e^{-\alpha(r)-\delta r }dr-(1-e^{-[\bar{\eta}+\delta ]T})\int_0^t w(r)e^{-\alpha(r)-\delta r }dr\right].
        \end{split}
        \end{equation}
Using \eqref{eq:spex0} and \eqref{eq:viv0}, we have
\begin{equation}
\begin{split}
&\frac{e^{\alpha(t)+\delta t}}{1-e^{-[\bar{\eta}+\delta ]T}}\left[\int_0^T  w(r)e^{-\alpha(r)-\delta r }dr-(1-e^{-[\bar{\eta}+\delta ]T})\int_0^t w(r)e^{-\alpha(r)-\delta r }dr\right]S(t)\\
=&\frac{1}{(1-e^{-[\bar{\eta}+\delta ]T})^2}\left[\int_0^T  w(r)e^{-\alpha(r)-\delta r }dr-(1-e^{-[\bar{\eta}+\delta ]T})\int_0^t w(r)e^{-\alpha(r)-\delta r }dr\right]\\\times
&\left[(1-e^{-[\bar{\eta}+\delta ]T})\int_0^t c(r) e^{\alpha(r)+\delta r}dr+e^{-[\bar{\eta}+\delta ]T}\int_0^T c(r)e^{\alpha(r)+\delta r}dr\right]\\
=&\psi[\alpha](t)+\frac{1}{(1-e^{-[\bar{\eta}+\delta ]T})(e^{[\bar{\eta}+\delta ]T}-1)}\int_0^T  w(r)e^{-\alpha(r)-\delta r }dr\cdot \int_0^T c(r)e^{\alpha(r)+\delta r}dr\\=&\psi[\alpha](t)+M,
\end{split}\nonumber
\end{equation}   
which, together with~\eqref{eq:viv} gives~\eqref{eq:bi}.
\end{proof}

As an immediate consequence
of the above identity, we obtain
\begin{lemma}\label{lemma:identity}
	Assume $u\in C(T)$ is absolutely continuous.
	Let $\alpha \in {\cal{M}}(T,\bar{\eta})$, and let $S={\cal{S}}[\alpha]$. 
	Then we have the following equality:
	$$\int_0^T \frac{w(t)}{c(t)}S(t)^2 u'(t)dt=\int_0^T h[\alpha](t)\left[u(t)+S(t)\frac{u'(t)}{c(t)}\right]dt.$$
\end{lemma}

\begin{proof}
	Multiplying \eqref{eq:bi}
	by $u'(t)$, and integrating over $[0,T]$, taking into account the peridicity of $u(t)$, we obtain
	$$\int_0^T \psi[\alpha](t)u'(t)dt+\int_0^T\frac{h[\alpha](t)}{c(t)}S(t)u'(t)dt =\int_0^T \frac{w(t)}{c(t)}S(t)^2u'(t) dt.$$
	
	Performing an integration by parts, using the facts that
	$\psi[\alpha]'=-h[\alpha]$ and 
	that $\psi[\alpha],u$ are absolutely continuous and $T$-periodic leads to
	$$\int_0^T\psi[\alpha](t)u'(t)dt=-\int_0^T\psi'[\alpha](t)u(t)dt=\int_0^T h[\alpha](t)u(t)dt,$$
	which gives the result.
\end{proof}

\begin{lemma}\label{lemma:srep}
	Under Assumption \ref{a:pc},
	There exists a constant 
	$\lambda>0$ such that, for {\it{a.e.}} $t\in \Omega(\alpha_{opt})$,
	\begin{equation}\label{eq:sss1}S_{opt}(t)=\lambda\cdot \sqrt{\frac{c(t)}{w(t)}}.\end{equation}
\end{lemma}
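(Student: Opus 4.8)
The plan is to read off the claimed formula directly from the pointwise identity of Lemma~\ref{lemma:kid}, using the two consequences of optimality already in hand: that $\psi[\alpha_{opt}]$ is constant on the support and that $h[\alpha_{opt}]$ vanishes on the support at points where the data is continuous. Lemma~\ref{lemma:kid} gives, for every $t$,
$$\psi[\alpha_{opt}](t)+\frac{h[\alpha_{opt}](t)}{c(t)}S_{opt}(t)+M=\frac{w(t)}{c(t)}S_{opt}(t)^2,$$
where $M>0$ is the constant defined there (positive because it is a product of integrals of the positive integrands $w(r)e^{-\alpha_{opt}(r)-\delta r}$ and $c(r)e^{\alpha_{opt}(r)+\delta r}$). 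The heart of the argument is that on the active set the middle term drops out and the first term becomes constant, so that $\frac{w}{c}S_{opt}^2$ is forced to be a constant, which is exactly the assertion after taking square roots.

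First I would fix $\lambda\doteq\sqrt{\psi_{max}[\alpha_{opt}]+M}$ and check $\lambda>0$. Since $\psi[\alpha_{opt}](0)=0$ by the definition \eqref{eq:psi}, we have $\psi_{max}[\alpha_{opt}]\ge 0$, and combined with $M>0$ this gives $\lambda^2=\psi_{max}[\alpha_{opt}]+M\ge M>0$. Next I would restrict to $\Omega(\alpha_{opt})\cap U$, where $U=\Real\setminus D$ is the set on which Assumption~\ref{a:pc} guarantees $c,w$ continuous and positive. For such $t$, Lemma~\ref{lemma:ht0}(ii) gives $h[\alpha_{opt}](t)=0$, while the first-order condition \eqref{eq:vm} of Lemma~\ref{lemma:bh1} gives $\psi[\alpha_{opt}](t)=\psi_{max}[\alpha_{opt}]$ since $t\in\Omega(\alpha_{opt})$. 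Substituting both into the identity collapses it to
$$\psi_{max}[\alpha_{opt}]+M=\frac{w(t)}{c(t)}S_{opt}(t)^2,$$
that is $S_{opt}(t)^2=\lambda^2\,c(t)/w(t)$. Because $S_{opt}(t)\ge 0$, taking the nonnegative square root yields $S_{opt}(t)=\lambda\sqrt{c(t)/w(t)}$ at every $t\in\Omega(\alpha_{opt})\cap U$.

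Finally I would upgrade ``on $\Omega(\alpha_{opt})\cap U$'' to ``a.e.\ on $\Omega(\alpha_{opt})$'': the exceptional set $\Omega(\alpha_{opt})\setminus U\subset D$ meets each period in at most the finite set $D\cap[0,T)$ by Assumption~\ref{a:pc}(ii), so it has Lebesgue measure zero, and the identity of $\lambda$ across different $t$ is automatic since $\lambda$ is fixed once and the sign is pinned down by $S_{opt}\ge 0$. I expect the only real subtlety to be this bookkeeping at the discontinuity set $D$---making sure Lemma~\ref{lemma:ht0}(ii), which requires continuity of $c,w$, is invoked only on $U$, and then absorbing the measure-zero remainder into the ``a.e.''\ qualifier. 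The algebraic core is the one-line reduction of Lemma~\ref{lemma:kid}'s identity once $h[\alpha_{opt}]=0$ and $\psi[\alpha_{opt}]\equiv\psi_{max}[\alpha_{opt}]$ on the support, and presents no difficulty.
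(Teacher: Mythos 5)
Your proof is correct, and it takes a genuinely more direct route than the paper's. The paper never substitutes into the identity of Lemma~\ref{lemma:kid} pointwise; instead it first integrates that identity against derivatives of test functions (this is Lemma~\ref{lemma:identity}), then takes arbitrary zero-mean $v_0\in L^1(T)$ supported on $\Omega(\alpha_{opt})$, uses \eqref{eq:vm} and $h[\alpha_{opt}]=0$ on the support to show $\int_{\Omega(\alpha_{opt})\cap[0,T]}\frac{w}{c}S_{opt}^2\,v_0\,dt=0$ for every such $v_0$, and finally runs a mean-subtraction argument to conclude that $\frac{w}{c}S_{opt}^2$ equals its average $m$ almost everywhere on the support, setting $\lambda=\sqrt{m}$. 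You use exactly the same three ingredients (Lemmas~\ref{lemma:kid}, \ref{lemma:bh1}, \ref{lemma:ht0}) but short-circuit the variational machinery: plugging $\psi[\alpha_{opt}]\equiv\psi_{max}[\alpha_{opt}]$ and $h[\alpha_{opt}]=0$ into the pointwise identity collapses it immediately. This buys two things the paper's argument does not give: the conclusion holds at \emph{every} point of $\Omega(\alpha_{opt})\cap U$ rather than only almost everywhere, and the constant is identified explicitly as $\lambda=\sqrt{\psi_{max}[\alpha_{opt}]+M}$, with positivity falling out of $\psi[\alpha_{opt}](0)=0$ and $M>0$. Your bookkeeping at the discontinuity set is also sound: $\Omega(\alpha_{opt})\setminus U\subset D$ is finite per period, hence null, and restricting to $U$ is precisely what legitimizes both the use of Lemma~\ref{lemma:ht0}(ii) and the division by $c(t)>0$ in the identity. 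The paper's detour through test functions is not needed for this lemma, though it is the kind of argument that would survive in settings where one only controls $h[\alpha_{opt}]$ up to null sets rather than pointwise on the support.
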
	

\begin{proof}
	The result is trivially true if 
	$\Omega(\alpha_{opt})$ is a set of 
	Lebesgue measure $0$, hence we 
	will assume below that 
	$|\Omega(\alpha_{opt})|>0$.
	Let $v_0\in L^1(T)$ be any function 
	satisfying
	\begin{equation}\label{eq:vp1}	t\not \in \Omega(\alpha_{opt})\quad\Rightarrow\quad
		v_0(t)=0,\end{equation}
	\begin{equation}\label{eq:vp2}
		\int_0^T v_0(t)dt=0.
	\end{equation}
	Define
	$$u(t)=\int_0^t v_0(s)ds,$$
	so that, by \eqref{eq:vp2}, $u(t)$ is $T$-periodic, so that $u\in C(T)$, and is absolutely continuous, with
	$u'=v_0$ {\it{a.e.}}.
	Using Lemma \ref{lemma:identity}, we have
	\begin{equation}\label{eq:ts1}
		\int_0^T \frac{w(t)}{c(t)}S_{opt}(t)^2v_0(t)dt=\int_0^T h[\alpha_{opt}](t)\left[u(t)+\frac{v_0(t)}{c(t)}S_{opt}(t) \right]dt.
	\end{equation}
	Using \eqref{eq:vp1}, and 
	\eqref{eq:vm}, we have
	\begin{equation}\begin{split}\label{eq:ts2}
		&\int_0^T h[\alpha_{opt}](t)u(t)dt
		=-\int_0^T \psi[\alpha_{opt}]'(t)u(t)dt
		=\int_0^T \psi[\alpha_{opt}](t)u'(t)dt
		\\&=
		\int_0^T \psi[\alpha_{opt}](t)v_0(t)dt=\int_{\Omega(\alpha_{opt})\cap [0,T]} \psi[\alpha_{opt}](t)v_0(t)dt=\int_{\Omega(\alpha_{opt})\cap[0,T]} \psi_{max}[\alpha_{opt}]v_0(t)dt
		\\&=\psi_{max}[\alpha_{opt}]\int_{\Omega(\alpha_{opt})\cap [0,T]} v_0(t)dt=\psi_{max}[\alpha_{opt}]\int_0^T v_0(t)dt=0.
	\end{split}\end{equation}
	By
	Lemma \ref{lemma:ht0}, 
	$h[\alpha_{opt}](t)=0$ for 
	$t\in \Omega(\alpha_{opt})$ at which $c,w$ are continuous, and 
	together with \eqref{eq:vp1} we conclude that 
	$$h[\alpha_{opt}](t)\cdot v_0(t)=0\quad{\mbox{for {\it{a.e.}}}}\quad t,$$
	hence
	\begin{equation}\label{eq:ts3}\int_0^T h[\alpha_{opt}](t)\frac{v_0(t)}{c(t)}S_{opt}(t)dt=0.\end{equation}
	Combining \eqref{eq:ts1},\eqref{eq:ts2},\eqref{eq:ts3}, we obtain that
	\begin{equation}\label{eq:fin}
		\int_{\Omega(\alpha_{opt})\cap[0,T]} \frac{w(t)}{c(t)}S_{opt}(t)^2v_0(t)dt=\int_0^T \frac{w(t)}{c(t)}S_{opt}(t)^2v_0(t)dt=0
	\end{equation}
	holds for all functions $v_0$ satisfying 
	\eqref{eq:vp1},\eqref{eq:vp2}.
	Defining 
	\begin{equation}    \label{eq:defm}m=\frac{1}{|\Omega(\alpha_{opt})|}\int_{\Omega(\alpha_{opt})\cap[0,T]} \frac{w(t)}{c(t)}S_{opt}(t)^2dt,\end{equation}
	we also have, for all functions $v_0$ satisfying 
	\eqref{eq:vp1},\eqref{eq:vp2},
	\begin{equation}\label{eq:fin1}
    \begin{split}
		&\int_{\Omega(\alpha_{opt})\cap [0,T]} \left[\frac{w(t)}{c(t)}S_{opt}(t)^2-m\right]v_0(t)dt\\
		&=\int_{\Omega(\alpha_{opt})\cap [0,T]} \frac{w(t)}{c(t)}S_{opt}(t)^2v_0(t)dt-m\int_{\Omega(\alpha_{opt})\cap [0,T]}v_0(t)dt=0-0=0.
        \end{split}
	\end{equation}
	Now let $v\in L^1(T)$ be an arbitrary function satisfying 
	\begin{equation}\label{eq:vp3}	t\not \in \Omega(\alpha_{opt})\quad\Rightarrow\quad
		v(t)=0,\end{equation}
	and define
	$$v_0(t)=\begin{cases}v(t)-\frac{1}{|\Omega(\alpha_{opt})|}\int_{\Omega(\alpha_{opt})\cap [0,T]} v(t)dt & t\in \Omega(\alpha_{opt})\\
	0 & t\not\in \Omega(\alpha_{opt})
	\end{cases}
	$$
	so that $v_0$ satisfies \eqref{eq:vp1},\eqref{eq:vp2}. Then, using 
	\eqref{eq:defm} and \eqref{eq:fin1} we have
    \begin{equation*}\begin{split}
		&\int_{\Omega(\alpha_{opt})\cap [0,T]} \left[\frac{w(t)}{c(t)}S_{opt}(t)^2-m\right]v(t)dt\\
		=&\int_{\Omega(\alpha_{opt})\cap [0,T]} \left[\frac{w(t)}{c(t)}S_{opt}(t)^2-m\right]v_0(t)dt\\&- \left(\frac{1}{|\Omega(\alpha_{opt})|}\int_{\Omega(\alpha_{opt})\cap [0,T]} v(t)dt \right)\int_{\Omega(\alpha_{opt})\cap[0,T]} \left[\frac{w(t)}{c(t)}S_{opt}(t)^2-m\right]dt\\
		=&0- \left(\frac{1}{|\Omega(\alpha_{opt})|}\int_{\Omega(\alpha_{opt})\cap [0,T]} v(t)dt \right)\left[\int_{\Omega(\alpha_{opt})\cap [0,T]} \frac{w(t)}{c(t)}S_{opt}(t)^2-|\Omega(\alpha_{opt})|\cdot m\right]=0.
	\end{split}\end{equation*}
	Since this holds for all $v\in L^1(T)$ 
	satisfying \eqref{eq:vp3}, we conclude that
	$$\frac{w(t)}{c(t)}S_{opt}(t)^2-m=0\qquad{\mbox{for {\it{a.e.}}  }}t\in \Omega(\alpha_{opt}).$$
	Thus, setting $\lambda=\sqrt{m}$, we have \eqref{eq:sss1} for {\it{a.e.}} $t\in \Omega(\alpha_{opt})$.
\end{proof}

\begin{lemma}\label{lemma:am}
	Under Assumption \ref{a:pc}, $\alpha_{opt}$ is differentiable at all points of $U$ apart from a countable set, is 
	absolutely continuous on every closed sub-interval of $U$ and, for {\it{a.e.}} $t\in U$,
	\begin{equation}\label{eq:ef}\alpha_{opt}^\prime(t)=\begin{cases}
			\frac{1}{\lambda}\cdot \sqrt{w(t)c(t)} -\frac{1}{2} \left[\ln \frac{c(t)}{w(t)} \right]'-\delta & t\in U\cap \Omega(\alpha_{opt})\\
			0 & t\in U,\;t\not\in \Omega(\alpha_{opt})
		\end{cases},\end{equation}
	where $\lambda$ is the constant from
	Lemma \ref{lemma:srep}.
\end{lemma}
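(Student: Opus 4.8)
The plan is to split the claim into two parts: the regularity assertions (differentiability off a countable set, absolute continuity on closed subintervals of $U$, and the dichotomy that $\alpha_{opt}^\prime$ vanishes off $\Omega(\alpha_{opt})$), and the identification of the explicit value of $\alpha_{opt}^\prime$ on $U\cap\Omega(\alpha_{opt})$. The first part is essentially immediate from Lemma \ref{lemma:diff2}: under Assumption \ref{a:pc} the functions $c,w$ are continuous and positive on the open set $U$, differentiable there apart from a countable set, and absolutely continuous on every closed subinterval of $U$, so Lemma \ref{lemma:diff2} applies and yields both the regularity and the fact that $\alpha_{opt}^\prime(t)=\rho^\prime(t)$ for $t\in U\cap\Omega(\alpha_{opt})$ and $\alpha_{opt}^\prime(t)=0$ for $t\in U\setminus\Omega(\alpha_{opt})$, where $\rho$ is given by \eqref{eq:reg}. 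It therefore remains only to show that, for {\it{a.e.}} $t\in U\cap\Omega(\alpha_{opt})$, the value $\rho^\prime(t)$ coincides with the explicit expression in \eqref{eq:ef}.

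For this I would avoid differentiating the implicit formula \eqref{eq:reg} directly, and instead exploit the differential equation satisfied by $S_{opt}$. Since $\alpha_{opt}$ is absolutely continuous on closed subintervals of $U$, writing $S_{opt}(t)=e^{-\alpha_{opt}(t)-\delta t}A(t)$ with $A(t)=\int_0^T K(t,r)c(r)e^{\alpha_{opt}(r)+\delta r}\,dr$, one checks that $A$ is absolutely continuous with $A^\prime(t)=c(t)e^{\alpha_{opt}(t)+\delta t}$ for {\it{a.e.}} $t$ (the jump of $K(\cdot,r)$ across $r=t$ contributes the factor $1$), so that the product rule gives, for {\it{a.e.}} $t\in U$,
\[
S_{opt}^\prime(t)=c(t)-\bigl(\alpha_{opt}^\prime(t)+\delta\bigr)S_{opt}(t).
\]
Since $S_{opt}(t)>0$, this can be solved as $\alpha_{opt}^\prime(t)=\frac{c(t)}{S_{opt}(t)}-\frac{S_{opt}^\prime(t)}{S_{opt}(t)}-\delta$.

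The remaining ingredient is Lemma \ref{lemma:srep}, which gives $S_{opt}(t)=\lambda\sqrt{c(t)/w(t)}$ for {\it{a.e.}} $t\in\Omega(\alpha_{opt})$. Plugging this into the previous display immediately yields $c/S_{opt}=\frac1\lambda\sqrt{wc}$, so it remains to show that the logarithmic derivative transfers as well, namely $S_{opt}^\prime/S_{opt}=\frac12\bigl[\ln(c/w)\bigr]^\prime$ at {\it{a.e.}} point of $\Omega(\alpha_{opt})\cap U$. Here I would invoke the standard real-analysis fact that two functions which are differentiable {\it{a.e.}} and agree on a measurable set $E$ have equal derivatives at {\it{a.e.}} point of $E$: at a density point $t_0$ of $E$ (which is {\it{a.e.}} point of $E$ by the Lebesgue density theorem) at which both functions are differentiable, choosing $t_n\in E$ with $t_n\to t_0$ and using that the difference vanishes along $t_n$ forces the two derivatives to coincide. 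Applying this with $E=\Omega(\alpha_{opt})\cap U$, and using that $S_{opt}$ is differentiable {\it{a.e.}} on $U$ (first part) while $t\mapsto\lambda\sqrt{c/w}$ is differentiable wherever $c,w$ are, gives $S_{opt}^\prime(t)=\frac12\bigl[\ln(c(t)/w(t))\bigr]^\prime S_{opt}(t)$ for {\it{a.e.}} $t\in\Omega(\alpha_{opt})\cap U$. Substituting both relations into the expression for $\alpha_{opt}^\prime$ produces exactly \eqref{eq:ef}.

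I expect the main obstacle to be this last step: transferring the {\it{a.e.}} pointwise identity $S_{opt}=\lambda\sqrt{c/w}$ of Lemma \ref{lemma:srep} into an identity for derivatives. One cannot differentiate an {\it{a.e.}} equality naively, and the density-point argument must be set up carefully, making essential use of the already-established {\it{a.e.}} differentiability of $S_{opt}$ on $U$ and of the differentiability of $c,w$ there. By contrast, the derivation of the differential equation for $S_{opt}$ is routine once the absolute continuity furnished by Lemma \ref{lemma:diff2} is in hand, and the final algebraic solution for $\alpha_{opt}^\prime$ is straightforward.
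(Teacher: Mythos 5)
Your proposal is correct and follows essentially the same route as the paper's proof: the regularity statements and the dichotomy come from Lemma \ref{lemma:diff2}, the differential equation $S_{opt}'(t)=c(t)-(\alpha_{opt}'(t)+\delta)S_{opt}(t)$ is obtained (a.e.\ on $U$) by differentiating the explicit formula for $S_{opt}$, one solves for $\alpha_{opt}'$, and then substitutes the identity of Lemma \ref{lemma:srep}. The only difference is that the paper performs this last substitution without comment, whereas you explicitly justify, via the Lebesgue density theorem, why the a.e.\ identity $S_{opt}=\lambda\sqrt{c/w}$ on $\Omega(\alpha_{opt})\cap U$ may be transferred to the derivatives appearing in the term $S_{opt}'/S_{opt}$; this makes rigorous a (standard but genuine) step that the paper's own proof glosses over.
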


\begin{proof}
	By 
	Lemma \ref{lemma:diff2}
	we conclude that $\alpha_{opt}$ is
	absolutely continuous on every closed subinterval of $U$. By 
	\eqref{eq:sopt0} this implies that $S_{opt}$ is absolutely continuous on every closed subinterval of $U$.
	We differentiate \eqref{eq:sopt0}, and obtain, for {\it{a.e.}} $t\in U$:
	\begin{equation*}
    \begin{split}S_{opt}'(t)=&-\frac{ e^{-\alpha_{opt}(t)-\delta t }}{1-e^{-[\bar{\eta}+\delta] T}}(\alpha_{opt}'(t)+\delta)\\\times&\left[\int_0^t  c(r)e^{\alpha_{opt}(r)+\delta r}dr+e^{-[\bar{\eta}+\delta] T}\int_t^T 
    c(r)e^{\alpha_{opt}(r)+\delta r}dr\right]+c(t)\\
	&=-(\alpha_{opt}'(t)+\delta)S_{opt}(t)+c(t),
    \end{split}
    \end{equation*}
    or
\begin{equation}\label{eq:eta1}\alpha'_{opt}(t)=\frac{c(t)}{S_{opt}(t)} -\frac{S_{opt}'(t)}{S_{opt}(t)}-\delta\qquad{\mbox{for {\it{a.e.}}} }\;t\in U.\end{equation}	
	Substituting \eqref{eq:sss1} into 
	\eqref{eq:eta1}, we obtain, for {\it{a.e.}}
	$t\in U\cap \Omega(\alpha_{opt})$,
	$$\alpha_{opt}^\prime(t)=\frac{1}{\lambda}\cdot \sqrt{w(t)c(t)} -\frac{1}{2} \left[\ln \frac{c(t)}{w(t)} \right]'-\delta,$$
	and for $t\not\in \Omega(\alpha_{opt})$ we have
	$\alpha_{opt}^\prime(t)=0$. We have thus proved \eqref{eq:ef}.
\end{proof}

\begin{proof}[Proof of Theorem \ref{th:main}]
We apply Lemma \ref{lemma:am} with
$U=\Real$. In this case we also have (setting $\Omega=\Omega(\alpha_{opt})$),
\begin{equation*}\begin{split}
\bar{\eta}&=\frac{1}{T}\int_0^T \alpha_{opt}^\prime(t)dt=\frac{1}{T}\int_{\Omega\cap[0,T]}\alpha_{opt}^\prime(t)dt
\\&=\frac{1}{T}\int_{\Omega\cap [0,T]}\left[\frac{1}{\lambda}\cdot \sqrt{w(t)c(t)} -\frac{1}{2} \left[\ln \frac{c(t)}{w(t)} \right]'\right]dt -\delta \frac{|\Omega\cap [0,T]|}{T}\\
&\Rightarrow\quad\lambda=\frac{\frac{1}{T}\int_{\Omega\cap [0,T]} \sqrt{w(t)c(t)}dt}{\bar{\eta}+\delta\frac{|\Omega\cap [0,T]|}{T}+\frac{1}{2T} \int_{\Omega\cap [0,T]}\left[\ln \frac{c(t)}{w(t)}\right]'dt}.
\end{split}\end{equation*}
We therefore have obtained \eqref{eq:etaopt}-\eqref{eq:lambda1}.
\end{proof}

\subsection{The case of a positive optimizer}
\label{sec:complete}

Under Assumption \ref{a:reg}, if it is the case that $\eta_{opt}(t)=\alpha_{opt}^\prime(t)$ is {\it{a.e.}}
positive, so that $\Omega(\alpha_{opt})=\Real$,  we get that $\eta_{opt}(t)=\eta^*(t)$ for {\it{a.e.}} $t$.
In this case, using the absolute continuity and periodicity of $w(t),c(t)$, we have
$$\frac{1}{T}\int_0^T \left[\ln\frac{c(t)}{w(t)}\right]'dt=
\ln\left(\frac{c(T)}{w(T)}\right)-\ln\left(\frac{c(0)}{w(0)}\right)=0,$$
hence \eqref{eq:lambda1} simplifies to
\begin{equation}\label{eq:hlambda}\lambda=\hat{\lambda}\doteq
\frac{1}{\bar{\eta}+\delta}\cdot \frac{1}{T}\int_0^T \sqrt{w(t)c(t)}dt,\end{equation}
and we thus obtain that
$\eta_{opt}(t)=\hat{\eta}(t)$, where $\hat{\eta}$ is given by
\eqref{eq:hateta}
and $S_{opt}(t)=\hat{S}(t)$, where
$\hat{S}$ is given by \eqref{eq:shat}.
Using this expression, we get that the optimal value of Problem \ref{prob:main}
is 
$$
	\Phi_{\min}=\Phi[\alpha_{opt}]=\int_0^T w(t)\hat{S}(t)dt = \frac{1}{\bar{\eta}+\delta}\cdot\frac{1}{T}\cdot \left(\int_0^T  \sqrt{w(t)c(t)} dt\right)^2.
$$

We have thus obtained
\begin{lemma}\label{cor:pos0}
Under Assumption \ref{a:reg},
if $\eta_{opt}(t)=\alpha_{opt}^\prime(t)$ is {\it{a.e.}} positive,
then it is given by 
\begin{equation}\label{eq:eqp}\eta_{opt}(t)= \hat{\eta}(t)\end{equation}
where $\hat{\eta}$ is explicitly given by \eqref{eq:hateta}.
\end{lemma}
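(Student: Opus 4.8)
The plan is to treat this as a corollary of Theorem~\ref{th:main} (equivalently Lemma~\ref{lemma:am}), which under Assumption~\ref{a:reg} already supplies the explicit formula $\eta_{opt}(t)=\eta^*(t)$ on the support $\Omega(\alpha_{opt})$, with $\eta^*$ given by \eqref{eq:etaopt1} and the constant $\lambda$ by \eqref{eq:lambda1}. The only genuine work is to use the extra hypothesis that $\eta_{opt}$ is a.e. positive in order to conclude that $\Omega(\alpha_{opt})=\Real$ and to simplify $\lambda$ to the explicit value $\hat\lambda$ of \eqref{eq:hlambda}; no new estimates are needed, only bookkeeping with the established formulas.

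First I would argue that positivity forces the support to be the whole line. Under Assumption~\ref{a:reg}, Theorem~\ref{th:main} guarantees that $\alpha_{opt}$ is absolutely continuous, so $\alpha_{opt}(t+\epsilon)-\alpha_{opt}(t-\epsilon)=\int_{t-\epsilon}^{t+\epsilon}\eta_{opt}(s)\,ds>0$ for every $t$ and every $\epsilon>0$, the strict inequality coming from $\eta_{opt}>0$ a.e. Hence every point lies in $\Omega(\alpha_{opt})$, so $\Omega(\alpha_{opt})=\Real$ and $\Omega(\alpha_{opt})\cap[0,T]=[0,T]$. Consequently \eqref{eq:etaopt} collapses to $\eta_{opt}=\eta^*$ a.e., and every integral in \eqref{eq:lambda1} is now taken over the full period $[0,T]$.

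The crucial simplification is the vanishing of the logarithmic-derivative term in \eqref{eq:lambda1}. By Assumption~\ref{a:reg}, $c$ and $w$ are absolutely continuous on $[0,T]$ and bounded away from zero, so $\ln(c(t)/w(t))$ is absolutely continuous; the fundamental theorem of calculus then gives $\int_0^T[\ln(c/w)]'\,dt=\ln(c(T)/w(T))-\ln(c(0)/w(0))$, which equals $0$ by the $T$-periodicity of $c$ and $w$. Inserting $\Omega\cap[0,T]=[0,T]$ together with this vanishing integral into \eqref{eq:lambda1} reduces $\lambda$ to $\hat\lambda=\frac{1}{\bar\eta+\delta}\cdot\frac1T\int_0^T\sqrt{w(t)c(t)}\,dt$, i.e. \eqref{eq:hlambda}. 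Substituting $\lambda=\hat\lambda$ into \eqref{eq:etaopt1} then reproduces the expression \eqref{eq:hateta}, giving $\eta_{opt}=\hat\eta$ as claimed.

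As for difficulty, the statement is essentially a corollary and contains no hard step, since all the analytical heavy lifting resides in Theorem~\ref{th:main}. The one point deserving care is the justification that $\ln(c/w)$ is absolutely continuous, so that the fundamental theorem of calculus applies and the cancellation $\int_0^T[\ln(c/w)]'\,dt=0$ is valid; this is precisely where the positivity in Assumption~\ref{a:reg}(iii) enters, since without it the logarithm would be undefined and the cancellation could fail.
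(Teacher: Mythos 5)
Your proposal is correct and follows essentially the same route as the paper: invoke Theorem~\ref{th:main}, observe that a.e.\ positivity of $\eta_{opt}$ forces $\Omega(\alpha_{opt})=\Real$, and use absolute continuity and periodicity of $c,w$ to cancel $\int_0^T[\ln(c/w)]'\,dt$ so that \eqref{eq:lambda1} reduces to \eqref{eq:hlambda} and \eqref{eq:etaopt1} becomes \eqref{eq:hateta}. The only difference is that you spell out two steps the paper leaves implicit (the strict positivity of $\int_{t-\epsilon}^{t+\epsilon}\eta_{opt}$ giving full support, and the absolute continuity of $\ln(c/w)$), which is fine.
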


An immediate consequence of this result is that a {\it{necessary condition}}
for $\eta_{opt}$ to be {\it{a.e.}} positive is that $\hat{\eta}(t)$ be {\it{a.e.}} positive. We now show that 
{\it{a.e.}} positivity of $\hat{\eta}(t)$ is also a {\it{sufficient}} condition
for the equality \eqref{eq:eqp}.

\begin{lemma}\label{lemma:pos1}
Under Assumption \ref{a:reg}, assume that
$\hat{\eta}(t)$, given by \eqref{eq:hateta}, is {\it{a.e.}} positive. Then \eqref{eq:eqp} holds.
\end{lemma}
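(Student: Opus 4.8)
The plan is to exhibit the explicit candidate $\hat{\alpha}(t)=\int_0^t \hat{\eta}(s)\,ds$ and to verify that it satisfies the necessary and sufficient first-order condition of Lemma \ref{lemma:bh1}; by the uniqueness in Theorem \ref{th:existence} this forces $\hat{\alpha}=\alpha_{opt}$, hence $\eta_{opt}=\hat{\eta}$, which is precisely \eqref{eq:eqp}. First I would check that $\hat{\alpha}$ is an admissible competitor, i.e.\ $\hat{\alpha}\in{\cal{M}}(T,\bar{\eta})$. The hypothesis that $\hat{\eta}$ is {\it{a.e.}} positive makes $\hat{\alpha}$ strictly increasing (so monotone, with full support $\Omega(\hat{\alpha})=\Real$), and a direct integration of \eqref{eq:hateta}, using the $T$-periodicity and absolute continuity of $c,w$ to kill the total-derivative term $\int_0^T[\ln(c/w)]'\,dt=0$, gives $\hat{\alpha}(T)=\int_0^T\hat{\eta}=T\bar{\eta}$ together with the periodicity relation \eqref{eq:aper}.

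The key computational step is to identify $\mathcal{S}[\hat{\alpha}]$. I claim that $\hat{S}(t):=\hat{\lambda}\sqrt{c(t)/w(t)}$, with $\hat{\lambda}$ as in \eqref{eq:hlambda}, equals $\mathcal{S}[\hat{\alpha}]$. Since $\hat{S}$ is $T$-periodic and \eqref{eq:spex1} provides the {\it{unique}} periodic solution, it suffices to verify that $\hat{S}$ solves the defining ODE $S'=c-(\hat{\eta}+\delta)S$. Writing $\hat{\eta}+\delta=\frac{1}{\hat{\lambda}}\sqrt{wc}-\frac{1}{2}[\ln(c/w)]'$ and differentiating $\hat{S}=\hat{\lambda}\sqrt{c/w}$, both sides reduce to $\frac{\hat{\lambda}}{2}[\ln(c/w)]'\sqrt{c/w}$, so the ODE holds. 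Consequently $\mathcal{S}[\hat{\alpha}]=\hat{S}$ and, crucially, $\frac{w}{c}\,\mathcal{S}[\hat{\alpha}]^2\equiv\hat{\lambda}^2$ is constant.

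With this in hand I would invoke the identity of Lemma \ref{lemma:kid} at $\alpha=\hat{\alpha}$. Since its right-hand side $\frac{w}{c}\hat{S}^2$ is the constant $\hat{\lambda}^2$, and $h[\hat{\alpha}]=-\psi[\hat{\alpha}]'$, the identity becomes a first-order linear ODE for $P:=\psi[\hat{\alpha}]$, namely $P-\frac{\hat{\lambda}}{\sqrt{cw}}P'=\hat{\lambda}^2-M$. Setting $Q=P-(\hat{\lambda}^2-M)$ gives $Q'=\frac{\sqrt{cw}}{\hat{\lambda}}Q$, whose only $T$-periodic solution is $Q\equiv 0$: a nonzero solution equals $Q(0)\exp\big(\tfrac{1}{\hat{\lambda}}\int_0^t\sqrt{cw}\big)$, which over one period is multiplied by $\exp\big(\tfrac{1}{\hat{\lambda}}\int_0^T\sqrt{cw}\big)>1$ and so cannot be periodic. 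Because $\psi[\hat{\alpha}]$ is $T$-periodic by Lemma \ref{lemma:hprop}, we conclude $\psi[\hat{\alpha}]\equiv\hat{\lambda}^2-M$ is constant. The first-order condition \eqref{eq:vm} then holds trivially (the maximum of $\psi[\hat{\alpha}]$ is attained everywhere), so Lemma \ref{lemma:bh1} yields $\hat{\alpha}=\alpha_{opt}$, and therefore \eqref{eq:eqp}.

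The step I expect to require the most care is the passage from the Lemma \ref{lemma:kid} identity to the constancy of $\psi[\hat{\alpha}]$: one must recognize the relation as a linear ODE and then use periodicity (rather than any boundary data) to exclude the exponentially growing homogeneous solutions. By contrast, the verification $\mathcal{S}[\hat{\alpha}]=\hat{S}$ is a routine differentiation, and the admissibility of $\hat{\alpha}$ rests only on the positivity hypothesis and the vanishing of the total-derivative term.
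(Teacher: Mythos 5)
Your proof is correct, and while it shares the paper's outer frame, its central verification goes by a genuinely different route. Both arguments set $\hat{\alpha}(t)=\int_0^t\hat{\eta}(s)\,ds$, observe that $\Omega(\hat{\alpha})=\Real$ by the a.e.\ positivity of $\hat{\eta}$, reduce the claim to showing that $\psi[\hat{\alpha}]$ is constant (so that the first-order condition \eqref{eq:vm} holds trivially), and then conclude $\hat{\alpha}=\alpha_{opt}$ from Lemma \ref{lemma:bh1}. The difference is in how constancy of $\psi[\hat{\alpha}]$ is obtained. The paper does it by brute force: it writes $e^{\hat{\alpha}(t)+\delta t}$ in closed form, notices that $c(r)e^{\hat{\alpha}(r)+\delta r}$ and $w(r)e^{-\hat{\alpha}(r)-\delta r}$ are then perfect derivatives, evaluates both integrals explicitly, and substitutes into \eqref{eq:h} to get $h[\hat{\alpha}]\equiv 0$, hence $\psi[\hat{\alpha}]\equiv 0$. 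You avoid all explicit integration: you identify ${\cal{S}}[\hat{\alpha}]=\hat{\lambda}\sqrt{c/w}$ by checking the ODE $S'=c-(\hat{\eta}+\delta)S$ and invoking uniqueness of the periodic solution, then feed the resulting constancy of $\frac{w}{c}{\cal{S}}[\hat{\alpha}]^2$ into the identity of Lemma \ref{lemma:kid}, read that identity as a linear ODE for $\psi[\hat{\alpha}]$, and use $T$-periodicity (Lemma \ref{lemma:hprop}) to exclude the exponentially growing homogeneous solutions. Your route is structurally more illuminating — it makes visible \emph{why} $\hat{\eta}$ is the right candidate (it is exactly the control that renders $wS^2/c$ constant, in line with \eqref{eq:cc}) — at the price of invoking two more ingredients (Lemma \ref{lemma:kid} and the ODE characterization of ${\cal{S}}$), whereas the paper's computation is self-contained. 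Two small points you should make explicit to be fully rigorous: first, ${\cal{S}}$ is defined by the integral formula \eqref{eq:spex1}, so your uniqueness step needs the paper's remark that for $\eta\in L^1_+(T)$ (which $\hat{\eta}$ is, since $[\ln(c/w)]'\in L^1[0,T]$ under Assumption \ref{a:reg}) the formula \eqref{eq:spex1} coincides with \eqref{eq:spex}, the unique periodic solution of \eqref{eq:s0}; second, when you differentiate $P=\psi[\hat{\alpha}]$ you should note that $h[\hat{\alpha}]$ is continuous here (because $c,w,\hat{\alpha}$ are), so $\psi[\hat{\alpha}]$ is $C^1$ with $\psi[\hat{\alpha}]'=-h[\hat{\alpha}]$ everywhere — or, in the merely a.e.\ setting, run the integrating-factor argument $\bigl(Q(t)e^{-\frac{1}{\hat{\lambda}}\int_0^t\sqrt{cw}}\bigr)'=0$ a.e.\ for the absolutely continuous function $Q$, which still yields $Q$ constant.
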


\begin{proof}
Let 
$$\hat{\alpha}(t)=\int_0^t \hat{\eta}(s)ds.$$
We want to prove that $\alpha_{opt}=\hat{\alpha}$.
To do this we will use Lemma 
\ref{lemma:bh1}
and show that \eqref{eq:vm} holds for $\alpha=\hat{\alpha}$.
Since, by our assumption that ${\hat{\eta}}$ is {\it{a.e.}} positive, we have $\Omega(\hat{\alpha})=\Real$,
 to show \eqref{eq:vm} holds we need to show that
$\psi[\hat{\alpha}](t)$ is a constant function.
A direct computation gives
\begin{equation}
\begin{split}
\hat{\alpha}(t)&=\int_0^t \hat{\eta}(s)ds\\&=\int_0^t \left[
\frac{\bar{\eta}+\delta}{ \frac{1}{T}\int_0^T \sqrt{w(s)c(s)}ds}\cdot \sqrt{w(s)c(s)}+\frac{1}{2}\cdot \left[\ln\left(\frac{w(s)}{c(s)}\right)\right]'-\delta \right]ds\\&=\frac{\bar{\eta}+\delta}{\frac{1}{T}\int_0^T \sqrt{w(s)c(s)}ds}\cdot\int_0^t 
 \sqrt{w(s)c(s)}ds+\frac{1}{2}\ln\left(\frac{c(0)w(t)}{w(0)c(t)}\right)-\delta t\\
 &\Rightarrow \quad e^{\hat{\alpha}(t)+\delta t}=
 \sqrt{\frac{c(0)w(t)}{w(0)c(t)}}e^{\frac{\bar{\eta}+\delta}{\frac{1}{T}\int_0^T \sqrt{w(s)c(s)}ds}\cdot\int_0^t 
 	\sqrt{w(s)c(s)}ds}
  \\&\Rightarrow\,\int_0^t c(r)e^{\hat{\alpha}(r)+\delta r}dr=
\sqrt{\frac{c(0)}{w(0)}}\int_0^t\sqrt{w(r)c(r)}\cdot e^{\frac{\bar{\eta}+\delta}{\frac{1}{T}\int_0^T \sqrt{w(s)c(s)}ds}\cdot\int_0^r 
	\sqrt{w(s)c(s)}ds}dr\\
&=\frac{\frac{1}{T}\int_0^T \sqrt{w(s)c(s)}ds}{\bar{\eta}+\delta}\cdot\sqrt{\frac{c(0)}{w(0)}}\left(e^{\frac{\bar{\eta}+\delta}{\frac{1}{T}\int_0^T \sqrt{w(s)c(s)}ds}\cdot\int_0^t
	\sqrt{w(s)c(s)}ds}-1\right),
    \end{split}
\end{equation}
and similarly we obtain
\begin{equation}\begin{split}&\int_0^t w(r)e^{-\alpha_{opt}(r)-\delta r }dr\\=&\frac{\frac{1}{T}\int_0^T \sqrt{w(s)c(s)}ds}{\bar{\eta}+\delta}\sqrt{\frac{w(0)}{c(0)}}\left(1-e^{-\frac{\bar{\eta}+\delta}{ \frac{1}{T}\int_0^T \sqrt{w(s)c(s)}ds}\cdot\int_0^t 
	\sqrt{w(s)c(s)}ds}\right).
    \end{split}\end{equation}
Substituting these results into \eqref{eq:h}, we obtain
$$h[\hat{\alpha}](t)=0,\quad\forall t,$$
which, by \eqref{eq:psi}, implies that
$$\psi[\hat{\alpha}](t)=0,\quad\forall t,$$
so we have shown that $\psi[\hat{\alpha}]$ is constant, as needed.
\end{proof}

We now note that, by \eqref{eq:hateta}, the condition that $\hat{\eta}$ is everywhere
positive is equivalent to:
$\bar{\eta}>\bar{\eta}_m$, where
$\bar{\eta}_m$ is given by
\eqref{eq:etam}.
The results of 
Lemma \ref{cor:pos0} and Lemma \ref{lemma:pos1} thus imply 
Theorem \ref{th:pos}.

%

\subsection{The limit $\bar\eta\rightarrow 0$}
\label{sec:e0}

We now use the first order condition for the optimizer in order to prove Theorem \ref{prop:eta0} of Subsection \ref{sec:lh}.

\begin{proof}[Proof of Theorem \ref{prop:eta0}]
	Fix $\epsilon>0$.
	Assume by way of contradiction that $\{\bar{\eta}_k\}$ is a positive sequence with
	\begin{equation}\label{eq:l0}
	\lim_{k\rightarrow \infty} \bar{\eta}_k=0, 
	\end{equation}
	but with $\Omega(\alpha_{opt,k})\not\subset M_{\epsilon}$, where $\alpha_{opt,k}$ is the solution of Problem \ref{prob:main}
	corresponding to $\bar{\eta}_k$.
	Then for each $k$ we can choose $t_k\in [0,T)$ so that $t_k\in \Omega(\alpha_{opt,k})$ but $t_k\not\in M_{\epsilon}$, that is $f(t_k)\leq f_{\max}-\epsilon$. Since the sequence 
	$\{t_k\}$ is bounded, we can assume, by 
	taking a subsequence, that $\{t_k\}$ converges, and denote its limit by $t^*$. By continuity of $f$, we have
	\begin{equation}\label{eq:pl0}
		f(t^*)\leq f_{\max}-\epsilon.
	\end{equation}
Using \eqref{eq:psie} we have
	\begin{equation}\begin{split}\label{eq:pq0}
		(1-e^{-[\bar{\eta}+\delta]T})\psi[\alpha_{opt,k}](t)=&\int_0^{t} c(r)e^{\alpha_{opt,k}(r)+\delta r}dr\int_{t}^T w(r)e^{- \alpha_{opt,k}(r)-\delta r } dr\\-&e^{-[\bar{\eta}_k+\delta ]T}\int_0^{t} w(r)e^{- \alpha_{opt,k}(r)-\delta r }dr\int_{t}^T c(r)e^{ \alpha_{opt,k}(r)+\delta r}dr.\end{split}\end{equation}
	Since
	$$t\in [0,T]\Rightarrow 0\leq \alpha_{opt,k}(t)\leq T\bar{\eta}_k$$
	\eqref{eq:l0} implies 
	$\alpha_{opt,k}(t)\rightarrow 0$ uniformly on $[0,T]$ as $k\rightarrow \infty$. By \eqref{eq:pq0}, this implies that
	\begin{equation}\label{eq:pq3}\lim_{k\rightarrow \infty}(1-e^{-[\bar{\eta}+\delta]T})\psi[\alpha_{opt,k}](t)=f(t)\quad{\mbox{uniformly on}}\; [0,T],
	\end{equation}
    where $f(t)$ is defined by 
    \eqref{eq:deff}.
	which in turn implies that
	\begin{equation}\label{eq:pq2}\lim_{k\rightarrow \infty}(1-e^{-[\bar{\eta}+\delta]T})\psi_{\max}[\alpha_{opt,k}]=f_{max}.\end{equation}
	Since $t_k\in \Omega(\alpha_{opt,k})$, we have,
	by Lemma \ref{lemma:bh1},
	$$\psi[\alpha_{opt,k}](t_k)=\psi_{\max}[\alpha_{opt,k}],$$
	hence, using, \eqref{eq:pq2},
	\begin{equation}\label{eq:pq4}\lim_{k\rightarrow \infty}(1-e^{-[\bar{\eta}+\delta]T})\psi[\alpha_{opt,k}](t_k)=f_{max}.\end{equation}
	Writing,
	\begin{equation}\label{eq:pq5}
		f(t^*)=[f(t^*)-f(t_k)]+[f(t_k)-(1-e^{-[\bar{\eta}+\delta]T})\psi[\alpha_{opt,k}](t_k)]+(1-e^{-[\bar{\eta}+\delta]T})\psi[\alpha_{opt,k}](t_k),\end{equation}
	we have that the first term on the right-hand side goes to $0$ as $k\rightarrow \infty$ by continuity of $f$, the second term goes to $0$ by
	\eqref{eq:pq3}, and the third term goes to $f_{max}$ by \eqref{eq:pq4}.
	Hence, taking the limit $k\rightarrow \infty$ in \eqref{eq:pq5} leads to
	$$f(t^*)=f_{max},$$
	but this contradicts \eqref{eq:pl0}, 
	and this contradiction concludes the proof.
\end{proof}

\subsection{Discontinuities in 
the data}
\label{sec:jumps}

%

Theorem \ref{th:main} has shown that when the functions 
$c,w$ are absolutely continuous the optimizer is an
absolutely continuous measure. 
Theorem \ref{th:discont} examines the 
 the case in which 
$c$ or $w$ has discontinuity points,
and shows that these can lead to atomic components in the optimizing measure.

\begin{proof}[Proof of theorem \ref{th:discont}]

(i) The explicit expression \eqref{eq:ef1}, for $t\in U$,
follows from Lemma~\ref{lemma:am}.

(ii) Assume now that $t\in D_+$, that is
\begin{equation}\label{eq:ju}\frac{w(t+)}{c(t+)}<\frac{w(t-)}{c(t-)}.
\end{equation}
To prove that $t\not\in  \Omega(\alpha_{opt})$, assume by way of contradiction that
$t\in \Omega(\alpha_{opt})$. Then,
by Lemma \ref{lemma:htp} we have
$h[\alpha_{opt}](t+)\geq 0$, $h[\alpha_{opt}](t-)\leq 0$, which, written explicitly, give
\begin{equation*}\label{eq:ineq1}e^{ 2\alpha_{opt}(t+)}\leq e^{-2\delta t}\cdot  \frac{w(t+)}{c(t+)}\cdot \frac{\int_0^T K(t,r) c(r) e^{\alpha_{opt}(r)+\delta r}dr}{\int_0^T K(r,t)w(r)e^{-\alpha_{opt}(r)-\delta r } dr},\end{equation*}
\begin{equation*}\label{eq:ineq2}e^{ 2\alpha_{opt}(t-)}\geq e^{-2\delta t}\cdot \frac{w(t-)}{c(t-)}\cdot \frac{\int_0^T K(t,r) c(r) e^{\alpha_{opt}(r)+\delta r}dr}{\int_0^T K(r,t)w(r)e^{-\alpha_{opt}(r)-\delta r } dr}.\end{equation*}
Together with \eqref{eq:ju}, these imply
$$e^{2\alpha_{opt}(t+)}< e^{2\alpha_{opt}(t-)},$$
so that $\alpha_{opt}(t+)<\alpha_{opt}(t-)$. But this contradicts the fact that $\alpha_{opt}$ is monotone 
non-decreasing. This contradiction proves that 
$t\not\in \Omega(\alpha_{opt})$.

(iii) Assume that $t\in D_-$, that is
	\begin{equation}\label{eq:jd}\frac{w(t+)}{c(t+)}>\frac{w(t-)}{c(t-)},\end{equation} 
and also that the condition \eqref{eq:ts} holds.
This condition implies that there exist sequences $\{t^+_k\},\{t^-_k\}$ 
of points with $t^-_k<t<t^+_k$ and $\lim_{k\rightarrow \infty} t^{\pm}_k=t$, so that $t^{\pm}_k\in \Omega(\alpha_{opt})$ and $c,w$ are continuous at $t^{\pm}_k$. 
Hence, applying Lemma \ref{lemma:ht0}, we conclude that
$$\alpha_{opt}(t^{\pm}_k)= \frac{1}{2}\ln\left(\frac{w(t^{\pm}_k)}{c(t^{\pm}_k)}\cdot \frac{\int_0^T K(t^{\pm}_k,r) c(r) e^{\alpha_{opt}(r)+\delta r}dr}{\int_0^T K(r,t^{\pm}_k)w(r)e^{-\alpha_{opt}(r)-\delta r } dr}\right)-\delta t^{\pm}_k,$$
and taking the limit $k\rightarrow\infty$ 
leads to 
$$\alpha_{opt}(t\pm)= \frac{1}{2}\ln\left(\frac{w(t\pm)}{c(t\pm)}\cdot \frac{\int_0^T K(t,r) c(r) e^{\alpha_{opt}(r)+\delta r}dr}{\int_0^T K(r,t)w(r)e^{-\alpha_{opt}(r)-\delta r } dr}\right)-\delta t.$$
These equalities imply 
$$\alpha_{opt}(t+)-\alpha_{opt}(t-)
=\frac{1}{2}\ln\left(\frac{w(t+)}{c(t+)}\frac{c(t-)}{w(t-)} \right)=
\ln\left(\frac{\varphi(t-)}{\varphi(t+)} \right)>0,$$
so that we have \eqref{eq:js}.

Finally, \eqref{eq:sjump} follows from
\eqref{eq:js}, using \eqref{eq:sopt0}.
\end{proof}

We now prove Proposition \ref{prop:sd}, which gives a sufficient condition under which, for $\bar{\eta}$ sufficiently small, the measure $\mu_{opt}$ is purely atomic.

\begin{proof}[Proof of Proposition \ref{prop:sd}]
By invariance with respect to translations, it suffices to prove the result assuming $t^*=T$, so that we must prove, under the assumption that 
$w(t)\equiv 1$ and $c(t)$ is strictly monotone increasing on $(0,T)$, that $\alpha_{opt}=\alpha_0$, where
$$\alpha_{0}(t)=\begin{cases}
	0 & 0\leq  t<   T\\
	\bar{\eta}T & t=T.
\end{cases}$$
We will show that, under our assumptions, and when 
$\bar{\eta}>0$ is sufficiently small, $\alpha_0$ satisfies the condition \eqref{eq:vm}, so that Lemma \ref{lemma:bh1} will imply $\alpha_{opt}=\alpha_0$.
Since $\Omega(\alpha_0)=\{kT\;|\; k\in \Integer\}$, it suffices to show that 
the $T$-periodic function $\psi[\alpha_0]$ 
given by \eqref{eq:psie} 
attains its maximum
at $t=0$. 
We have
\begin{equation*}
    \begin{split}
        \psi[\alpha_0](t)=&\frac{1}{1-e^{-[\bar{\eta}+\delta]T}}\cdot \frac{1}{\delta}\cdot (e^{-\delta t}-e^{-\delta T})\int_0^t c(r)e^{\delta r}dr\\&-\frac{e^{-[\bar{\eta}+\delta]T}}{1-e^{-[\bar{\eta}+\delta]T}}\cdot  \frac{1}{\delta}\cdot (1-e^{-\delta t})\int_t^T c(r)e^{\delta r}dr,
    \end{split}
\end{equation*}
the condition that $\psi[\alpha_0](t)<\psi[\alpha_0](0)=0$ for 
$t\in (0,T)$, is equivalent to
\begin{equation}\label{eq:cx}\bar{\eta}<\frac{1}{T}\ln\left(\frac{(e^{\delta t}-1)\int_t^T c(r)e^{\delta r}dr}{(e^{\delta T}-e^{\delta t})\int_0^t c(r)e^{\delta r}dr}\right).\end{equation}
We therefore need to show that 
the right-hand side of \eqref{eq:cx} is strictly positive for $t\in (0,T)$, that is 
\begin{equation}\label{eq:cx1}
\frac{1}{e^{\delta T}-e^{\delta t}}\int_t^T c(r)e^{\delta r}dr>\frac{1}{e^{\delta t}-1}\int_0^t c(r)e^{\delta r}dr,
\end{equation}
By the integral mean value theorem, we have 
\begin{equation}\label{eq:cx2}\frac{1}{e^{\delta t}-1}\int_0^t c(r)e^{\delta r}dr=\frac{\int_0^t c(r)e^{\delta r}dr}{\int_0^t e^{\delta r}dr}=c(t_1),
\end{equation}
where $t_1\in (0,t)$, 
and
\begin{equation}\label{eq:cx3}\frac{1}{e^{\delta T}-e^{\delta t}}\int_t^T c(r)e^{\delta r}dr=\frac{\int_t^T c(r)e^{\delta r}dr}{\int_t^T e^{\delta r}dr}=c(t_2),
\end{equation}
where $t_2\in (t,T)$.
By the monotonicity of $c(t)$, we have
$c(t_2)>c(t_1)$, so that \eqref{eq:cx2},\eqref{eq:cx3} imply 
\eqref{eq:cx1}, completing the proof.
\end{proof}

\section{Discussion}
\label{sec:discussion}

This work has been devoted to the study of a 
`simple' optimal control problem, in which 
the aim is to regulate the long-time behavior of
a periodic system to achieve optimal performance, under a constraint on the total effort expended per period. In the context of this problem, it is 
natural to allow the controls to be arbitrary positive measures
$\mu$, where $\mu([t_1,t_2))$ 
is interpreted as the amount of effort expended during the time-interval $[t_1,t_2)$.
This entails that the problem is beyond the scope of the standard optimal control-theory 
framework which assumes that controls are bounded functions.

Throughout the study, we have focused on an example of optimal clearing of a pollutant flowing to a system at a periodic rate.  In Appendix~\ref{app:alternativeInterpretation} we provide an alternative interpretation to the same mathematical problem. 
Another interpretation of the same problem is given in \cite{ali2021maximizing}, involving the maximization of average throughput in a periodic system with a `bottleneck' entrance. An additional application of the same problem involves the question of preventing the spread of an infectious disease with a
seasonally-varying transmission rate, by periodic vaccination - this application will be detailed in a future work. The variety of possible interpretations and applications of our problem shows that it is a quite fundamental one, motivating our quest
to gain a detailed understanding of the
structure of its solution.

Our analyses, as well as the numerical results, have enabled us to understand key features of the optimizing measure which is the solution to our problem.
We have proved that
\begin{itemize}
	\item When the data (the functions $c(t)$,$w(t)$) are absolutely continuous, and differentiable apart from a countable set, the optimizing measure is absolutely continuous, and an explicit formula is given for its density, on its support.
	
	\item Under the above assumptions on $c(t),w(t)$, when the cumulative allowable effort per period ($\bar{\eta}$)
	 is sufficiently large, the support of the optimizing measure is the entire real line, which means that some effort is made at all times. 
	 However, for smaller values of
	 $\bar{\eta}$ there can exist
	 time-intervals which are outside this support, meaning that no effort is made during these periods, and as $\bar{\eta}\rightarrow 0$ the measure of the support goes to $0$.
\end{itemize}

We have focused, in particular, 
on the case in which either $c(t),w(t)$
have points of discontinuity, and have shown that.
\begin{itemize}
	 \item Discontinuity points
	 of either $c(t)$ or $w(t)$ can give rise to atomic ($\delta$-function) components
	 of the optimizing measure. However, this can occur only at those points at which the function  $c(t)/w(t)$ has 
	 a {\it{downward}} jump. In this case the optimizing measure can consist of purely atomic components or of a mixture 
	 of a continuous and atomic components. 
	 
	 \item Points at which $c(t)/w(t)$ has an
	 upward jump are always {\it{outside}} the support of the optimizing measure.
\end{itemize}

Thus, in terms of the interpretation 
of the problem as that of determining an optimal clearing effort for a pollutant, and assuming $w$ is constant, we have that when the pollution inflow rate increases abruptly at some point in time, one should not apply any effort in 
a time interval including this point.
On the other hand when the pollution 
inflow rate decreases abruptly, then it
is sometimes advisable to apply a concentrated effort at this time. 

While our results provide considerable understanding of the structure of the optimizing measure, there are several aspects,
mentioned throughout the text, in which our results are incomplete. In particular, 
while our results provide an explicit expression for the absolutely continuous component of the optimizing measure on its support $\Omega(\alpha_{opt})$, we do not have a complete and explicit characterization of the
set $\Omega(\alpha_{opt})$. Therefore we have also used numerical computation to gain insights.

Some phenomena observed through the numerical computations are not completely explained by the analytical results. Let us mention some observations which still await mathematical proof:
In our numerical computations, we examined the effect of the constraint on the cumulative effort, determined by the value $\bar{\eta}$, on the solution of Problem \ref{prob:main}.
Denoting this solution, in dependence on $\bar{\eta}$, by $\alpha_{opt,\bar{\eta}}$, our numerical results suggest the following monotonicity properties:

\begin{itemize}
\item[(1)] The set $\Omega(\alpha_{opt,\bar{\eta}})$ 
is monotone increasing with respect to 
$\bar{\eta}$: 
$$\bar{\eta}_1<\bar{\eta}_2\quad\Rightarrow\quad\Omega(\alpha_{opt,\bar{\eta}_1})\subset \Omega(\alpha_{opt,\bar{\eta}_2}).$$

\item[(2)] When $\alpha_{opt,\bar{\eta}}$ is absolutely continuous for all $\bar{\eta}$ (e.g., under Assumption \ref{a:reg}), the corresponding density is monotone with respect to $\bar{\eta}$:
$$\bar{\eta}_1<\bar{\eta}_2\quad\Rightarrow\quad\alpha_{opt,\bar{\eta}_1}'(t) \leq \alpha_{opt,\bar{\eta}_2}'(t),\qquad\forall t.$$

\item[(3)] The mass of atomic components of the measure are (weakly)
monotone with respect to $\bar{\eta}$.
If $\bar{\eta}_1<\bar{\eta}_2$ and if 
$\alpha_{opt,\bar{\eta}_1}$ has a 
discontinuity at $t^*$, then so does 
$\alpha_{opt,\bar{\eta}_2}$, and
$$\alpha_{opt,\bar{\eta}_2}(t^*+)-\alpha_{opt,\bar{\eta}_2}(t^*-)\geq \alpha_{opt,\bar{\eta}_1}(t^*+)-\alpha_{opt,\bar{\eta}_1}(t^*-).$$
\end{itemize}

In fact all these observations follow from the following monotonicity conjecture:

\begin{conjecture}\label{conj:1}
	If  $0<\bar{\eta}_1<\bar{\eta}_2$, then
	the measure $\mu_{\alpha_{opt},\bar{\eta}_2}$ dominates the measure $\mu_{\alpha_{opt,\bar{\eta}_1}}$ in the sense that, for any $t<s$,
	$\mu_{\alpha_{opt,\bar{\eta}_1}}([t,s))\leq \mu_{\alpha_{opt,\bar{\eta}_2}}([t,s))$, or, equivalently
	$$\alpha_{opt,\bar{\eta}_1}(s)-\alpha_{opt,\bar{\eta}_1}(t)<\alpha_{opt,\bar{\eta}_2}(s)-\alpha_{opt,\bar{\eta}_2}(t).$$
\end{conjecture}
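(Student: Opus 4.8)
The first step is to restate the conclusion in a convenient form. Writing $\alpha_i=\alpha_{opt,\bar{\eta}_i}$ and $\mu_i=\mu_{\alpha_i}$, the asserted inequality $\mu_1([t,s))\le\mu_2([t,s))$ for all $t<s$ is equivalent to the statement that $g\doteq\alpha_2-\alpha_1$ is monotone non-decreasing, i.e. that the signed measure $\mu_2-\mu_1$ is in fact non-negative: $\mu_2\ge\mu_1$. Thus the conjecture is a monotone-comparative-statics statement for the parametric convex program of Problem \ref{prob:main}, and the natural strategy is a lattice/rearrangement argument exploiting the convexity of Lemma \ref{lemma:convexity}.

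The backbone of my plan is a submodularity property of $\Phi$. For $\alpha,\tilde{\alpha}\in{\cal{M}}(T)$ let $\alpha\wedge\tilde{\alpha}$ and $\alpha\vee\tilde{\alpha}$ denote the pointwise minimum and maximum on $[0,T]$; these are again right-continuous and non-decreasing, and when $\alpha\in{\cal{M}}(T,\bar{\eta}_1)$, $\tilde{\alpha}\in{\cal{M}}(T,\bar{\eta}_2)$ with $\bar{\eta}_1<\bar{\eta}_2$ the endpoint values give $\alpha\wedge\tilde{\alpha}\in{\cal{M}}(T,\bar{\eta}_1)$ and $\alpha\vee\tilde{\alpha}\in{\cal{M}}(T,\bar{\eta}_2)$. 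The key observation is that the integrand of \eqref{eq:dPhi}, namely $G_{t,r}(x,y)=K(t,r)w(t)c(r)e^{\,y-x+\delta(r-t)}$ with $x=\alpha(t),y=\alpha(r)$, satisfies $\partial^2 G_{t,r}/\partial x\partial y=-G_{t,r}<0$, so it is submodular on $\mathbb{R}^2$. Since any submodular $G$ obeys $G(a\wedge a',b\wedge b')+G(a\vee a',b\vee b')\le G(a,b)+G(a',b')$, integrating this pointwise inequality over $(t,r)\in[0,T]^2$ yields
\[
\Phi[\alpha\wedge\tilde{\alpha}]+\Phi[\alpha\vee\tilde{\alpha}]\le\Phi[\alpha]+\Phi[\tilde{\alpha}].
\]
Applying this with $\alpha=\alpha_1,\tilde{\alpha}=\alpha_2$ and combining it with the optimality inequalities $\Phi[\alpha_1]\le\Phi[\alpha_1\wedge\alpha_2]$ and $\Phi[\alpha_2]\le\Phi[\alpha_1\vee\alpha_2]$ forces equality throughout; the uniqueness of Theorem \ref{th:existence} then gives $\alpha_1\wedge\alpha_2=\alpha_1$, that is the pointwise (cumulative) dominance $\alpha_1(t)\le\alpha_2(t)$ for all $t$.

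This already establishes a weaker cousin of the conjecture, but pointwise dominance of the cumulative profiles is strictly weaker than interval dominance, which requires $g=\alpha_2-\alpha_1$ to be monotone rather than merely non-negative. To upgrade, I would add two ingredients. First, the value function $\bar{\eta}\mapsto\Phi_{\min}(\bar{\eta})$ is convex (use $(1-\theta)\alpha_1+\theta\alpha_2$ as a competitor for the intermediate budget), so its one-sided derivatives — the multipliers attached to the constraint $\alpha(T)=T\bar{\eta}$ — are monotone in $\bar{\eta}$; via Lemma \ref{lemma:srep} and the relation $S_{opt}=\lambda\sqrt{c/w}$ on the support this should yield the monotonicity $\lambda_{\bar{\eta}_1}\ge\lambda_{\bar{\eta}_2}$ of the constant in \eqref{eq:etaopt1}. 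Second, one must establish the support-nesting $\Omega(\alpha_1)\subseteq\Omega(\alpha_2)$ (observation (1) of Section \ref{sec:discussion}). Granting these, the explicit density formula \eqref{eq:ef1} gives, on the common support, $\alpha_2'-\alpha_1'=(\lambda_{\bar{\eta}_1}^{-1}-\lambda_{\bar{\eta}_2}^{-1})\sqrt{wc}\ge0$, while on $\Omega(\alpha_2)\setminus\Omega(\alpha_1)$ one has $\alpha_2'\ge0=\alpha_1'$ and off $\Omega(\alpha_2)$ both densities vanish; together with the atomic analysis of Theorem \ref{th:discont} for the singular parts this yields $\mu_2\ge\mu_1$ under Assumption \ref{a:reg}, the general case following by approximation.

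The main obstacle is the support-nesting step, and, more fundamentally, the mismatch between the two orderings. The clean lattice argument delivers only cumulative dominance because the operations adapted to the interval (measure) order — the measure-theoretic infimum and supremum $\mu_1\wedge\mu_2,\ \mu_1\vee\mu_2$ — fail to preserve the total-mass constraints (their masses are $\le T\bar{\eta}_1$ and $\ge T\bar{\eta}_2$), so they are not admissible competitors and submodularity cannot be invoked for them. Proving $\Omega(\alpha_1)\subseteq\Omega(\alpha_2)$ amounts to showing that a time at which effort is expended under a tight budget stays active when the budget is relaxed; the natural route is a deformation argument, differentiating the first-order condition of Lemma \ref{lemma:bh1} in $\bar{\eta}$ and showing $\tfrac{d}{d\bar{\eta}}\mu_{\bar{\eta}}\ge0$. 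This is delicate precisely because the support $\Omega(\alpha_{opt,\bar{\eta}})$ can change discontinuously with $\bar{\eta}$ (as the examples of Section \ref{sec:disc} show), so the optimizer need not be differentiable in $\bar{\eta}$ and the argument would have to be carried out in an integrated, one-sided form.
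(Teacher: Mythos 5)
The paper contains no proof of this statement: it is Conjecture \ref{conj:1}, which the authors explicitly leave as an open question in Section \ref{sec:discussion} (``we have not been able to prove it, and leave its proof as an open question''). Your attempt must therefore stand on its own, and its first half does hold up. Writing $\alpha_i=\alpha_{opt,\bar{\eta}_i}$, and defining $\alpha_1\wedge\alpha_2$, $\alpha_1\vee\alpha_2$ pointwise on $[0,T]$ and extending via \eqref{eq:aper}, these indeed lie in ${\cal{M}}(T,\bar{\eta}_1)$ and ${\cal{M}}(T,\bar{\eta}_2)$ respectively (monotonicity, right-continuity and the endpoint values all survive the lattice operations). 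The integrand of \eqref{eq:dPhi} depends on $\alpha$ only through $e^{\alpha(r)-\alpha(t)}$ times a non-negative factor, and the mixed second partial of $(x,y)\mapsto e^{y-x}$ is $-e^{y-x}<0$, so integrating the pointwise submodularity inequality gives $\Phi[\alpha_1\wedge\alpha_2]+\Phi[\alpha_1\vee\alpha_2]\leq\Phi[\alpha_1]+\Phi[\alpha_2]$. Adding the two optimality inequalities forces equality throughout, and the uniqueness in Theorem \ref{th:existence} then yields $\alpha_1\wedge\alpha_2=\alpha_1$, i.e.\ $\alpha_{opt,\bar{\eta}_1}(t)\leq\alpha_{opt,\bar{\eta}_2}(t)$ for all $t$. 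This Topkis-type argument is correct and constitutes a genuine partial result that the paper itself does not contain.

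However---and you acknowledge this---pointwise dominance of the cumulative profiles is strictly weaker than the conjecture, which demands that $\alpha_{opt,\bar{\eta}_2}-\alpha_{opt,\bar{\eta}_1}$ be non-decreasing, i.e.\ dominance at the level of measures. The upgrade you outline is not a proof: it hinges on the support nesting $\Omega(\alpha_{opt,\bar{\eta}_1})\subseteq\Omega(\alpha_{opt,\bar{\eta}_2})$ and on the multiplier monotonicity $\lambda_{\bar{\eta}_1}\geq\lambda_{\bar{\eta}_2}$, neither of which you establish. Moreover, support nesting is precisely observation (1) of Section \ref{sec:discussion}, which the authors list as a \emph{consequence} of the conjecture; invoking it would be circular unless proved independently, and your own closing discussion explains why the natural deformation-in-$\bar{\eta}$ argument is delicate (the support and the atoms can change non-smoothly in $\bar{\eta}$; in the example of Figure \ref{fig:6} an atom is born at a strictly positive value of $\bar{\eta}$). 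There are further loose ends: the multiplier monotonicity is only asserted (``should yield''); on the common support, \eqref{eq:ef1} gives $\alpha_{opt,\bar{\eta}_2}'-\alpha_{opt,\bar{\eta}_1}'=(\lambda_{\bar{\eta}_2}^{-1}-\lambda_{\bar{\eta}_1}^{-1})\sqrt{w(t)c(t)}$, not the opposite-signed expression you wrote; the comparison of atomic components under Assumption \ref{a:pc} is never carried out; and ``the general case following by approximation'' presupposes a stability theorem for optimizers under perturbation of the data $c,w$, which neither you nor the paper provides. In short, you have correctly proved the pointwise-dominance corollary of the conjecture, but Conjecture \ref{conj:1} itself remains open.
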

This means that when the allowed cumulative effort is 
increased, effort made at {\it{any}} point in time can only increase.
While this conjecture is supported by all of our numerical observations, we  have not been able to prove it, and leave its proof as an open question.

Concentrating on a simple model problem has allowed us to employ specific analytical tools which might not easily extend to more general problems - in particular, 
the ability to solve the differential equation \eqref{eq:s0}
in the explicit form \eqref{eq:spex}
has been exploited. It is of great interest to extend our investigation to more elaborate 
problems. This includes studying problems involving optimal regulation of phenomena described by systems of linear periodic differential equations, as well as those involving nonlinear differential equations, with the similar aim of characterizing the behavior of optimizers. The extent to which the ideas employed here can be extended to treating other problems, or the alternative ideas which need to be developed,  should be studied in future work. The investigation of the particular `model problem' performed in this work offers us a window through which we can glimpse some phenomena which should also occur in a more general context, and it will be of interest to learn what additional phenomena appear when the problem is generalized.

\section*{Acknowledgments}
This research was supported by the Israel Science Foundation (grant no. 3730/20) within the KillCorona-Curbing Coronavirus Research Program, and by the Israel Science Foundation (ISF) grant 1596/23.
 \appendix
 \section{An additional interpretation of the optimal control problem}\label{app:alternativeInterpretation}
 The interpretation of the problem introduced in Section \ref{sec:intrducing}, as a problem of optimal clearing 
 of a pollutant flowing into the system at a periodic rate, is only one possible interpretation. Let us propose another setting in which the same optimization problem arises. Assume now that $S(t)$ represents the amount of a resource in the environment. This resource flows into the environment at a given periodically dependent rate $c(t)$ (with period $T$), degrades at a first-order rate $\delta 
 S$, and is extracted by us at the rate $\eta(t)S$, where $\eta(t)$ is our extraction effort, which also depends $T$-periodically on time. Then the amount of resource $S$ satisfies the differential equation \eqref{eq:s0}, and in the long term the solution approaches the periodic solution $S(t)$.
 Thus, in the long term, the amount 
 of resource extracted per period, which we wish to maximize, under the constraint 
 \eqref{eq:ce} on the time-averaged effort,
 is 
 $$\int_0^T \eta(t)S(t)dt.$$
 By integrating \eqref{eq:s0} over 
 a period, and using the periodicity 
 of $S(t)$, we obtain
 $$\int_0^T \eta(t)S(t)dt=\int_0^T c(t)dt-\delta \int_0^T S(t)dt.$$
 Our aim is thus equivalent to 
 minimizing $\int_0^T S(t)dt$, which is identical to the problem introduced in Section \ref{sec:intrducing}, with
 $w(t)\equiv 1$.
 
 \section{Numerical methods}
 \label{sec:numerical}
 We numerically approximate the solution of Problem \ref{prob:main} employing Matlab's {\tt fmincon} function for optimization with constraints
 \begin{lstlisting}[style=Matlab-editor]
 	%% Employ minimization with constraints to find alpha
 	[alpha,Phi]=fmincon(@(x)computePhi(x,delta,t,c,w),alpha0,A,b,[],[],lb,ub,[],optimset('Algorithm','sqp'));
 \end{lstlisting}
 Our approach involves selecting a discrete grid over the interval~$[0,T]$ as follows:
 \[
 0=t_0<t_1<t_2<\cdots<t_K=T
 \]
 with corresponding values~$\alpha_k=\alpha(t_k)$ such that:
 \[
 \alpha_0=0,\quad \alpha_K=T\eta\bar.
 \]
 The grid does not need to be uniform.  By default, we use a uniform grid with some concentration of points around the discontinuities of~$c(t),w(t)$.
 To enforce the monotonicity of~$\alpha$, we apply the linear constraints
 \[
 \alpha_{k-1}\le\alpha_{k},\quad k=1,2,\cdots,K.
 \] We do so by properly setting the parameters of the {\tt fmincon} parameters parameters~${\tt A}$ and~${\tt b}$.
 Our aim is to minimize~$\Phi[\alpha]$ as defined by \eqref{eq:dPhi}.  To do so, we employ Matlab's {\tt cumtrapz} function, which uses the composite trapezoidal rule for non-uniform grids, for the numerical integration required to compute~$\Phi[\alpha]$:
 \begin{lstlisting}[style=Matlab-editor]
 	%% Compute Phi[alpha]
 	function Phi=computePhi(alpha,delta,t,c,w)
 	
 	aux=cumtrapz(t,c.*exp(alpha+delta*t));
 	caux=(aux(end)-aux)*exp(-alpha(end)-delta*t(end));
 	S=exp(-alpha-delta*t).*(aux+caux)/(1-exp(-alpha(end)-delta*t(end)));
 	
 	Phi=trapz(t,w.*S);
 	
 	end
 \end{lstlisting}
 
 Within {\tt fmincon}, we select the option of the optimization algorithm {\tt sqp} because the optimizer may lie on the boundary of the feasible set, i.e., for some~$k$, $\alpha_k=\alpha_{k-1}$.  In this case, the default interior point algorithm of {\tt fmincon} is less effective. The supplementary material includes the full Matlab code for solving the optimization problem and for creating the graphs in this paper.

\end{document}